\newtheorem{Theorem}{Theorem}[section]
\newtheorem{Corollary}[Theorem]{Corollary}
\newtheorem{Lemma}[Theorem]{Lemma}
\newtheorem{Proposition}[Theorem]{Proposition}
\theoremstyle{definition}
\newtheorem{Definition}[Theorem]{Definition}
\newtheorem{Remark}[Theorem]{Remark}
\def \p{\partial}
\def \<{\langle}
\def \>{\rangle}
\def \tM{\widetilde{M}}
\begin{document}

\title[Invariant distributions and the geodesic ray transform]{Invariant distributions and the geodesic ray transform}

\author[G. P. Paternain]{Gabriel P. Paternain}
\address{Department of Pure Mathematics and Mathematical Statistics, University of Cambridge, Cambridge CB3 0WB, UK}
\email{g.p.paternain@dpmms.cam.ac.uk}

\author[H. Zhou]{Hanming Zhou}
\address{Department of Pure Mathematics and Mathematical Statistics, University of Cambridge, Cambridge CB3 0WB, UK}
\email{hz318@dpmms.cam.ac.uk}

\begin{abstract} We establish an equivalence principle between the solenoidal injectivity of the geodesic ray transform acting on symmetric $m$-tensors and the existence of invariant distributions or smooth first integrals with prescribed projection
over the set of solenoidal $m$-tensors. We work with compact simple manifolds, but several of our results
apply to non-trapping manifolds with strictly convex boundary.

\end{abstract}
\maketitle


\section{Introduction}

The present paper studies the geodesic ray transform of a compact simply connected Riemannian manifold with no conjugate points and strictly convex boundary. Our main objective is to establish an equivalence principle between injectivity of the ray transform acting on solenoidal symmetric $m$-tensors and the existence of solutions to the transport equation (associated with the geodesic vector field) with prescribed projection over the set of solenoidal $m$-tensors.

The Radon transform in the plane is the most fundamental example of geodesic ray transform. It packs the integrals of a function $f$ in $\mathbb{R}^2$ over straight lines:
$$
Rf(s,\omega) = \int_{-\infty}^{\infty} f(s\omega + t\omega^{\perp}) \,dt, \quad s \in \mathbb{R}, \omega \in S^1.
$$
Here $\omega^{\perp}$ is the rotation of $\omega$ by $90$ degrees counterclockwise. The properties of this transform are well studied \cite{Hel} and constitute the theoretical underpinnings for many medical imaging methods such as CT and PET.
Generalizations of the Radon transform are often needed. In seismic and ultrasound imaging one finds ray transforms where the measurements are given by integrals over more general families of curves, often modeled as the geodesics of a Riemannian metric. Moreover, integrals of tensor fields over geodesics are ubiquitous in rigidity questions in differential geometry and dynamics. 

In this paper we will relate the injectivity properties of the geodesic ray transform with a well studied subject in classical mechanics: the existence of special first integrals of motion along geodesics. Some Riemannian metrics admit distinguished first integrals, e.g. the geodesic flow of an ellipsoid in $\mathbb{R}^3$ admits a non-trivial first integral which is quadratic in momenta. As recently shown in \cite{KM} a generic metric does not admit a non-trivial first integral that is polynomial in momenta, but here we will show a complementary statement going in the opposite direction: from the injectivity of the geodesic ray transform on tensors we will show that it is possible to construct smooth first integral with {\it any} prescribed polynomial part. 
In other words, given a polynomial $F$ of degree $m$ in momenta satisfying a natural restriction condition (related with the transport equation, see Section 7)),
we will show that we can find a smooth function $G$ whose dependence on momenta is of order $>m$ such
that $F+G$ is a first integral of the geodesic flow. Generically $G$ is non-vanishing and {\it not} polynomial in momenta.

Let us now explain our results in more detail. The geodesic ray transform acts on functions defined on the unit sphere bundle of a compact oriented $n$-dimensional Riemannian manifold $(M,g)$ with boundary $\p M$ ($n\geq 2$).
Let $SM$ denote the unit sphere bundle on $M$, i.e.
$$SM:=\{(x,\xi)\in TM : \|\xi\|_g=1\}.$$
We define the volume form on $SM$ by $d\Sigma^{2n-1}(x,\xi)=|dV^n(x)\wedge d\Omega_x(\xi)|$, where $dV^n$ is the volume form on $M$ and $d\Omega_x(\xi)$ is the volume form on the fibre $S_xM$.
The boundary of $SM$ is $\p SM:=\{(x,\xi)\in SM : x\in \p M\}$. On $\p SM$ the natural volume form is $d\Sigma^{2n-2}(x,\xi)=|dV^{n-1}(x)\wedge d\Omega_x(\xi)|$, where $dV^{n-1}$ is the volume form on $\p M$. We define two subsets of $\p SM$
$$\p_{\pm}SM:=\{(x,\xi)\in \p SM : \pm\<\xi,\nu(x)\>_g\leq 0\},$$
where $\nu(x)$ is the outward unit normal vector on $\p M$ at $x$. It is easy to see that
$$\p_+SM\cap\p_-SM=S(\p M).$$

Given $(x,\xi)\in SM$, we denote by $\gamma_{x,\xi}$ the unique geodesic with $\gamma_{x,\xi}(0)=x$ and $ \dot{\gamma}_{x,\xi}(0)=\xi$ and let $\tau(x,\xi)$ be the first time when the geodesic $\gamma_{x,\xi}$  exits $M$. 

We say that $(M,g)$ is {\it non-trapping} if $\tau(x,\xi)<\infty$ for all $(x,\xi)\in SM$.

\begin{Definition}
The \emph{geodesic ray transform} of a function $f \in C^{\infty}(SM)$ is the function 
\begin{equation*}
If(x,\xi)=\int\limits_{0}^{\tau(x,\xi)}f(\gamma_{x,\xi}(t),\dot{\gamma}_{x,\xi}(t)) \,dt,\quad
(x,\xi)\in \partial_{+} SM.
\end{equation*}
\end{Definition}

Note that if the manifold $(M,g)$ is non-trapping and has strictly convex boundary, then $I:C^{\infty}(SM)\rightarrow C^{\infty}(\partial_{+} SM)$, and Santal\'o's formula (see Section 2) implies that $I$ is also a bounded map $L^{2}(SM)\rightarrow L^{2}_{\mu}(\partial_{+}SM)$, where  $d\mu(x,\xi)=|\langle \nu(x), \xi \rangle|d\Sigma^{2n-2}(x,\xi)$ and $L^{2}_{\mu}(\partial_{+}SM)$ is the space of functions on $\partial_{+}SM$ with inner product
$$(u,v)_{L^{2}_{\mu}(\partial_{+}SM)}=\displaystyle\int_{\partial_{+}SM}u\overline{v}\,d\mu.$$


Given $f \in C^{\infty}(SM)$, what properties of $f$ may be determined from the knowledge of $If$?
Clearly a general function $f$ on $SM$ is not determined by its geodesic ray transform alone, since $f$ depends on more variables than $If$. In applications one often encounters the transform $I$ acting on special functions on $SM$ that arise from symmetric tensor fields, and we will now consider this case.

We denote by $C^{\infty}(S^m(T^*M))$ the space of smooth covariant symmetric tensor fields of rank $m$ on $M$
with $L^2$ inner product:
$$(u,v):=\int_M u_{i_1\cdots i_m}\overline{v^{i_1\cdots i_m}} \,dV^n,$$
where $v^{i_1\cdots i_m}=g^{i_1j_1}\cdots g^{i_mj_m} v_{j_1\cdots j_m}$. There is a natural 
map 
$$\ell_{m}: C^{\infty}(S^{m}(T^*M))\to C^{\infty}(SM)$$ given by $\ell_{m}(f)(x,\xi):=f_{x}(\xi,\dots,\xi)$.
We can now define the geodesic ray transform acting on symmetric $m$-tensors simply by
setting $I_{m}:=I\circ \ell_{m}$.
Let $d=\sigma\nabla$ be the symmetric inner differentiation, where $\nabla$ is the Levi-Civita connection associated with $g$, and $\sigma$ denotes symmetrization. 
It is easy to check that if $v=dp$ for some $p\in C^{\infty}(S^{m-1}(T^*M))$ with $p|_{\p M}=0$, then $I_{m}v=0$. The tensor tomography problem asks the following question: are such tensors the only obstructions for $I_{m}$ to be injective? If this is the case, then we say $I$ is {\it solenoidal injective or s-injective} for short. The problem is wide open for
compact non-trapping manifolds with strictly convex boundary (but see \cite{UV,SUV}). There are more results if one assumes the stronger condition of being {\it simple}, i.e., $(M,g)$ is simply connected, has no conjugate points and strictly convex boundary.  For simple surfaces, the tensor tomography problem has been completely solved \cite{PSU1}. For simple manifolds of any dimension, solenoidal injectivity is known for $I_{0}$ and $I_{1}$ \cite{Mu, AR}.
 For $m$-tensors, $m\geq 2$, the tensor tomography problem is still open, but some substantial partial results were established under additional assumptions, see e.g. \cite{PS, Sh1, SU1, PSU5, SUV}. 

Let us explain a bit further the term ``solenoidal injective''. Consider the  Sobolev space $H^k(S^m(T^*M))$
naturally associated with the $L^{2}$ inner product defined above.
 By \cite{Sh1, SSU}, there is an orthogonal decomposition of $L^2$ symmetric tensors fields. Given $v\in H^k(S^m(T^*M)),\, k\geq 0$, there exist uniquely determined $v^s\in H^k(S^m(T^*M))$ and $p\in H^{k+1}(S^{m-1}(T^*M))$, such that 
$$v=v^s+dp,\quad \delta v^s=0,\quad p|_{\p M}=0,$$
where $\delta$ is the divergence. We call $v^s$ and $dp$ the {\it solenoidal} part and {\it potential} part of $v$ respectively. Moreover, we denote by $H^k(S_{sol}^m(T^*M))$ ($C^{\infty}(S_{sol}^m(T^*M))$) the subspace of $H^k(S^m(T^*M))$ ($C^{\infty}(S^m(T^*M))$), whose elements are solenoidal symmetric tensor fields. Solenoidal injectivity of $I_{m}$ simply means that $I_{m}$ is injective when restricted to $C^{\infty}(S_{sol}^m(T^*M))$.

Let $I^*$ denote the adjoint of $I$ using the $L^2$ inner products defined above, that is,
$$(Iu,\varphi)=(u,I^*\varphi),$$
for $u\in L^2(SM)$, $\varphi\in L^2_{\mu}(\p_+SM)$. A simple application of Santal\'o's formula yields:
\[I^*\varphi=\varphi^{\sharp},\]
where $\varphi^{\sharp}(x,\xi):=\varphi(\gamma_{x,\xi}(-\tau(x,-\xi)),\dot{\gamma}_{x,\xi}(-\tau(x,-\xi)))$ (see Section 2 for details).
Observe that by definition $\varphi^{\sharp}$ is constant along orbits of the geodesic flow.
If we are now interested in $I^*_{m}$ we note that
\[I^*_{m}=\ell^*_{m}\circ I^*\]
and hence we just need to compute $\ell^*_{m}$. This is easy (see Section 2) and one finds
$$L_{m}f:=\ell_{m}^*f(x)_{i_1\cdots i_m}:=g_{i_1 j_1}\cdots g_{i_m j_m}\int_{S_xM}f(x,\xi)\xi^{j_1}\cdots\xi^{j_m}\,d\Omega_x(\xi).$$

The fundamental microlocal property of the geodesic ray transform is that, for simple manifolds, $I_{m}^*I_{m}$ is a pseudodifferential operator of order $-1$ on a slightly larger open manifold engulfing $M$. Moreover, it has a suitable ellipticity property when acting on solenoidal tensors \cite{SSU}. This has been exploited to great effect to derive surjectivity of $I_{m}^*$ knowing injectivity of $I_{m}$ \cite{PU, DU} for $m=0,1$. Since the range of $I_{m}^*$ is contained in the space of solenoidal tensors, by saying $I_{m}^*$ is {\it surjective} we mean that the range of $I^*_{m}$ equals the latter. Surjectivity of $I^*_{m}$ for tensors of order 0 and 1 has been the key for the recent success in the solution of several long standing questions in 2D \cite{SaU,PU,PSU1,PSU2,PSU4,G}. However, very little is known about surjectivity for $m\geq 2$ and this largely motivates the present paper.


The surjectivity properties of the adjoint of the geodesic ray transform reveal themselves in the existence of solutions $f$ to the transport equation $Xf=0$ with prescribed values for $L_{m}f$ in the space of solenoidal tensors.
Here $X$ is the geodesic vector field acting on distributions by duality (recall that $X$ preserves the volume form $d\Sigma^{2n-1}$). A distribution $f$ on $SM$ is said to be {\it invariant} if it satisfies $Xf=0$.
As we already mentioned, in this paper we mainly study the relation among the injectivity of $I_{m}$, the surjectivity of its adjoint $I_{m}^*$ on solenoidal tensor fields and the existence of some invariant distributions
or smooth first integrals associated with solenoidal tensor fields.  On a compact non-trapping manifold with strictly convex boundary, the geodesic ray transform $I_{m}$ is extendible to a bounded operator
\[I_{m}:H^{k}(S^{m}(T^*M))\to H^{k}(\partial_{+}SM)\]
for all $k\geq 0$ \cite[Theorem 4.2.1]{Sh1}. Moreover, it can be easily checked that 
$$I_{m}(H^{k}_{0}(S^{m}(T^*M)))\subset H^{k}_{0}(\partial_{+}SM)$$ and hence we can define $I_{m}^*$ by duality
acting on negative Sobolev spaces to obtain a bounded operator:
\[I^*_{m}:H^{-k}(\partial_{+}SM)\to H^{-k}(S^{m}(T^*M)).\]
In other words, for $\varphi\in H^{-k}(\partial_{+}SM)$, $I^*_{m}\varphi$ is defined
by $(I^*_{m}\varphi,u)=(\varphi,I_{m}u)$ for all $u\in H^{k}_{0}(S^{m}(T^*M))$.
Let $C_{\alpha}^{\infty}(\partial_{+}SM)$ denote the set of smooth functions $\varphi$ for which
$\varphi^{\sharp}$ is also smooth.

Our main result is the following theorem:

\begin{Theorem}\label{Main}
Let $M$ be a compact simple Riemannian manifold, then the following are equivalent:\\
(1) $I_{m}$ is s-injective on $C^{\infty}(S^m(T^*M))$;\\
(2) for every $u\in L^2(S^m_{sol}(T^*M))$, there exists $\varphi\in H^{-1}(\p_+SM)$ such that $u=I_{m}^*\varphi$.\\
(3) for every $u\in L^2(S^m_{sol}(T^*M))$, there exists $f\in H^{-1}(SM)$ satisfying $Xf=0$ and 
$u=L_{m}f.$\\
(4) for every $u\in C^{\infty}(S^{m}_{sol}(T^*M))$ there exists $\varphi\in C_{\alpha}^{\infty}(\partial_{+}SM)$
such that $u=I^*_{m}\varphi$.\\
(5) for every $u\in C^{\infty}(S^{m}_{sol}(T^*M))$ 
there exists $f\in C^{\infty}(SM)$ with $Xf=0$ such that $L_{m}f=u$.
\label{thm:main}
\end{Theorem}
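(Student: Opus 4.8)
The plan is to group the five statements into a ``rough'' tier (1)--(3), living at the level of $L^2$ and $H^{-1}$, and a ``smooth'' tier (4)--(5), and to bridge the tiers only at the end by elliptic regularity. Everything rests on two elementary identities already recorded in the excerpt: $I_m^*=L_m\circ I^*$ and $I^*\varphi=\varphi^\sharp$, together with the fact that $\varphi^\sharp$ is constant along orbits, i.e. $X\varphi^\sharp=0$.

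First I would prove (2)$\Leftrightarrow$(3) and, word for word in the smooth category, (4)$\Leftrightarrow$(5). The forward implications are immediate: set $f:=I^*\varphi=\varphi^\sharp$, so that $Xf=0$ automatically and $L_mf=L_mI^*\varphi=I_m^*\varphi=u$; the only thing to verify is that $\sharp=I^*\colon H^{-1}(\p_+SM)\to H^{-1}(SM)$ is bounded, which holds by duality since $I\colon H^1_0(SM)\to H^1_0(\p_+SM)$ is bounded. The reverse implications are the delicate part of this step: given an invariant distribution $f$ with $L_mf=u$, one must make sense of the trace $\varphi:=f|_{\p_+SM}$ and check $f=\varphi^\sharp$. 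Here I would use that $Xf=0$ confers extra regularity of $f$ in the flow direction, so that restriction to the non-characteristic hypersurface $\p_+SM$ is well defined in $H^{-1}(\p_+SM)$; once $\varphi$ exists, the identity $f=\varphi^\sharp$ follows because both sides are $X$-invariant and agree on the incoming boundary, and a non-trapping manifold has every orbit meeting $\p_+SM$.

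Next I would handle the implications feeding (1). Both (2)$\Rightarrow$(1) and (4)$\Rightarrow$(1) are pure duality: since $\operatorname{Ran}(I_m^*)\subseteq(\ker I_m)^\perp$, any solenoidal $u$ that lies simultaneously in $\ker I_m$ and, by surjectivity, in $\operatorname{Ran}(I_m^*)$ must vanish; concretely, writing $u=I_m^*\varphi$ and pairing against $u$ gives $\|u\|_{L^2}^2=(\varphi,I_mu)=0$ because $I_mu=0$, whence s-injectivity. The heart of the theorem is the reverse direction (1)$\Rightarrow$(2). The general Hilbert-space identity $\overline{\operatorname{Ran}(I_m^*)}=(\ker I_m)^\perp$, combined with s-injectivity --- which forces $\ker I_m$ to be exactly the potential tensors (any $w$ splits as $w^s+dp$, $I_mw=I_mw^s$, so $w^s=0$), whose orthogonal complement is $L^2(S^m_{sol}(T^*M))$ --- shows only that $\operatorname{Ran}(I_m^*)$ is \emph{dense} in $L^2(S^m_{sol}(T^*M))$. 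Upgrading density to surjectivity requires closed range for $I_m^*$, equivalently a stability estimate $\|u\|_{L^2}\le C\|I_mu\|_{H^1}$ on solenoidal $u$. This is where simplicity is essential: the normal operator $N_m=I_m^*I_m$ is an elliptic pseudodifferential operator of order $-1$ on solenoidal tensors over an engulfing manifold $\tM\supset M$, so a parametrix yields a Fredholm estimate modulo a compact remainder, and s-injectivity eliminates the kernel to promote this to the desired estimate. I expect this to be the main obstacle, the subtle points being the passage to $\tM$, control of the extension across $\p M$, and the fact that the solenoidal projection does not commute with $N_m$.

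Finally, to reach the smooth tier I would prove (1)$\Rightarrow$(4) by a regularity bootstrap: taking the $H^{-1}$ solution furnished by (2)$\Leftrightarrow$(3), the ellipticity of $N_m$ on $\tM$ together with the smoothness of $u$ forces the solenoidal solution to be smooth, and elliptic regularity up to the boundary yields $\varphi\in C_\alpha^\infty(\p_+SM)$. With (4)$\Leftrightarrow$(5) and (4)$\Rightarrow$(1) already in hand, the cycle closes and all five statements become equivalent. The two places that demand genuine care throughout are the trace theory for $X$-invariant distributions underlying the reverse passage in (2)$\Leftrightarrow$(3), and the closed-range plus elliptic-regularity machinery powering (1)$\Rightarrow$(2) and (1)$\Rightarrow$(4).
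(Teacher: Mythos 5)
Your tier structure and the easy forward directions do match the paper: (2)$\Rightarrow$(3) and (4)$\Leftrightarrow$(5) via $f=\varphi^{\sharp}$, the boundedness of $I^*$ on $H^{-1}$ by duality, and the identification of the normal operator $N=I_m^*I_m$ as the engine of (1)$\Rightarrow$(2) are all as in the paper. However, the proposal has genuine gaps at precisely the points where the paper works hardest.

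First, (1)$\Rightarrow$(2) and (1)$\Rightarrow$(4). The parametrix identity (Lemma \ref{partial parametrix}) reads $NQ=E-dS\delta-K$, so $NQ$ inverts $N$ only modulo the potential term $dS\delta$; applied to the extension by zero of $u\in L^2(S^m_{sol}(T^*M))$ this term does \emph{not} vanish, because the zero-extension fails to be solenoidal across $\p M$ (its distributional divergence has a surface contribution $j_{\nu}u$ on $\p M$). The missing ingredient --- and the paper's main technical contribution --- is a bounded \emph{compactly supported solenoidal extension} operator $\mathcal{E}$ (Propositions \ref{extension} and \ref{smooth extension}), built by solving the Neumann-type elliptic system $\delta dw=0$, $j_{\mu}dw=-j_{\nu}u$ on $U\setminus\overline{M}$ after a generic $C^{K}$-small perturbation of the metric eliminating nontrivial Killing tensors \cite{KM}, the Killing fields being exactly the obstruction to solvability. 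Only with $\mathcal{E}$ does one get $r_MNQ\mathcal{E}=E+r_MK\mathcal{E}$ with $r_MK\mathcal{E}$ compact on $L^2(S^m_{sol}(T^*M))$, hence closed range of $r_MN$, with s-injectivity killing the cokernel via $(r_MN)^*=N\mathcal{E}_0$ (Lemma \ref{normal}). Your substitute reduction, ``closed range of $I_m^*$ $\Leftrightarrow$ $\|u\|_{L^2}\leq C\|I_mu\|_{H^1}$ on solenoidal $u$,'' has a space mismatch: closed range of $I_m:H^1_0\to H^1_0(\p_+SM)$ is equivalent to an estimate in the quotient $H^1$-norm modulo $\ker I_m$, not an $L^2$ bound, and even granted it would identify $\operatorname{Ran}(I_m^*)$ with an annihilator inside $H^{-1}(S^m(T^*M))$, from which membership of a given $L^2$ solenoidal $u$ still has to be extracted; moreover, passing from data on $\p_+S\tM$ to $\varphi\in H^{-1}(\p_+SM)$ needs the Hahn--Banach step with the norm comparison \eqref{equivalent norm 1}, which you do not address. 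Likewise (1)$\Rightarrow$(4) is not an elliptic bootstrap: $v$ with $r_MNv=u$ is highly non-unique and $N$ is elliptic only on solenoidal parts, so smoothness of $u$ does not propagate to $v$; the paper reruns the Fredholm argument in the smooth category (Lemma \ref{smooth normal}, following \cite[Lemma 2.2]{DU}), which requires the smooth solenoidal extension of Proposition \ref{smooth extension}.

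Second, two of your ``easy'' duality steps fail as stated. (i) (2)$\Rightarrow$(1) is not pure duality: for $\varphi\in H^{-1}(\p_+SM)$ the identity $(I_m^*\varphi,w)=(\varphi,I_mw)$ is the \emph{definition} of $I_m^*\varphi$ and holds only for $w\in H^1_0(S^m(T^*M))$; your test tensor $u$ is smooth solenoidal but need not vanish on $\p M$, so the chain $\|u\|^2=(\varphi,I_mu)=0$ is not licensed (and $u$ cannot be $H^1$-approximated from $H^1_0$). (ii) Your reverse step (3)$\Rightarrow$(2) rests on a trace $\varphi:=f|_{\p_+SM}$ of an invariant $f\in H^{-1}(SM)$; but $X$ is tangent to $\p SM$ along the glancing set $S(\p M)$, so $\p_+SM$ is \emph{not} non-characteristic there and no such trace theory is available --- the paper takes restrictions only in the smooth tier, in (5)$\Rightarrow$(4). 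The paper avoids both problems by closing the loop as (1)$\Rightarrow$(2)$\Rightarrow$(3)$\Rightarrow$(1), where (3)$\Rightarrow$(1) pairs the invariant distribution against a boundary-corrected test function: solve $Xh=-\ell_mu$ with $h|_{\p SM}=0$, choose $p$ with $p|_{\p M}=0$ and $u|_{\p M}=dp|_{\p M}$ (\cite[Lemma 2.3]{Sh3}), and verify $h+\ell_{m-1}p\in H^2_0(SM)$, so that $0=(Xf,h+\ell_{m-1}p)=\|u\|^2$ is a legitimate pairing. Without this boundary correction, or some substitute for it, your cycle of implications does not close.
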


We observe that by \cite[Theorem 1.1]{SSU}, s-injectivity of $I_{m}$ on $L^{2}(S^m(T^*M))$ is equivalent
to s-injectivity of $I_{m}$ on $C^{\infty}(S^m(T^*M))$.

Let us return to the subject of special first integrals associated with the geodesic flow. By considering the vertical Laplacian $\Delta$ on each fibre $S_{x}M$ of $SM$ we have a natural $L^2$-decomposition $L^{2}(SM)=\oplus_{m\geq 0}H_m(SM)$
into vertical spherical harmonics. We set $\Omega_m:=H_{m}(SM)\cap C^{\infty}(SM)$. Then a function $u$ belongs to $\Omega_m$ if and only if
$\Delta u=m(m+n-2)u$ where $n=\dim M$. 
The maps
$$\ell_m: C^{\infty}(S^m(T^*M))\to \bigoplus_{k=0}^{[m/2]}\Omega_{m-2k},$$ 
and
$$L_{m}:\bigoplus_{k=0}^{[m/2]}\Omega_{m-2k}\to C^{\infty}(S^m(T^*M))$$
are isomorphisms.
These maps give natural identification between functions in $\Omega_m$ and {\it trace-free} symmetric $m$-tensors (for details on this see \cite{GK2,DS,PSU5}).
If $(M,g)$ is a simple manifold with $I_{m}$ s-injective, item (5) in Theorem \ref{thm:main} is saying that given any $u\in C^{\infty}(S^{m}_{sol}(T^*M))$ there is a first integral of the geodesic flow $f$ such that $L_{m}f=u$. In other words if we let $F=L_{m}^{-1}u\in \bigoplus_{k=0}^{[m/2]}\Omega_{m-2k}$ and $G=f-F$, we see that $F$ is polynomial of degree $m$ in velocities and it can be completed by adding $G$ to obtain a first integral. We also see that (taking even or odd part of $f$ if necessary) $G\in \bigoplus_{k\geq 1}\Omega_{m+2k}$. These were the functions mentioned earlier in the introduction. If $G$ were to be zero, then there would be a first integral that is polynomial in velocities and generically these do not exist. Passing by we note that the paper \cite{PSU5} also constructs invariant distributions (they are not smooth in general) with prescribed $m$-th polynomial component using a different method (a Beurling transform), but it requires non-positive curvature for it to work. As already mentioned, here we use instead the normal operator $I^*_{m}I_{m}$.

The results in \cite{PU,DU} prove that (1) implies (4) or (5) in Theorem \ref{thm:main} for $m=0,1$, so the main contribution in the theorem is to cover the case $m\geq 2$ and also to provide additional invariant distributions associated with $L^2$ solenoidal tensors. The proof of Theorem \ref{thm:main} relies on a solenoidal extension of tensor fields.  For $m=0$ no extension is needed and for $m=1$ the situation is considerably simpler and an extension result is already available in \cite{KMPT}.
Paradoxically the need for a solenoidal extension does not arise in the more complicated setting of Anosov manifolds since there is no boundary. In this setting an analogous result to Theorem \ref{thm:main} (in the $L^2$-setting) has been recently proved by
C. Guillarmou in \cite[Corollary 3.7]{G} and these ideas gave rise to a full solution to the tensor tomography problem on an Anosov surface.

Since in $2D$ the tensor tomography problem has been fully solved \cite{PSU1} we derive:

\begin{Corollary} Let $(M,g)$ be a compact simple surface. For every $u\in C^{\infty}(S^{m}_{sol}(T^*M))$ 
there exists $f\in C^{\infty}(SM)$ with $Xf=0$ such that $L_{m}f=u$.
\label{thm:2D}
\end{Corollary}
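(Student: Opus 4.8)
The plan is to obtain the Corollary as an immediate specialization of Theorem \ref{thm:main} to dimension two, using the known resolution of tensor tomography on simple surfaces as the one nontrivial input. First I would note that the conclusion we want is precisely statement (5) of Theorem \ref{thm:main}: given $u\in C^{\infty}(S^m_{sol}(T^*M))$ we must produce $f\in C^{\infty}(SM)$ with $Xf=0$ and $L_m f=u$. Since a compact simple surface is a fortiori a compact simple Riemannian manifold, Theorem \ref{thm:main} applies, and it reduces the task to checking condition (1), the s-injectivity of $I_m$ on $C^{\infty}(S^m(T^*M))$.

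The verification of (1) is supplied by the two-dimensional tensor tomography theorem. By \cite{PSU1}, on any compact simple surface the kernel of $I_m$ acting on symmetric $m$-tensors consists exactly of the potential tensors $dp$ with $p|_{\p M}=0$ for every $m\geq 0$; equivalently, $I_m$ restricted to the solenoidal tensors $C^{\infty}(S^m_{sol}(T^*M))$ is injective. This is condition (1). Invoking the implication (1) $\Rightarrow$ (5) from Theorem \ref{thm:main} then yields the desired smooth first integral $f$ for every prescribed solenoidal $u$, which completes the argument.

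In this reduction there is essentially no internal obstacle: the statement is a corollary in the literal sense, and its proof amounts to citing the right equivalence together with the right injectivity result. The genuine mathematical difficulty lies entirely in the two ingredients being invoked—the full solution of tensor tomography on simple surfaces in \cite{PSU1}, which establishes (1) in the present generality for all $m$, and the equivalence packaged in Theorem \ref{thm:main}, whose implication (1) $\Rightarrow$ (5) rests on the solenoidal extension of tensor fields discussed in the introduction. Once those are granted, the passage to the Corollary is purely formal.
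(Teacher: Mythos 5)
Your proposal is correct and matches the paper's own derivation of Corollary \ref{thm:2D}: the authors obtain it precisely by combining the s-injectivity of $I_m$ on compact simple surfaces established in \cite{PSU1} with the implication (1) $\Rightarrow$ (5) of Theorem \ref{thm:main}. Note only that the paper also gives, in Section 7, a second, alternative proof via \cite[Lemma 5.6]{PSU3} and the fibrewise decomposition $\Omega_m=\Lambda_m\oplus\Lambda_{-m}$, which avoids the smooth solenoidal extension, but your route is the primary one.
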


We shall also give an alternative proof of the corollary using results from \cite{PSU3}. The alternative proof
avoids the smooth solenoidal extension and sheds some light into the relationship between the transport equation
and the solenoidal condition.

The rest of the paper is organized as follows. Section 2 contains some preliminaries. In Section 3 we establish the $L^2$  and $C^{\infty}$ compactly supported solenoidal extension of tensor fields. This necessitates at some point of the generic non-existence of non-trivial Killing tensor fields recently proved in \cite{KM}. Section 4 uses the well-established microlocal analysis to prove a surjectivity result for $I^*_{m}I_{m}$ following the strategy in \cite{DU}.
Section 5 establishes various boundedness properties on Sobolev spaces that allows us to extend
the relevant operators to negative Sobolev spaces (i.e. distributions). Section 6 bundles up everything
together and proves Theorem \ref{thm:main}. The final Section 7 gives an alternative proof of Corollary \ref{thm:2D} and clarifies the connection between solenoidal tensors and the transport equation.

\bigskip

\noindent {\bf Acknowledgements.} \  We are very grateful to Mikko Salo and Gunther Uhlmann for several discussions related to this paper; our main result  is motivated by a question raised in
\cite[Remark 11.7]{PSU5}.
We are also very grateful to Boris Kruglikov and Vladimir Matveev for extensive discussions related to Killing tensors and for the input provided in \cite{KM}. 
Finally we would like to thank Colin Guillarmou and Plamen Stefanov for invaluable comments on the first draft.
In particular, Stefanov provided us with the proof of Lemma \ref{local solenoidal extension} and Guillarmou
with the first part of Remark \ref{rem:colin}.
This research was supported by EPSRC grant EP/M023842/1.


\section{preliminaries}

In this section we provide details about the regularity properties of the operators introduced in the previous section. First we describe the basic notation we will use frequently in the rest of the paper. Given a compact Riemannian manifold $\mathcal{M}$ with boundary, we define:
$$C^{\infty}_c(\mathcal{M}^{int}):=\{f\in C^{\infty}(\mathcal{M}): \mbox{supp}\,f\subset \mathcal{M}^{int}\};$$
$$H^{k}_c(\mathcal{M}^{int}):=\{f\in H^{k}(\mathcal{M}): \mbox{supp}\,f\subset \mathcal{M}^{int}\},\quad \mbox{for}\, k\in \mathbb{Z}.$$
Then for any $s>0, s\in\mathbb{Z}$, $H^s_0(\mathcal{M})$ is the completion of $C^{\infty}_c(\mathcal{M}^{int})$ under the $H^s$ norm. 

Now let $M$ be a compact manifold, given $f\in C^{\infty}(SM),\, u\in C^{\infty}(S^m(T^*M))$, then
\begin{equation*}
\begin{split}
(\ell_m u, f) & =\int_{SM} u_{j_1\cdots j_m}(x)\xi^{j_1}\cdots \xi^{j_m} f(x,\xi)\, d\Sigma^{2n-1} \\
& =\int_{M} u_{j_1\cdots j_m}(x) \int_{S_xM} f(x,\xi)\xi^{j_1}\cdots \xi^{j_m}\, d\Omega_x(\xi) \, dV^n(x).
\end{split}
\end{equation*}
This means that 
$$L_m=\ell_m^*: C^{\infty}(SM)\to C^{\infty}(S^m(T^*M))$$
is given by $$L_m f (x)_{i_1\cdots i_m}=g_{i_1 j_1}\cdots g_{i_m j_m}\int_{S_xM} f(x,\xi)\xi^{j_1}\cdots \xi^{j_m}\, d\Omega_x(\xi).$$
Since the metric tensor $g$ is smooth, for the sake of simplicity, we identify $L_mf$ with its dual,
$$L_mf(x)^{j_1\cdots j_m}=\int_{S_xM}f(x,\xi)\xi^{j_1}\cdots\xi^{j_m}\,d\Omega_x(\xi).$$

On the other hand, it is easy to see that the map $\ell_m$ can be extend to the bounded operator
$$\ell_m: H^k(S^m(T^*M))\to H^k(SM)$$
for any integer $k\geq 0$. In particular $\ell_m(H_0^k(S^m(T^*M)))\subset H^k_0(SM)$. Therefore we can define 
\begin{equation}\label{L on -k}
L_m: H^{-k}(SM)\to H^{-k}(S^m(T^*M))
\end{equation}
in the sense of distributions and it is bounded.

Next, if $M$ is compact non-trapping with strictly convex boundary, we study the properties of $I$ and its adjoint $I^*$. Recall a useful integral identity called Santal\'o's formula. 
\begin{Lemma}{\cite[Lemma 3.3.2]{Sh2}}
Let $M$ be a compact non-trapping Riemannian manifold with strictly convex boundary. For every function $f\in C(SM)$ the equality
$$\int_{SM}f(x,\xi)\, d\Sigma^{2n-1}(x,\xi)=\int_{\partial_+SM}\,d\mu(x,\xi) \int_0^{\tau(x,\xi)}f(\gamma_{x,\xi}(t),\dot{\gamma}_{x,\xi}(t))\,dt$$
holds.
\end{Lemma}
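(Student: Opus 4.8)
The plan is to realize both sides of the identity as integrals over one common parameter domain via the flow-out of the inflow boundary, reducing the equality to an identification of measures (a Jacobian computation). Write $\phi_t(x,\xi):=(\gamma_{x,\xi}(t),\dot{\gamma}_{x,\xi}(t))$ for the geodesic flow and consider the map
\[
\Psi:\mathcal D\to SM,\qquad \Psi(x,\xi,t):=\phi_t(x,\xi),
\]
defined on $\mathcal D:=\{(x,\xi,t): (x,\xi)\in\partial_+SM,\ 0\le t\le\tau(x,\xi)\}$. Since $M$ is non-trapping with strictly convex boundary, every $(y,\eta)\in SM$ lies on a unique maximal geodesic which, traced backwards, exits transversally through a unique point of $\partial_+SM$; hence $\Psi$ restricts to a diffeomorphism from the interior of $\mathcal D$ onto $SM$, the exceptional set (tangential directions over $\partial M$, i.e.\ $S(\partial M)$, together with the graph $t=\tau$) having measure zero. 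I would first verify this parametrization carefully, using strict convexity to ensure that $\tau$ is smooth away from $S(\partial M)$ and that the backward exit point depends smoothly on $(y,\eta)$.

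The heart of the proof is to establish the Jacobian identity
\[
\Psi^*\,d\Sigma^{2n-1}=d\mu\wedge dt=|\<\xi,\nu(x)\>_g|\,d\Sigma^{2n-2}(x,\xi)\wedge dt.
\]
I would argue this in two steps. First, since the geodesic vector field $X$ preserves $d\Sigma^{2n-1}$ (as recalled in the Introduction), the flow maps $\phi_t$ are volume preserving, so the pullback of $d\Sigma^{2n-1}$ along the $t$-slices is independent of $t$ and it suffices to evaluate the Jacobian at $t=0$, on $\partial_+SM$ itself. Second, at $t=0$ the parameter $t$ is transported by $X=d\Psi(\partial_t)$, so the relevant factor is the flux form, namely the contraction $\iota_X\,d\Sigma^{2n-1}$ restricted to $\partial SM$. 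A local computation in boundary normal coordinates, decomposing the base projection $\xi$ of $X$ into its component $\<\xi,\nu\>_g$ along the outward normal and its tangential part, identifies this flux form with $\<\xi,\nu(x)\>_g\,d\Sigma^{2n-2}$; taking absolute values (recall $\<\xi,\nu\>_g\le 0$ on $\partial_+SM$) yields $d\mu$.

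With the change of variables in hand, the formula follows from Fubini's theorem:
\[
\int_{SM}f\,d\Sigma^{2n-1}=\int_{\mathcal D}(f\circ\Psi)\,\Psi^*d\Sigma^{2n-1}=\int_{\partial_+SM}d\mu(x,\xi)\int_0^{\tau(x,\xi)}f(\phi_t(x,\xi))\,dt,
\]
where continuity of $f$ and the negligibility of the exceptional set justify passing back to all of $SM$. I expect the main obstacle to be the clean identification $\iota_X\,d\Sigma^{2n-1}|_{\partial SM}=\<\xi,\nu\>_g\,d\Sigma^{2n-2}$, i.e.\ pinning down the precise normalization and orientation of the fibrewise and base volume forms so that no spurious constant appears; once this boundary identity is correct, the volume preservation of $X$ does the real work of propagating it into the interior.
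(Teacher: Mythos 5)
Your proposal is correct and takes essentially the same approach as the source: the paper gives no proof of this lemma, quoting it directly from \cite[Lemma 3.3.2]{Sh2}, and the argument there is precisely yours — parametrize $SM$ by the flow-out map $\Psi$ on $\{(x,\xi,t): (x,\xi)\in\partial_+SM,\ 0\le t\le\tau(x,\xi)\}$, establish $\Psi^*d\Sigma^{2n-1}=d\mu\wedge dt$ via invariance of the Liouville form under the geodesic flow together with the boundary flux identity $\iota_X d\Sigma^{2n-1}|_{\partial SM}=\<\xi,\nu\>_g\,d\Sigma^{2n-2}$, and conclude by Fubini. Your handling of the exceptional set ($S(\partial M)$ and the locus $t=\tau$, both of measure zero, with $\tau$ smooth away from the glancing region by strict convexity) is exactly the care the cited proof takes.
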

Notice that the definition of compact dissipative Riemannian manifold (CDRM) in \cite{Sh2} is equivalent to compact non-trapping manifolds with strictly convex boundary.

Now let $\varphi\in C^{\infty}_{\alpha}(\p_+SM),\, f\in C^{\infty}(SM)$, by Santal\'o's formula
\begin{equation*}
\begin{split}
(If,\varphi)&=\int_{\p_+SM}\varphi(x,\xi)\,d\mu\int_0^{\tau(x,\xi)}f(\gamma_{x,\xi}(t), \dot{\gamma}_{x,\xi}(t))\, dt\\
&=\int_{\p_+SM}\,d\mu\int_0^{\tau(x,\xi)}\varphi^{\sharp}(\gamma_{x,\xi}(t),\dot{\gamma}_{x,\xi}(t))f(\gamma_{x,\xi}(t), \dot{\gamma}_{x,\xi}(t))\, dt\\
&=\int_{SM}\varphi^{\sharp}f\,d\Sigma^{2n-1}.
\end{split}
\end{equation*}
Thus $I^*\varphi=\varphi^{\sharp}$ with
$$I^*: C^{\infty}_{\alpha}(\p_+SM)\to C^{\infty}(SM)$$
bounded. By the proof of \cite[Theorem 4.2.1]{Sh1}, one can extend $I$ to a bounded operator 
$$I: H^k(SM)\to H^k(\p_+SM)$$ and $I(H^k_0(SM))\subset H^k_0(\p_+SM)$ for any integer $k\geq 0$ (notice that $I(C^{\infty}_c((SM)^{int}))\subset C^{\infty}_c((\p_+SM)^{int})$). Thus we can define the bounded operator
\begin{equation}\label{phi on -k}
I^*: H^{-k}(\p_+SM)\to H^{-k}(SM)
\end{equation}
in the sense of distributions.

Now given $u\in H_0^k(S^m(T^*M))$ and $\varphi\in H^{-k}(\p_+SM)$, $I_m^*\varphi$ is defined in the sense of distributions
$$(I_m^*\varphi,u):=(I^*\varphi, \ell_m u)=(\varphi, I\circ \ell_m u)=(\varphi, I_mu).$$

\begin{Lemma}
Given a compact non-trapping Riemannian manifold $M$ with strictly convex boundary,
$$I_m^*=L_m\circ I^*: H^{-k}(\p_+SM))\to H^{-k}(S^m(T^*M))$$
is a bounded operator.
\end{Lemma}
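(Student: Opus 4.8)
The plan is to build the identity $I_m^*=L_m\circ I^*$ as a chain of three facts, each already assembled in the preliminaries, and then observe that boundedness is inherited from the factors. First I would fix the functional-analytic setting: by the discussion leading to \eqref{phi on -k}, the operator $I^*\colon H^{-k}(\p_+SM)\to H^{-k}(SM)$ is bounded, and by the discussion leading to \eqref{L on -k}, the operator $L_m\colon H^{-k}(SM)\to H^{-k}(S^m(T^*M))$ is bounded. Hence the composition $L_m\circ I^*\colon H^{-k}(\p_+SM)\to H^{-k}(S^m(T^*M))$ is a bounded operator, being a composition of two bounded maps between the relevant negative Sobolev spaces.

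It remains to identify this composition with the distributionally-defined $I_m^*$. The cleanest route is to test against a dense class: I would take $\varphi\in H^{-k}(\p_+SM)$ and $u\in H^k_0(S^m(T^*M))$ and compute the pairing $(L_m\circ I^*\varphi, u)$ directly. Using that $L_m=\ell_m^*$ is the transpose of $\ell_m$ (established in the preliminaries, where $(\ell_m u,f)=(u,L_m f)$ extends by duality to negative Sobolev exponents because $\ell_m(H^k_0(S^m(T^*M)))\subset H^k_0(SM)$), the pairing becomes
\[
(L_m\, I^*\varphi,\; u)=(I^*\varphi,\; \ell_m u)=(\varphi,\; I\circ \ell_m u)=(\varphi,\; I_m u),
\]
where the middle equality is the duality definition of $I^*$ on $H^{-k}$ from \eqref{phi on -k}, valid since $\ell_m u\in H^k_0(SM)$ and $I(H^k_0(SM))\subset H^k_0(\p_+SM)$. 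But $(\varphi, I_m u)$ is exactly the distributional definition of $(I_m^*\varphi, u)$ recorded just before the statement. Since $u\in H^k_0(S^m(T^*M))$ was arbitrary and this space is dense, the two distributions $L_m\, I^*\varphi$ and $I_m^*\varphi$ coincide.

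The only genuine care needed is to ensure that each pairing above is legitimate, i.e. that the arguments land in the correct spaces so the dualities $H^{-k}$ versus $H^k_0$ make sense at every step. Concretely, one must check that $\ell_m$ maps $H^k_0(S^m(T^*M))$ into $H^k_0(SM)$ and that $I$ maps $H^k_0(SM)$ into $H^k_0(\p_+SM)$; both inclusions are already asserted in the preliminaries, the latter following from $I(C^\infty_c((SM)^{int}))\subset C^\infty_c((\p_+SM)^{int})$ together with the boundedness of $I$ on the nonnegative Sobolev scale. I expect this bookkeeping of function spaces to be the only subtle point; there is no analytic obstacle beyond it, since the factorization $I_m=I\circ\ell_m$ dualizes formally to $I_m^*=\ell_m^*\circ I^*=L_m\circ I^*$, and the entire content of the lemma is that this formal transpose genuinely holds at the level of the bounded extensions to distributions.
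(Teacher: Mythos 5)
Your proposal is correct and follows essentially the same route as the paper: the paper also defines $(I_m^*\varphi,u):=(I^*\varphi,\ell_m u)=(\varphi,I\circ\ell_m u)=(\varphi,I_m u)$ for $u\in H^k_0(S^m(T^*M))$, relying on the inclusions $\ell_m(H^k_0(S^m(T^*M)))\subset H^k_0(SM)$ and $I(H^k_0(SM))\subset H^k_0(\p_+SM)$, and then obtains boundedness of $I_m^*=L_m\circ I^*$ from the bounded extensions \eqref{L on -k} and \eqref{phi on -k}. Your duality computation and space-membership checks match the paper's argument step for step.
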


To conclude this section, we briefly discuss $X$, the generating vector field of the geodesic flow on the unit sphere bundle $SM$, acting on distributions. Since $X$ is a differential operator on $SM$, it is obvious that 
$$X: H^{k+1}(SM)\to H^{k}(SM) \quad k\geq 0.$$
For $f\in H^{-k}(SM)$ and $h\in H^{k+1}_0(SM)$ (so $Xh\in H^{k}_0(SM)$), we define $Xf\in H^{-k-1}(SM)$ in the sense of distributions (notice that the volume form $d\Sigma^{2n-1}$ is invariant under the geodesic flow)
$$(Xf,h):=(f,-Xh).$$


\section{Solenoidal extensions}
In the paper \cite{KMPT}, the authors proved the existence of compactly supported solenoidal extensions of solenoidal 1-forms to some larger manifold in both $L^2$ and smooth cases, namely

\begin{Proposition}
Let $\Omega$ be a bounded simply connected domain, with smooth boundary, contained in some Riemannian manifold $\mathcal{M}$. Let $U$ be an open neighborhood of $\Omega$ with $\p U$ smooth. Then there exists a bounded map $\mathcal{E}: L^2_{sol}(T^*\Omega)\to L^2_{U, sol}(T^*\mathcal{M})$, such that $\mathcal{E}|_{\Omega}=Id$. Moreover, $\mathcal{E}(C^{\infty}_{sol}(T^*\Omega))\subset C^{\infty}_{U,sol}(T^*\mathcal{M})$.
\end{Proposition}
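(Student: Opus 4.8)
The plan is to reduce the statement to the solvability of a divergence equation on the collar region $A:=U\setminus\overline{\Omega}$ with a bounded, regularity-preserving right inverse. Recall that for $1$-forms the solenoidal condition is exactly $\delta\alpha=0$, where $\delta$ is the codifferential (equivalently, the dual vector field is divergence free). First I would fix once and for all a cut-off $\chi\in C^{\infty}(\mathcal{M})$ that is $\equiv 1$ on a neighborhood of $\overline{\Omega}$ and has $\mathrm{supp}\,\chi\subset U$, together with a Stein-type total extension operator $E$ that is bounded $H^{k}(T^*\Omega)\to H^{k}(T^*\mathcal{M})$ for every $k\ge 0$ and maps smooth forms to smooth forms. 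Setting $\beta:=\chi\,E\alpha$ produces a first extension that is bounded on every Sobolev space, is supported in $U$, and satisfies $\beta|_{\Omega}=\alpha$; the price is that $\beta$ is no longer solenoidal away from $\Omega$.

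Second, I would correct $\beta$ by a $1$-form supported in $\overline{A}$ and vanishing on $\partial A=\partial\Omega\cup\partial U$ that cancels the spurious divergence. Since $\chi\equiv 1$ near $\overline{\Omega}$ and $\chi\equiv 0$ off $U$, the datum $f:=\delta\beta$ vanishes on $\Omega$ and off $U$, so it is supported in a compact collar inside $A$; moreover Green's identity gives $\int_{\partial\Omega}\langle\alpha,\nu\rangle=-\int_{\Omega}\delta\alpha=0$, which forces the mean-zero compatibility condition $\int_{A}f=0$ whenever $A$ is connected. This is precisely the setting of the Bogovskii (regularized Poincaré) right inverse of the divergence on a bounded Lipschitz/John domain: there is a linear operator $B$ with $\delta(Bf)=-f$, with $Bf$ vanishing on $\partial A$, which is a classical operator of order $-1$ and hence bounded $H^{s}\to H^{s+1}$ for all $s$ and maps $C_{c}^{\infty}$ to $C_{c}^{\infty}$. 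I would then put $\mathcal{E}\alpha:=\beta+Bf$, extended by zero across $\partial A$. By construction $\mathcal{E}\alpha$ is supported in $U$, equals $\alpha$ on $\Omega$ (because $Bf$ vanishes there), and is solenoidal on all of $\mathcal{M}$: the interior computations give $\delta(\mathcal{E}\alpha)=0$ on $\Omega$, on $A$, and on $\mathcal{M}\setminus\overline{U}$, while the $H^{1}_{0}$/compact-support behaviour of $Bf$ excludes any singular contribution along the interfaces. Linearity and the two boundedness/regularity statements are inherited from $E$, multiplication by $\chi$, $\delta$, and $B$. (Equivalently one could write $\alpha=\star\,d\eta$ with $\eta$ an $(n-2)$-form and extend $\eta$, making solenoidality automatic, but the estimates then rest on the same order $-1$ homotopy operators.)

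The main obstacle is the genuine $L^{2}$ estimate. The naive chain loses a derivative, since controlling $f=\delta\beta$ only bounds $\|\beta\|_{H^{1}}\le C\|\alpha\|_{H^{1}}$, whereas the claim demands $\|\mathcal{E}\alpha\|_{L^{2}}\le C\|\alpha\|_{L^{2}}$; this is exactly where the smoothing of the right inverse is indispensable, as one must use that $B$ gains a full derivative at the bottom of the scale, $B:H^{-1}(A)\to L^{2}(A)$, so that $\|Bf\|_{L^{2}}\le C\|f\|_{H^{-1}}\le C\|\beta\|_{L^{2}}\le C\|\alpha\|_{L^{2}}$ (with the mean-zero condition suitably interpreted for $H^{-1}$ data). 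Two geometric points must also be handled, and they are where the hypotheses enter: since $A$ is annular rather than star-shaped, one invokes the version of Bogovskii's construction valid on connected Lipschitz domains, for which a single global mean-zero condition suffices; and this forces $A$ to be taken connected with no interior flux obstruction, which is guaranteed in the relevant situation where the simple-connectivity hypothesis makes $\Omega$ ball-like with connected boundary (for a disconnected boundary carrying nonzero flux the extension is genuinely impossible). Once the $L^{2}$ bound is established on smooth solenoidal forms, passing to all of $L^{2}_{sol}(T^*\Omega)$ is a routine density argument.
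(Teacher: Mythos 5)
Your construction is essentially correct, but it takes a genuinely different route from the paper. The paper does not reprove this $m=1$ statement at all -- it quotes it from \cite{KMPT} -- and its own proofs of the higher-rank analogues (Propositions 3.2 and 3.6) correct the crude extension by solving a Neumann-type elliptic problem on the collar $D=U\setminus\overline{\Omega}$: one solves $\delta dw=0$ in $D$ with $j_{\mu}dw=-j_{\nu}u$ on $\p\Omega$ and $0$ on $\p U$, and glues $\mathcal{E}u=dw$ there; for $m=1$ this is just a scalar Neumann problem for the Laplacian, solvable iff the datum is orthogonal to the Killing $0$-tensors (constants), i.e.\ iff the flux $\int_{\p\Omega}\langle\alpha,\nu\rangle$ vanishes -- exactly the compatibility you meet as the mean-zero condition for Bogovskii. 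The two approaches buy different things: the paper's elliptic route gets the $L^2\to L^2$ bound directly from the estimate $\|dw\|_{L^2}\leq C\|j_{\nu}u\|_{H^{-1/2}(\p\Omega)}\leq C\|u\|_{L^2}$, needs no negative-order machinery, and scales to $m\geq 2$ (where the kernel is Killing $(m-1)$-tensors and a generic perturbation of the metric is invoked to kill them); your cutoff-plus-Bogovskii route is more elementary for $m=1$ and handles the $L^2$ and $C^{\infty}$ statements in one stroke, but it genuinely needs the Costabel--McIntosh negative-order boundedness $B:\widetilde{H}^{-1}\to L^2$ (with $\delta(Bf)=-f$ in the extended-by-zero sense, so no interface layers appear) -- you correctly identified this as the crux -- plus the routine adaptation of Bogovskii to a Riemannian collar and a slight shrinking of the collar so the support stays inside $U$. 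Incidentally, your parenthetical alternative ($\alpha=\star\,d\eta$ and extend the potential) is in fact closer to what \cite{KMPT} themselves do. One caution on your final geometric remark: simple connectivity of $\Omega$ does \emph{not} force $\p\Omega$ to be connected in dimension $n\geq 3$ (a spherical shell in $\mathbb{R}^3$ is simply connected, and the radial field shows the extension is then genuinely impossible), so the honest hypothesis is vanishing flux through each boundary component; this is harmless for the paper's application, where $\Omega$ is a simple manifold, hence ball-like with connected boundary, and the same implicit connectedness of the collar is used in the paper's own compatibility check $(v,f)_{\p D}=0$.
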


Here $L^2_{U, sol}(T^*\mathcal{M})$ ($C^{\infty}_{U,sol}(T^*\mathcal{M})$) denotes the subspace of $L^2_{sol}(T^*\mathcal{M})$ ($C^{\infty}_{sol}(T^*\mathcal{M})$), consisting of elements suppoted in $U$.

Our goal is to extend this result to symmetric tensor fields of higher rank. However, for tensor fields of higher rank, new ideas are required and the argument is more involved.

\subsection{$L^{2}$ solenoidal extensions} We first prove the extension in the $L^2$ category by solving
a suitable elliptic system.

\begin{Proposition}\label{extension}
Let $\Omega$ be a bounded simply connected domain, with smooth boundary, contained in some Riemannian manifold $(\mathcal{M}, g)$. Let $U$ be an open neighborhood of $\Omega$ with $\p U$ smooth. Then given $m\geq 2$, $K\geq 2$ and $\epsilon>0$, there exist a Riemannian metric $\tilde{g}$ and a bounded map $\mathcal{E}: L^2(S^m_{sol}(T^*_{g}\Omega))\to L^2(S^m_{U, sol}(T^*_{\tilde{g}}\mathcal{M}))$ such that $\|\tilde{g}-g\|_{C^{K}}<\epsilon$, $\tilde{g}|_{\overline{\Omega}}=g$ and $\mathcal{E}|_{\Omega}=Id$.
\end{Proposition}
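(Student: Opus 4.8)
The plan is to extend $u$ crudely and then correct it to a $\tilde g$-solenoidal field by solving an elliptic boundary value problem on the collar $V:=U\setminus\overline{\Omega}$, choosing $\tilde g$ precisely so that the correction exists. First I would fix a bounded extension operator (Stein/Seeley type) followed by a cutoff $\chi$ equal to $1$ on $\overline{\Omega}$ and supported in $U$, producing $\bar u\in L^2(S^m(T^*\mathcal M))$ with $\bar u|_\Omega=u$ and $\mathrm{supp}\,\bar u\subset U$; this $\bar u$ is not solenoidal, but since $u$ is solenoidal on $\Omega$ and $\tilde g=g$ there, the defect $\delta_{\tilde g}\bar u$ is supported in $\overline V$. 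The key observation is that for the glued field---$u$ on $\Omega$, the corrected field on $V$, and $0$ outside $U$---to be solenoidal on all of $\mathcal M$ in the distributional sense, one needs only the normal trace $\iota_\nu w$ (the contraction of $w$ with the unit normal in one slot, which is the natural trace for the first-order operator $\delta$, well defined in $H^{-1/2}$ because $\delta u=0$) to be continuous across $\partial\Omega$ and to vanish across $\partial U$; a jump in the full tensor is harmless since $\delta$ is first order. Thus it suffices to produce a $\tilde g$-solenoidal $w$ on $V$ with $\iota_\nu w=\iota_\nu u$ on $\partial\Omega$ and $\iota_\nu w=0$ on $\partial U$.

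Next I would write the correction as $w=\bar u+d\phi$ with $\phi\in S^{m-1}(T^*\mathcal M)$ and reduce the solenoidal condition $\delta_{\tilde g}w=0$ to the elliptic equation $\delta_{\tilde g}\,d\,\phi=-\delta_{\tilde g}\bar u$ on $V$. The operator $\delta d:S^{m-1}\to S^{m-1}$ is second order, elliptic, and formally self-adjoint, and the natural conormal boundary condition coming from Green's formula $(\delta d\phi,\psi)=(d\phi,d\psi)-\int_{\partial V}\langle \iota_\nu d\phi,\psi\rangle$ is $\iota_\nu d\phi=0$ on $\partial V$. With this boundary condition one gets $\iota_\nu w=\iota_\nu\bar u$ on $\partial V$, which are exactly the traces demanded above, and $\delta_{\tilde g}w=0$ by construction. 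A pairing argument shows that the kernel (and, by self-adjointness, the cokernel) of this boundary value problem consists precisely of the fields $\phi$ with $d\phi=0$, i.e. the rank-$(m-1)$ Killing tensors of $(V,\tilde g)$.

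The solvability obstruction is therefore the requirement that $-\delta_{\tilde g}\bar u$ be $L^2$-orthogonal to all Killing tensors; integrating by parts, this quantity equals the boundary pairing $\int_{\partial V}\langle\iota_\nu\bar u,\psi\rangle$, which does not vanish in general. This is where the metric perturbation and the hypothesis $m\geq 2$ enter: since $m-1\geq 1$, the generic non-existence of non-trivial Killing tensors from \cite{KM} lets me choose $\tilde g$ with $\|\tilde g-g\|_{C^K}<\epsilon$ and $\tilde g=g$ on $\overline{\Omega}$ (perturbing only inside $V$) so that $(V,\tilde g)$ carries no non-trivial Killing $(m-1)$-tensor. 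Then the conormal problem for $\delta_{\tilde g}d$ has trivial kernel and cokernel, hence is invertible, the correction $\phi$ exists and depends boundedly on $\bar u$, and therefore on $u$; gluing yields the bounded map $\mathcal E$ with $\mathcal E|_\Omega=\mathrm{Id}$, and elliptic regularity upgrades the construction to the smooth statement. I expect the main obstacle to be the removal of the Killing-tensor obstruction by a perturbation supported in the collar while keeping $\tilde g=g$ on $\overline{\Omega}$ and preserving uniform boundedness of the solution operator; the correct identification of the normal trace as the matching datum for the gluing is the other delicate point.
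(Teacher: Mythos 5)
Your overall strategy is the paper's: reduce the compactly supported solenoidal extension to a Neumann-type boundary value problem for $\delta d$ on the collar $D=U\setminus\overline{\Omega}$, identify the kernel/cokernel with Killing $(m-1)$-tensors, remove them by a $C^K$-small perturbation of $g$ supported away from $\overline{\Omega}$ using \cite{KM}, and glue via matching of normal traces (which is indeed the right matching datum, since $\delta$ is first order). However, there is a genuine error at the crux: when $m$ is odd, the rank $m-1$ is even, and the \emph{trivial} Killing tensors $c\,\tilde g^{(m-1)/2}$ exist for \emph{every} metric; the result of \cite{KM} only removes non-trivial ones. So for odd $m$ the kernel and cokernel of your conormal problem are never trivial, and your conclusion ``trivial kernel and cokernel, hence invertible'' is false. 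Instead you must verify the Fredholm compatibility condition against the residual kernel: extend $v=c\,\tilde g^{(m-1)/2}|_D$ to $U$ (it remains Killing, in particular on $\Omega$) and compute, via Green's formula, $(v,f)_{\p D}=-(v,j_{\nu}u)_{\p\Omega}=-(v,\delta u)_{\Omega}-(dv,u)_{\Omega}=0$ using $\delta u=0$ and $dv=0$ in $\Omega$. This even/odd case split is exactly the step the paper carries out, and without it your argument does not close for odd $m$ (e.g.\ already for $m=3$).

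A second, more technical gap concerns your crude extension $\bar u$: since $u$ is only $L^2$, you get $\delta_{\tilde g}\bar u\in H^{-1}$ only, so the strong equation $\delta d\phi=-\delta_{\tilde g}\bar u$ and, more seriously, the normal trace $\iota_{\nu}\bar u$ \emph{from the collar side} (which your matching claim $\iota_{\nu}w=\iota_{\nu}u$ on $\p\Omega$ relies on) are not defined --- the $H^{-1/2}$ trace requires the divergence to be in $L^2$, which holds for $u$ on $\Omega$ but not for $\bar u$ on $V$. The clean fix is to dispense with $\bar u$ altogether and pose the inhomogeneous problem directly for $w$ on $D$, as the paper does: $\delta dw=0$ in $D$, $j_{\mu}dw=-j_{\nu}u$ on $\p\Omega$ and $j_{\mu}dw=0$ on $\p U$, then set $\mathcal{E}u=dw$ on $D$ (alternatively, recast your correction variationally, where only the functional $v\mapsto(j_{\nu}u,v)_{\p\Omega}$ is needed). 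Note also that well-posedness of this conormal problem is not automatic: the paper devotes a separate lemma to verifying the Lopatinskii condition, which you assume implicitly. Finally, your closing remark that elliptic regularity ``upgrades the construction to the smooth statement'' is not correct: the glued field matches only normal traces across $\p\Omega$, not the full boundary jet, so it is not smooth; the paper's smooth extension requires a genuinely different construction (ODEs in boundary normal coordinates plus Delay's theorem).
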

\begin{proof}
Given $u\in L^2(S^m_{sol}(T^*_g\Omega))$, i.e. $\delta u=0$ in the sense of distributions. By the Green's formula for symmetric tensor fields (cf. \cite{Sh1}) one can define the boundary contraction of $u$ with the outward unit normal vector $\nu$ on $\p\Omega$ in the sense of distributions, i.e. for $v\in H^1(S^{m-1}(T_g^*\Omega))$ we have the following equation
\begin{equation}\label{green}
( u, dv )_{\Omega}=( j_{\nu}u, v)_{\p\Omega}.
\end{equation}
Since the trace operator $T: H^1(S^{m-1}(T_g^*\Omega))\to H^{1/2}(S^{m-1}(\p T_g^*\Omega)),\, Tv=v|_{\p \Omega}$ is surjective, then $j_{\nu}u\in H^{-1/2}(S^{m-1}(\p T_g^*\Omega))$ is well-defined, and in local coordinates 
$$(j_{\nu}u)_{i_1i_2\cdots i_{m-1}}=u_{i_1i_2\cdots i_{m-1}j}\nu^j.$$
By \eqref{green}, for $v\in H^1(S^{m-1}(T_g^*\Omega))$ with $dv=0$ (Killing tensor fields on $\Omega$), we have $( j_{\nu}u ,v )_{\p\Omega}=0$.

It is known that generic (in the $C^K$-topology for $K\geq 2$) metrics admit only trivial integrals polynomial in momenta \cite{KM}, i.e. for a generic metric $h$ the only Killing tensor fields are of the form $c h^k$, where $c\in\mathbb{R}$ and $$h^k=\sigma(h\underbrace{\otimes\cdots\otimes }_{k}h)$$
is the symmetric tensor product of $k$ copies of $h$. Thus given any $\epsilon>0$ and $K\geq 2$, there is a smooth metric $\tilde{g}$ with $\|\tilde{g}-g\|_{C^{K}}<\epsilon$, $\tilde{g}|_{\overline{\Omega}}=g$, so that $(U\backslash\overline{\Omega}, \tilde{g})$ (thus $(U,\tilde g)$) does not have non-trivial Killing tensor fields.

Define 
\begin{equation*}
f=\left\{ \begin{array}{ll}
  -j_{\nu}u \, &\mbox{on} \, \p\Omega,\\
  0 \, &\mbox{on} \, \p U.
  \end{array} \right.
\end{equation*}
Denote $D:=U\backslash\overline{\Omega}$ and consider the following boundary value problem for systems of second order partial differential equations
\begin{equation}\label{system}
\left\{ \begin{array}{lr}
 \delta dw =0 \quad \mbox{in} \quad D, \\
  j_{\mu}dw =f\in H^{-1/2}(S^{m-1}(\p T^*_{\tilde{g}} D)),\\
   w\in H^1(S^{m-1}(T^*_{\tilde{g}}D)).
       \end{array} \right.
\end{equation}
Here $\mu$ is the outward unit normal vector on $\p D$ for $D$, notice $\mu|_{\p\Omega}=-\nu$. 
We claim that the system \eqref{system} is a regular elliptic system (also called coercive in some texts). Assume that the claim is true for the moment and let us continue the proof. 

Next, we study the solutions of the homogeneous problem. Let $\delta dv=0,\, j_{\mu}dv|_{\p D}=0$ for some $v\in H^1(S^{m-1}(T_{\tilde{g}}^*D))$, by ellipticity $v$ is smooth. Applying Green's formula, one has
$$\int_D\<dv,dv\>\,dV^n(x)=-\int_D\<\delta dv,v\>\,dV^n(x)+\int_{\p D}\<j_{\mu}dv,v\>\,dV^n(x)=0,$$
i.e. $dv\equiv 0$. So the solution set of the homogeneous problem is 
$$\mathcal{K}=\{v\in C^{\infty}(S^{m-1}(T^*_{\tilde{g}}D)): dv\equiv 0\},$$
the set of Killing tensor fields of rank $m-1$ on $D$.

Now by \cite[Theorem 4.11]{Mc}, \eqref{system} is solvable in $H^1(S^{m-1}(T^*_{\tilde{g}}D))$ for the given boundary condition $f$ if and only if $(v,f)_{\p D}=0$ for all $v\in \mathcal{K}$. Note that $(D,\tilde{g})$ does not have non-trivial Killing tensor fields. If $m$ is even, the only Killing $(m-1)$-tensor field is $v=0$, then $(v,f)_{\p D}=(0,f)_{\p D}=0$. If $m$ is odd, the Killing $(m-1)$-tensor fields in $D$ are of the form $v=c\tilde{g}^{(m-1)/2}|_D$. Thus we can extend $v$ to $v=c\tilde{g}^{(m-1)/2}|_U$, which is also a Killing tensor field in $\Omega$. By the definition of $f$, 
$$(v,f)_{\p D}=-(v,j_{\nu}u)_{\p\Omega}=-(v,\delta u)_{\Omega}-(dv,u)_{\Omega}=0,$$
since $\delta u=0,\,dv=0$ in $\Omega$.

Thus the system \eqref{system} is solvable. Let $w\in H^1(S^{m-1}(T^*_{\tilde{g}}D))$ be a solution of of \eqref{system} (the set of all solutions is $w+\mathcal{K}$) and define 
\begin{equation*}
\mathcal{E}u=\left\{ \begin{array}{ll}
  u \, &\mbox{in} \, \Omega,\\
  dw \, &\mbox{in} \, D,\\
  0 \, &\mbox{in} \, \mathcal{M}\backslash\overline{U}.
  \end{array} \right.
\end{equation*}
It is easy to see that $\mathcal{E}u\in L^2(S^m(T^*_{\tilde{g}}\mathcal{M}))$ and supp $\mathcal{E}u\subset \overline{U}$. In particular, for $v\in H^1(S^{m-1}(T^*_{\tilde{g}}\mathcal{M}))$
\begin{equation*}
\begin{split}
(\delta\mathcal{E}u, v)_{\mathcal{M}} &=-(\mathcal{E}u, dv)_{\mathcal{M}}=-(dw, dv)_D-(u,dv)_{\Omega}\\
&=-(j_{\mu}dw, v)_{\p D}-(j_{\nu}u,v)_{\p\Omega}\\
&=-(-j_{\nu}u,v)_{\p\Omega}-(j_{\nu}u,v)_{\p\Omega}\\
& =0.
\end{split}
\end{equation*}
Thus $\mathcal{E}u$ is solenoidal in the sense of distributions, and $\mathcal{E}u\in L^2(S^m_{U,sol}(T^*_{\tilde{g}}\mathcal{M}))$.

Moreover, by \cite[Theorem 4.11]{Mc}, we have the following stability estimate
$$\|\mathcal Eu\|^2_{L^2(\mathcal M)}=\|u\|^2_{L^2(\Omega)}+\|dw\|^2_{L^2(D)}\leq \|u\|^2_{L^2(\Omega)}+C\|j_{\nu}u\|^2_{H^{-1/2}(\p\Omega)}\leq C'\|u\|^2_{L^2(\Omega)},$$
i.e. $\mathcal E$ is bounded.
\end{proof}

The only thing left to prove is the claim about ellipticity.

\begin{Lemma}
The system \eqref{system} above is a regular elliptic system.
\end{Lemma}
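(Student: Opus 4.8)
The plan is to cast \eqref{system} in the Agmon--Douglis--Nirenberg framework and verify the two requirements for a regular (coercive) boundary value problem: proper ellipticity of the interior operator $\delta d$ and the Lopatinskii--Shapiro (complementing) condition for the boundary operator $j_\mu d$. I freeze coefficients at an arbitrary point $x_0 \in \partial D$ and choose geodesic normal coordinates for $\tilde g$ in which $\tilde g(x_0)$ is Euclidean, $\partial D = \{x_n = 0\}$ and $D = \{x_n > 0\}$ locally, and $\mu(x_0) = -e_n$. Identifying symmetric $(m-1)$-tensors at $x_0$ with homogeneous polynomials of degree $m-1$, the principal symbol $\sigma_d(\xi)$ of $d$ becomes multiplication by the linear form $\langle \xi, \cdot\rangle$. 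Consistently with the Green's formula \eqref{green}, the principal symbol of $\delta d$ is then $\sigma_{\delta d}(\xi) = -\sigma_d(\xi)^*\sigma_d(\xi)$, so that $\langle -\sigma_{\delta d}(\xi) a, a\rangle = |\sigma_d(\xi) a|^2$. Since multiplication by a nonzero linear form has trivial kernel, $\sigma_{\delta d}(\xi)$ is definite for every real $\xi \neq 0$, which gives ellipticity; moreover $\det \sigma_{\delta d}(\xi', \xi_n)$ is a real polynomial of degree $2N$ in $\xi_n$ (with $N = \dim S^{m-1}$) having no real roots for $\xi' \neq 0$, whose nonreal roots are paired by conjugation, so exactly $N$ lie in each open half-plane. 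Hence $\delta d$ is properly elliptic, and the $N$ scalar conditions supplied by $j_\mu d$ are the correct number.

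For the complementing condition I pass to the tangentially Fourier-transformed model problem: for fixed $\xi' \neq 0$, let $v \in L^2((0,\infty))$ solve $\sigma_{\delta d}(\xi', D_n) v = 0$ with $(\sigma_{j_\mu d}(\xi', D_n) v)|_{x_n = 0} = 0$, and I must show $v \equiv 0$. Pairing the equation with $\bar v$ and integrating over $(0,\infty)$, the integration by parts that turns $\int \langle \sigma_{\delta d}(\xi', D_n) v, v\rangle$ into $-\int |\sigma_d(\xi', D_n) v|^2$ generates exactly the boundary term $\langle \sigma_{j_\mu d}(\xi', D_n) v, v\rangle|_{x_n = 0}$; this is the frozen-coefficient model of Green's formula \eqref{green}, reflecting that $j_\mu d$ is the natural Neumann-type boundary operator of the Dirichlet form $w \mapsto \int_D |dw|^2$. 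Since the boundary data vanish, the identity forces $\sigma_d(\xi', D_n) v = 0$; equivalently the plane wave $w(x) = e^{i\langle x', \xi'\rangle} v(x_n)$ satisfies $dw = 0$ on the half-space for the flat metric.

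To conclude, solutions of $dw = 0$ on a connected open set are Killing tensors of the flat metric, hence restrictions of global flat Killing tensors, which are polynomial in the Cartesian coordinates. A function of the separated plane-wave form $e^{i\langle x', \xi'\rangle} v(x_n)$ with $\xi' \neq 0$ is not polynomial in $x'$ unless it vanishes, so $v \equiv 0$. This is precisely the complementing condition, and together with proper ellipticity it shows that \eqref{system} is a regular elliptic system, so that the solvability and regularity results of \cite{Mc} used in the proof of Proposition \ref{extension} are applicable.

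I expect the main obstacle to be the bookkeeping in the energy identity: one must verify that the integration by parts for the second-order tensorial operator $\delta d$ produces the boundary contribution \emph{precisely} as $\langle j_\mu d w, w\rangle$, with no residual terms, so that the homogeneous condition $j_\mu d w = 0$ annihilates it cleanly. Making this explicit for general rank $m-1$, keeping track of the symmetrization built into $d$ and the normal contraction in $j_\mu$, is the substance of the argument; once it is in place, the definiteness of $\sigma_{\delta d}$ and the non-polynomial nature of plane waves finish the proof at once.
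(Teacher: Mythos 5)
Your proof is correct, and up to the final step it follows the same architecture as the paper's: the paper likewise freezes coefficients at a boundary point $x_0\in\partial D$ with $g_{ij}(x_0)=\delta_{ij}$, quotes the ellipticity of $\delta d$ (which you instead verify directly via the polynomial model for $\sigma_d$ --- a nice touch), observes that $\det(\delta_0(\xi',\zeta)d_0(\xi',\zeta))$ has real coefficients and no real roots for $\xi'\neq 0$ so that $\dim\mathcal{N}_+=\dim S^{m-1}(\mathbb{R}^n)$ matches the number of boundary conditions, and then reduces the homogeneous model problem to $d_0(\xi',D_t)w=0$ by exactly the half-line Green's identity \eqref{Green} that you describe (the ``bookkeeping'' you flag as the main obstacle is carried out there by adapting the computation in the proof of \cite[Theorem 3.3.2]{Sh1}, with the Dirichlet condition replaced by the Neumann one, and it does close up with no residual terms). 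Where you genuinely diverge is in disposing of $d_0(\xi',D_t)w=0$: the paper writes it out as an explicit first-order ODE system,
\[
(\ell+1)D_tw_{i_1\cdots i_{m-1}}+\sum_{i_k\neq n}\xi_{i_k}w_{n i_1\cdots \widehat{i_k}\cdots i_{m-1}}=0,
\]
and kills it by induction on the number $\ell$ of occurrences of the index $n$, using only the decay as $t\to+\infty$; you instead note that the plane wave $e^{i\langle x',\xi'\rangle}v(x_n)$ is a Killing tensor of the flat metric on the half-space, invoke polynomiality of flat Killing tensors on connected open sets, and observe that a nontrivial separated plane wave with $\xi'\neq 0$ is never polynomial. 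Your route is more structural and in the same spirit as the paper's treatment of Killing fields elsewhere, but it leans on the polynomiality of flat Killing tensors, which you should cite or prove (it follows from the standard finite-type prolongation of $dw=0$; see \cite{Sh1}); note also that once polynomiality is granted, the decay of $v\in\mathcal{N}_+$ already forces $v\equiv 0$, so your boundedness argument for plane waves, while correct, is not even needed. The paper's induction, by contrast, is completely self-contained and elementary. Either way, regular ellipticity follows and the solvability results of \cite{Mc} apply as claimed.
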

\begin{proof}
It is well-known that $\delta d$ is a self-adjoint elliptic operator, see for example \cite{Sh1}, we just need to show that the Neumann boundary value problem satisfies the Lopatinskii condition.

To check the Lopatinskii condition, we follow a similar procedure as in the proof of \cite[Theorem 3.3.2]{Sh1}. We choose a local coordinates $(x^1,x^2,\cdots,x^{n-1},$ $x^n=t\geq 0)$ in a neighbourhood $W$ of $x_0=(x',0)\in\p D$ in $D$ so that $\p D\cap W=\{t=0\}$ and $g_{ij}(x_0)=\delta_{ij}$. Denote $d_0=\sigma_p d$ and $\delta_0=\sigma_p\delta$, the principal symbols of $d$ and $\delta$ respectively, then we need to show that the following boundary value problem for systems of ordinary differential equations 
\begin{equation*}
\left\{ \begin{array}{lr}
  \delta_0(x',0,\xi',D_t)d_0(x',0,\xi',D_t)w(t)=0,\\
  j_{-\frac{\p}{\p t}}d_0(x',0,\xi',D_t)w(t)|_{t=0}=f_0
    \end{array} \right.
\end{equation*}
has a unique solution in $\mathcal{N}_+$ for all $\xi'\in \mathbb{R}^{n-1}\backslash \{0\}$ and $f_0\in S^{m-1}(\mathbb{R}^{n})$, symmetric $(m-1)$-tensors on $\mathbb{R}^n$. Here $D_t=-id/dt$, and for the sake of simplicity, we drop the space variables $(x',0)$ from the symbols so
\begin{equation*}
\begin{split}
\mathcal{N}_+:=\{w\in S^{m-1}(\mathbb R^n)|_{\{x'\}\times [0,\infty)}:\, \delta_0(\xi',D_t) & d_0(\xi',D_t)w=0 \,\mbox{and}\, w\, \mbox{decays rapidly}\\ & \mbox{together with all derivatives as}\, t\to +\infty\}.
\end{split}
\end{equation*}
Since the equation det$(\delta_0(\xi',\zeta)d_0(\xi',\zeta))=0$ has real coefficients with no real root for $\xi'\neq 0$, it is not difficult to see that dim$\,\mathcal{N}_+=$dim$\,S^{m-1}(\mathbb{R}^{n})$. Thus it is sufficient to show that the homogeneous problem 
\begin{equation}\label{homogeneous}
\left\{ \begin{array}{lr}
  \delta_0(\xi',D_t)d_0(\xi',D_t)w(t)=0,\\
  j_{-\frac{\p}{\p t}}d_0(\xi',D_t)w(t)|_{t=0}=0
    \end{array} \right.
\end{equation}
has only the zero solution in $\mathcal{N}_+$.

By a similar computation as in the proof of \cite[Theorem 3.3.2]{Sh1}, we have the following Green's formula. Let $v(t)\in C^{\infty}([0,\infty)\to S^{m}(\mathbb{R}^n))$ and $w(t)\in C^{\infty}([0,\infty)\to S^{m-1}(\mathbb{R}^n))$ such that both of them decay rapidly together with all derivatives as $t\to +\infty$. If $j_{-\frac{\p}{\p t}}v(0)=0$ (notice that different from \cite{Sh1}, here we use Neumann boundary condition at $t=0$) then 
\begin{equation}\label{Green}
\int_0^{\infty}\<\delta_0(\xi',D_t)v,w\>\,dt=-\int_0^{\infty}\<v,d_0(\xi',D_t)w\>\,dt.
\end{equation}
Now if $w(t)\in\mathcal{N}_+$ is a solution to \eqref{homogeneous}, let $v(t)=d_0(\xi',D_t)w(t)$, by \eqref{Green} we obtain 
$$d_0(\xi',D_t)w(t)=0.$$
Notice that
$$(d_0(\xi)w)_{i_1\cdots i_m}=\frac{i}{m}\sum_{k=1}^m\xi_{i_k}w_{i_1\cdots \widehat{i_k}\cdots i_m},$$
where the $\wedge$ over $i_k$ means this index is omitted. Let $i_m=n$ and $\xi=(\xi',D_t)$, we obtain the following system of first order ordinary differential equations
$$(d_0(\xi',D_t)w)_{ni_1\cdots i_{m-1}}=\frac{i}{m}\{(\ell+1)D_tw_{i_1\cdots i_{m-1}}+\sum_{i_k\neq n}\xi_{i_k}w_{n i_1\cdots \widehat{i_k}\cdots i_{m-1}}\}=0,$$
where $\ell=\ell(i_1,\cdots, i_{m-1})$ is the number of occurrences of the index $n$ in $(i_1,\cdots, i_{m-1})$. Since $\lim_{t\to +\infty}w(t)=0$, by the induction on $\ell$, the only solution to the above first order homogeneous system is $w\equiv 0$, and this shows that \eqref{system} satisfies the Lopatinskii condition.
\end{proof}

\subsection{Smooth solenoidal extensions} In this subsection we achieve $C^{\infty}$  solenoidal extensions for tensors of arbitrary rank. Observe that the approach we use is quite different from the one of \cite{KMPT}. 

\begin{Proposition}\label{smooth extension}
Let $\Omega$ be a bounded connected domain, with smooth boundary, contained in some Riemannian manifold $(\mathcal{M}, g)$. Let $U$ be an open neighborhood of $\Omega$ with $\p U$ smooth. Then given $m\geq 2$, $K\geq 2$ and $\epsilon>0$, there exist a Riemannian metric $\tilde{g}$ and a bounded map $\mathcal{E}: H^k(S^m_{sol}(T^*_{g}\Omega))\to L^2(S^m_{U, sol}(T^*_{\tilde{g}}\mathcal{M}))$ for some integer $k\geq 2$ such that $\|\tilde{g}-g\|_{C^{K}}<\epsilon$, $\tilde{g}|_{\overline{\Omega}}=g$, $\mathcal{E}|_{\Omega}=Id$ and $\mathcal{E}(C^{\infty}(S^m_{sol}(T^*_g\Omega)))\subset C^{\infty}(S^m_{U,sol}(T^*_{\tilde g}\mathcal{M}))$.
\end{Proposition}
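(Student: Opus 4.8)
The plan is to build $\mathcal E u$ as a compactly supported solenoidal field of the form $\tilde u - q$, where $\tilde u$ is a crude smooth extension of $u$ and $q$ is a solenoidal correction supported away from $\Omega$. Exactly as in the proof of Proposition~\ref{extension}, I first fix, using \cite{KM}, a metric $\tilde g$ with $\tilde g|_{\overline\Omega}=g$ and $\|\tilde g-g\|_{C^K}<\epsilon$ for which $(U\setminus\overline\Omega,\tilde g)$ carries no non-trivial Killing tensor field; write $D:=U\setminus\overline\Omega$.

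The first step is a crude extension. Using a Seeley-type extension of symmetric tensor fields (followed by a cutoff equal to $1$ near $\overline\Omega$) I produce $\tilde u\in H^k(S^m(T^*_{\tilde g}\mathcal M))$, depending boundedly and linearly on $u$, with $\tilde u=u$ on $\overline\Omega$ and $\operatorname{supp}\tilde u\Subset U$; for smooth $u$ this $\tilde u$ is smooth. Because $\delta u=0$ on the open set $\Omega$, the defect $\rho:=\delta\tilde u$ is supported in $\overline D$, and, being the divergence of a field that vanishes on the open sets $\Omega$ and $\mathcal M\setminus\overline U$, it is flat along $\p\Omega$ and along $\p U$. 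It then remains to solve $\delta q=\rho$ by a field $q$ supported in $\overline D$ and flat at $\p D=\p\Omega\cup\p U$; setting $\mathcal Eu:=\tilde u-q$ gives a field which equals $u$ on $\Omega$, is supported in $\overline U$, is solenoidal, and is as regular as $q$.

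The solvability rests on two points. The necessary compatibility condition is that $\rho$ be orthogonal to every Killing $(m-1)$-tensor $v$ on $D$: pairing $\delta q=\rho$ against such a $v$ (so $dv=0$) and integrating by parts, the interior term drops out and one is left with a boundary term on $\p D$. By the choice of $\tilde g$ the only Killing tensors on $D$ are multiples of $\tilde g^{(m-1)/2}$, present only when $m$ is odd; the contribution on $\p U$ vanishes since $\tilde u=0$ there, while the contribution on $\p\Omega$ is $\pm(j_\nu u,\tilde g^{(m-1)/2})_{\p\Omega}$, which vanishes by $\delta u=0$ together with the Green's identity \eqref{green}. Granting compatibility, the honest production of a \emph{smooth, boundary-flat, compactly supported} $q$ is carried out via the local solenoidal extension result, Lemma~\ref{local solenoidal extension}: near each component of $\p D$ it converts the flat data $\rho$ into a local solenoidal filling, and these are combined with an interior elliptic solve of $\delta d p=\rho$ in $D$ (as in Proposition~\ref{extension}, via the same regular elliptic system) to produce $q=dp$ with the required support and flatness. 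The Sobolev bounds follow from the elliptic estimates, which lose a fixed number of derivatives and hence yield a bounded map $H^k(S^m_{sol}(T^*_g\Omega))\to L^2(S^m_{U,sol}(T^*_{\tilde g}\mathcal M))$ for a suitable $k\geq2$, while smooth $u$ produces smooth $\mathcal Eu$.

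The main obstacle is exactly this last point, the local smooth solenoidal extension across a hypersurface (Lemma~\ref{local solenoidal extension}). Unlike the $L^2$ construction of Proposition~\ref{extension}, where only the normal trace $j_\nu u$ had to be matched across $\p\Omega$ and a single Neumann system sufficed, here one must fill the divergence while matching \emph{all} normal derivatives and keeping $\delta=0$, an underdetermined first-order constraint; producing a filling that is simultaneously smooth, solenoidal and flat at the interface is the delicate analytic heart of the argument. The compatibility with Killing tensors is only a necessary condition and does not by itself furnish such a filling.
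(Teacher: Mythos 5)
Your skeleton --- a crude extension of $u$, followed by a solenoidal correction supported in $D=U\setminus\overline{\Omega}$, with the compatibility against Killing $(m-1)$-tensors secured by the generic-metric perturbation from \cite{KM} --- is the same as the paper's, and your compatibility computation (trivial for $m$ even, pairing with $c\tilde g^{(m-1)/2}$ and using Green's formula plus $\delta u=0$ for $m$ odd) is essentially the paper's verbatim. But there is a genuine gap at exactly the point you yourself flag as ``the delicate analytic heart'': you never actually produce the smooth, compactly supported $q$ with $\delta q=\rho$, and neither of the two tools you propose can do it. Lemma \ref{local solenoidal extension} does not ``convert flat data $\rho$ into a local solenoidal filling'': it extends an \emph{already solenoidal} tensor across a hypersurface by solving ODEs for the missing components, i.e.\ it solves $\delta\tilde u=0$ with prescribed Cauchy data; it is not a tool for solving $\delta q=\rho$ with prescribed boundary flatness. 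And the Neumann elliptic system of Proposition \ref{extension} matches only the single trace $j_{\mu}dp$ on $\partial D$, so a solution $q=dp$ of $\delta dp=\rho$ obtained that way will in general not be flat at $\partial D$; extending it by zero then yields only a distributionally solenoidal $L^2$ field --- which is precisely why Proposition \ref{extension} lives in the $L^2$ category and a different mechanism is needed here. The ingredient the paper actually uses, and which your argument is missing, is Delay's theorem \cite[Theorem 1.3]{De} on smooth compactly supported solutions of the underdetermined elliptic equation $\delta u_D=-f$ (after checking his Kernel Restriction Condition and Asymptotic Poincar\'e Inequality for $d$); no finite-order boundary matching by an elliptic boundary value problem can substitute for it.

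Moreover, even if you inserted Delay's theorem, your ordering creates a second problem: with a Seeley extension the defect $\rho=\delta\tilde u$ has support reaching $\partial\Omega$, and mere flatness there is not enough for Delay's \emph{exponentially} weighted spaces (one needs roughly $\psi^{-2}\rho\in L^2_{\psi}$ with $\psi$ vanishing exponentially at $\partial D$, and a function flat at the boundary need not decay exponentially). The paper's proof avoids this by applying Lemma \ref{local solenoidal extension} \emph{first}: it extends $u$ solenoidally to a strictly larger neighborhood $V$ with $\overline{\Omega}\subset V\subset\overline V\subset U$, and only then cuts off to get $w$ with $\operatorname{supp}w\subset U$, so that $f=\delta w$ is supported in $U\setminus V$, at positive distance from $\partial\Omega$, where all weighted conditions hold trivially. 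With this reordering --- local solenoidal extension to $V$, smooth cutoff, then Delay --- your compatibility check and the stability chain (ODE estimates for the local extension giving $\|w\|_{H^1(U)}\leq C\|u\|_{H^k(\Omega)}$, Delay's weighted estimate giving $\|u_D\|_{L^2(D)}\leq C\|w\|_{H^1(U)}$, then density of $C^{\infty}$ in $H^k$) go through exactly as in the paper.
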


To prove the proposition, we start with the following lemma on the existence of solenoidal extensions that might not be compactly supported.

\begin{Lemma}\label{local solenoidal extension}
Let $\Omega$ be a bounded connected domain, with smooth boundary, contained in some Riemannian manifold $(\mathcal{M}, g)$. There exists an open neighborhood $U$ of $\Omega$ such that every $u\in C^{\infty}(S^m_{sol}(T^*\overline{\Omega}))$ can be extend to $\tilde u\in C^{\infty}(S^m_{sol}(T^*U))$ with $\tilde u|_{\overline{\Omega}}=u$.
\end{Lemma}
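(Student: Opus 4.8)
The plan is to construct $\tilde u$ directly across $\p\Omega$ in a collar neighborhood, exploiting the fact that the solenoidal condition is a non-characteristic evolution system in the normal direction. Fix a collar $\mathcal C \cong \p\Omega\times(-\epsilon,\epsilon)$ with boundary normal coordinates $(x',t)$, so that $\Omega\cap\mathcal C=\{t<0\}$, $t$ is the signed distance, and $g=dt^2+g_t$. In these coordinates $g^{nn}=1$ and $g^{n\alpha}=0$, so the equation $\delta\tilde u=0$ reads, for each tangential symmetric multi-index $I=(i_1,\dots,i_{m-1})$,
\begin{equation*}
\p_t\tilde u_{i_1\cdots i_{m-1}n}=-g^{\alpha\beta}\p_\alpha\tilde u_{I\beta}+(\text{zeroth-order terms}),
\end{equation*}
the zeroth-order terms being built from the Christoffel symbols (equivalently, the second fundamental form of the level sets of $t$). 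The structural point is that the equation indexed by $I$ solves for the normal derivative of the component $\tilde u_{In}$, which carries exactly one more normal index than $I$; hence, ordering the components of $\tilde u$ by their number of normal indices (their \emph{normal degree}), the system is triangular, each stage being a first-order linear ODE in $t$ (with $x'$ as a smooth parameter) for the components of a given normal degree. In particular the purely tangential components, which never appear differentiated in $t$, are free data.

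Accordingly I would proceed as follows. Choose any smooth extension $U_0$ of $u$ to $\mathcal C$, and declare the purely tangential components of $\tilde u$ to equal those of $U_0$ for $t\ge0$; their full normal jets at $t=0$ then agree with those of $u$. Next, solving the triangular system stage by stage in increasing normal degree, with Cauchy data at $t=0$ given by the corresponding components of $u$, determines the remaining components of $\tilde u$ on $\{t\ge 0\}$. Each stage is a linear ODE with smooth coefficients and smooth source (tangential derivatives of already-determined quantities), so the solution exists and is smooth on the whole collar. By construction $\tilde u$ is solenoidal on $\{t>0\}$ and agrees with $u$ along $\{t=0\}$.

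The crux is to verify that $\tilde u$ and $u$ have the same infinite-order normal jet at $\p\Omega$, which is what allows the two pieces to glue to a $C^\infty$ tensor; this is precisely where the hypothesis $\delta u=0$ enters. Differentiating the evolution equations repeatedly in $t$ expresses every $\p_t^k\tilde u(\cdot,0)$ as a universal expression in the $t=0$ jets of the tangential components and in tangential derivatives of the Cauchy data of lower normal degree. Since $u$ solves $\delta u=0$ on $\{t\le0\}$ and is smooth up to $\p\Omega$, its normal derivatives at $t=0$ obey exactly the same relations with the same input data; an induction on normal degree and on the order of differentiation then gives $\p_t^k\tilde u(\cdot,0)=\p_t^k u(\cdot,0)$ for all $k$. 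Therefore the tensor equal to $u$ on $\overline\Omega$ and to $\tilde u$ on $\{0<t<\epsilon\}$ is smooth and solenoidal on $U:=\Omega\cup\mathcal C$. Since $N=\p_t$, and hence the normal/tangential splitting, the evolution system, and the choice of free data, are all globally defined over $\p\Omega$, this produces a single smooth solenoidal extension on $U$, as required.

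I expect the main obstacle to be organizing the jet-matching induction: one must track how the triangular coupling and the second-fundamental-form terms intertwine components of different normal degree under repeated $\p_t$-differentiation, and confirm that the resulting universal expressions depend only on data already matched to $u$. Everything else — existence and smoothness of the ODE solutions, and the Borel/Seeley-type extension of the free tangential components — is routine.
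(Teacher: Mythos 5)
Your proposal is correct and is essentially the paper's own proof (which follows \cite[Lemma 4.1]{SU1}): the paper likewise works in semigeodesic/boundary normal coordinates, extends the purely tangential components freely, solves the equations $\delta\tilde u=0$ as a triangular first-order linear ODE system by induction on the number of normal indices with Cauchy data taken from $u$ at $\p\Omega$, and then matches the infinite-order normal jet at the boundary by induction using the fact that $u$ itself is solenoidal, before gluing. The only cosmetic difference is that you phrase the construction globally in a collar while the paper argues in local charts, the construction being canonical so the two formulations coincide.
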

\begin{proof}
Given $u\in C^{\infty}(S^m_{sol}(T^*\overline{\Omega}))$ i.e. $\delta u=0$, in local coordinates $u=u_{j_1\cdots j_m}dx^{j_1}\otimes\cdots\otimes dx^{j_m}$ and 
\begin{equation}\label{divergence}
(\delta u)_{i_1\cdots i_{m-1}}=g^{jk}\nabla_j u_{ki_1\cdots i_{m-1}}=0,
\end{equation}
where 
\begin{equation}\label{nabla}
\nabla_j u_{ki_1\cdots i_{m-1}}=\p_j u_{ki_1\cdots i_{m-1}}-\Gamma^{\ell}_{jk}u_{\ell i_1\cdots i_{m-1}}-\sum_{s=1}^{m-1}\Gamma^{\ell}_{ji_s}u_{\ell k i_1\cdots \hat{i_s}\cdots i_{m-1}}.
\end{equation}
Pick $x_0\in \p \Omega$, we follow the idea of the proof in \cite[Lemma 4.1]{SU1} and choose semigeodesic coordinates $(x^1,\cdots x^{n-1},x^n)=(x',x^n)$ near $x_0$ with $\p \Omega=\{x^n=0\}$ and $\p_n=\nu$ the unit outward (with respect to $\Omega$) vector normal to $\p \Omega$; thus 
$$g^{kn}=\delta^k_{n},\quad \Gamma^{n}_{kn}=\Gamma^{k}_{nn}=0,\quad \forall\, k=1,2,\cdots,n.$$

We extend the components $u_{j_1\cdots j_m}$, $j_s< n,\,\forall\, 1\leq s\leq m$, smoothly to $U$ (note that $U\backslash\overline{\Omega}$ is determined by the semigeodesic neighborhood of $\p\Omega$), and denote the extensions by $v_{j_1\cdots j_m}$. We will construct the other components in $\{x^n>0\}$ by induction on the number of appearances of $n$ in $j_1\cdots j_m$. By equations \eqref{divergence} and \eqref{nabla} if $i_1,\cdots,i_{m-1}< n$
\begin{equation}\label{occur once}
\begin{split}
\p_n & v_{n i_1\cdots i_{m-1}}-\sum_{s=1}^{m-1}\sum_{\ell<n}\Gamma_{ni_s}^{\ell}  v_{\ell n i_1\cdots \hat{i_s}\cdots i_{m-1}} -\sum_{j,k<n}g^{jk}\bigg(\Gamma^n_{jk}v_{n i_1\cdots i_{m-1}}+\sum^{m-1}_{s=1}\Gamma^n_{ji_s}v_{nki_1\cdots \hat{i_s}\cdots i_{m-1}}\bigg)\\
&=-\sum_{j,k<n}g^{jk}\bigg(\p_j v_{ki_1\cdots i_{m-1}}-\sum_{\ell<n}\Gamma^{\ell}_{jk}v_{\ell i_i\cdots i_{m-1}}-\sum_{s=1}^{m-1}\sum_{\ell<n}\Gamma^{\ell}_{ji_s}v_{\ell k i_1\cdots \hat{i_s}\cdots i_{m-1}}\bigg).
\end{split}
\end{equation}
Notice that the right side of \eqref{occur once} is known, so it gives a system of first order linear ODEs. Given the initial values $u_{ni_1\cdots i_{m-1}}(x',0)=v_{ni_1\cdots i_{m-1}}(x',0)$, there exists a unique solution to \eqref{occur once}. Thus we obtain continuous $v_{ni_1\cdots i_{m-1}}$ with $i_1,\cdots,i_{m-1}<n$ near $\p M$. In particular, $v_{ni_1\cdots i_{m-1}}(x',x^n)$ depends smoothly on $x'$, the first $n-1$ variables.

By differentiating \eqref{occur once} repeatedly with respect to $x^n$, we get that $\p_n^s v_{ni_1\cdots i_{m-1}}(x',x^n)$, $s\geq 0$ are continuous in $x^n\geq 0$ and smooth with respect to $x'$. Moreover, by \eqref{occur once} and the fact that $u$ is solenoidal we carry out an induction on $s$, so
\begin{equation*}
\begin{split}
& \p_n^s v_{ni_1\cdots i_{m-1}}(x',0)\\
= & G^s_{i_1\cdots i_{m-1}}(\p_n^{\ell} v_{nj_1\cdots j_{m-1}},\p_n^{\ell} v_{j_1\cdots j_m},\p_n^{\ell}\p_{k}v_{j_1\cdots j_m}; \ell<s; j_1,\cdots,j_{m-1},j_m,k<n)(x',0)\\
= & G^s_{i_1\cdots i_{m-1}}(\p_n^{\ell} u_{nj_1\cdots j_{m-1}},\p_n^{\ell} u_{j_1\cdots j_m},\p_n^{\ell}\p_{k}u_{j_1\cdots j_m}; \ell<s; j_1,\cdots,j_{m-1},j_m,k<n)(x',0)\\
= & \p_n^s u_{ni_1\cdots i_{m-1}}(x',0),
\end{split}
\end{equation*}
for all $s\geq 0$, i.e. $\p_n^s v_{ni_1\cdots i_{m-1}}$ are consistent with $\p_n^s u_{ni_1\cdots i_{m-1}}$ at $(x',0)$.

Next by induction on the number of appearances of $n$ and repeatedly using equations \eqref{divergence} and \eqref{nabla}, one can get unique $$v_{ni_1\cdots i_{m-1}},\, v_{nni_1\cdots i_{m-2}},\, \cdots,\, v_{n\cdots ni_1},\, v_{n\cdots n},$$ which together with their normal derivatives with respect to $x^n$ of all orders are continuous (smooth with respect to $x'$) and consistent with the corresponding $\p_n^m u_{j_1\cdots j_m}$ at $(x',0)$. Therefore we get a smooth solenoidal $m$-tensor
\begin{equation*}
\tilde u=\left\{ \begin{array}{ll}
  u \, &\mbox{on} \quad \overline{\Omega},\\
  v \, &\mbox{on} \quad U\backslash \overline{\Omega}.
  \end{array} \right.
\end{equation*}  
\end{proof}

\begin{proof}[Proof of Proposition \ref{smooth extension}] There exist two precompact open neighborhoods $V$, $U$ of $\Omega$ which satisfy 
$$\Omega\subset \overline{\Omega}\subset V\subset \overline V\subset U\subset \overline U\subset \mathcal M.$$
Given $u\in C^{\infty}(S^m_{sol}(T^*\Omega))$, by Lemma \ref{local solenoidal extension}, we can extend $u$ to get $u_V\in C^{\infty}(S^m_{sol}(T^*V))$ with $u_V|_{\overline{\Omega}}=u$. Then we extend $u_V$ to a smooth $m$-tensor $w$ on $\mathcal M$ with supp$\,w\subset U$. Let $f=\delta w$ and $D=U\backslash \overline{\Omega}$ open, so supp$\,f\subset U\backslash V\subset D$. 

Similar to the perturbation of metrics argument in the proof of Proposition \ref{extension}, given any $\epsilon>0$ and $K\geq 2$, there is a smooth metric $\tilde{g}$ with $\|\tilde{g}-g\|_{C^{K}}<\epsilon$, $\tilde{g}|_{V}=g$, so that $(D, \tilde{g})$ does not have non-trivial Killing tensor fields. Now if $m$ is even, the only Killing $(m-1)$-tensor field on $(D,\tilde g)$ is $v=0$, then 
$$(v,f)_{D}=(0,f)_{D}=0.$$
If $m$ is odd, Killing $(m-1)$-tensor fields on $(D, \tilde g)$ are of the form $v=c\tilde{g}^{(m-1)/2}|_D$. Thus we can extend $v$ to $v=c\tilde{g}^{(m-1)/2}|_U$, which is also a Killing tensor field in $\Omega$. By the Green's formula, 
$$(v,f)_{D}=(v,\delta w)_D=-(dv,w)_D+(v,j_{\mu}w)_{\p D}=-(v,j_{\nu}u)_{\p\Omega}=-(v,\delta u)_{\Omega}-(dv,u)_{\Omega}=0,$$
since $\delta u=0,\,dv=0$ in $\Omega$. Here $\mu=-\nu$ is the unit outward normal vector on $\p D$ and
$$(j_{\mu}w)_{i_1i_2\cdots i_{m-1}}=w_{i_1i_2\cdots i_{m-1}j}\mu^j.$$

Now by \cite[Theorem 1.3]{De}, there exist $u_D\in C^{\infty}(S^m(T^*\mathcal M))$ with supp$\,u_D\subset U\backslash \Omega$, such that $\delta u_D=-f$. It is not difficult to check that the symmetric differentiation $d$ satisfies the Kernel Restriction Condition (KRC) and the Asymptotic Poincar\'e Inequality (API) of \cite{De}.  We define $\mathcal E u=w+u_D$, then $\delta\mathcal E u=\delta w+\delta u_D=f-f=0$, i.e. $\mathcal E u\in C^{\infty}(S^m_{U,sol}(T^*_{\tilde g}\mathcal M))$. Moreover, $\mathcal E u|_{\Omega}=u$. 

The argument above gives a construction for compactly supported smooth solenoidal extensions. One can further check that the extension can be constructed in a stable way. In view of the ODEs \eqref{occur once}, the solution is controlled by the initial value and the nonhomogeneous term on the right side under Sobolev norms, see e.g. \cite{Han}. By induction on the number of appearances of $n$ and repeatedly differentiating \eqref{occur once}, we have that
$$\|u_V\|_{H^{1}(V\backslash \overline{\Omega})}\leq C(\|j_{\mu}u\|_{H^{k_1}(\p \Omega)}+\sum_{i_s<n}\|(u_V)_{i_1\cdots i_m}\|_{H^{k_2}(V\backslash \overline{\Omega})})$$
for some $k_1, k_2\geq 1$. Note that in boundary normal coordinates $\mu=-\p_n$, and we have full freedom to control the elements $(u_V)_{i_1\cdots i_m},\, i_s<n,\,\forall 1\leq s\leq m$ by $u|_{\Omega}$ due to the fact that $\delta$ is an underdetermined elliptic operator. Thus 
$$\|u_V\|_{H^{1}(V\backslash \overline{\Omega})}\leq C \|u\|_{H^{k}(\Omega)}$$
for some integer $k\geq 2$. Then $\|w\|_{H^{1}(U)}\leq C\|u\|_{H^{k}(\Omega)}$ by extending $u_V$ to $w$ in a stable way.

Next we control the $L^2$ norm of $u_D$. Roughly speaking, $u_D$ is the symmetric differentiation of some smooth $m-1$ tensor $p$, multiplied by a smooth nonnegative weight which vanishes exponentially at the boundary of $D$, concretely $u_D=\psi^2\phi^2dp$ with $\phi$ a boundary defining function on $D$ and $\psi$ vanishes exponentially at the boundary $\p D$. By \cite[Lemma 10.2]{De}, 
$\|p\|_{H^2_{\phi,\psi}(D)}\leq C \|\psi^{-2}\delta w\|_{L^2_{\psi}(D)}$, where $H^2_{\phi,\psi}$ and $L^2_{\psi}$ are some weighted Sobolev spaces, see \cite{De} for more details. Then one can check that the following inequality with unweighted Sobolev norms holds
$$\|u_D\|_{L^2(D)}\leq C \|w\|_{H^1(U)}.$$
Now we combine the estimates above to obtain 
$$\|\mathcal E u\|_{L^2(\mathcal M)}\leq C_1(\|w\|_{L^2(U)}+\|u_D\|_{L^2(D)})\leq C_2\|w\|_{H^1(U)}\leq C\|u\|_{H^k(\Omega)}.$$
 for some $C>0$ independent of $u$. Since $C^{\infty}(S^m_{sol}(T^*\Omega))$ is dense in $H^k(S^m_{sol}(T^*\Omega))$ under the $H^k$ norm, we can extend $\mathcal E$ to a bounded map from $H^k$ to $L^2$ with the same properties, which completes the proof.
\end{proof}

\begin{Remark}
{\rm We expect that the $L^2$ norm of $\mathcal E u$ can be bounded by the $L^2$ norm of $u|_{\Omega}$ through sharper estimates, similar to the result under the $L^2$ setting in Section 3.1. However, the $H^k$ space is enough for carrying out the argument under the smooth setting in the next section, see Lemma \ref{smooth normal}.}
\end{Remark}



\section{Surjectivity of the normal operator $I_m^*I_m$}

Since $M$ is simple we can consider an extension $\widetilde{M}$ of $M$ which is open ($\tM=\tM^{int}$) and whose compact closure is also simple. It is well-known that the normal operator $N=I_m^*I_m$ is a pseudodifferential operator of order $-1$ on $\tM$, see for example \cite{Sh1, SU04,SSU,SU08}. Below is a lemma that roughly speaking gives a right parametrix for $N$ on the space of solenoidal tensor fields. The proof is similar to \cite[Theorem 3.1]{SSU}.
\begin{Lemma}\label{partial parametrix}
Let $S$ be a parametrix for the operator $\delta d$. There exists a pseudodifferential operator $Q$ of order $1$ on the bundle of symmetric $m$-tensor fields, $S^m(T^*\tM)$, such that 
\begin{equation}\label{parametrix}
E=NQ+dS\delta+K,
\end{equation} 
where $E$ is the identity operator and $K$ is a smoothing operator.
\end{Lemma}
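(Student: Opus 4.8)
The plan is to construct a parametrix for $N = I_m^*I_m$ by exploiting its ellipticity on solenoidal tensors, following the standard approach for elliptic $\Psi$DOs while carefully handling the fact that $N$ is only elliptic modulo the potential (non-solenoidal) directions. The starting point is the microlocal structure of $N$: it is a $\Psi$DO of order $-1$ on the extended manifold $\tM$, and its principal symbol $\sigma(N)(x,\xi)$ is known (see \cite{SSU,SU04}) to be positive semidefinite, with kernel exactly the ``potential'' directions — those tensors of the form $\sigma(\eta \otimes \cdots)$ built from the covector $\xi$. Concretely, on the orthogonal complement of these potential directions (the solenoidal directions in the symbol), $\sigma(N)$ is positive definite and homogeneous of degree $-1$. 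The aim is to invert $N$ there while absorbing the potential part into a $dS\delta$ term.

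\medskip

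First I would analyze the principal symbol: at each $(x,\xi)$ with $\xi \neq 0$, decompose the fibre $S^m(T_x^*\tM)$ into the image of the symbol of $d$ (i.e. $\sigma(d)(x,\xi)$ applied to $S^{m-1}$) and its complement. Since $\delta d$ is elliptic, its symbol $\sigma(\delta d)(x,\xi)$ is an isomorphism on the appropriate complement, giving a parametrix $S$ with $\sigma(S)$ of order $-2$. On the solenoidal part of the symbol, $\sigma(N)$ is invertible; let $q_{-1+2}(x,\xi)$ denote the inverse of $\sigma(N)$ restricted there (composed with the solenoidal projection), homogeneous of order $+1$. I would then let $Q$ be a $\Psi$DO of order $1$ whose principal symbol is this $q$, extended by zero on the potential directions. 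The key symbolic identity to verify is that
\begin{equation*}
\sigma(NQ) + \sigma(dS\delta) = \mathrm{Id}
\end{equation*}
at the level of principal symbols, where $\sigma(dS\delta) = \sigma(d)\sigma(S)\sigma(\delta)$ is precisely the projection onto the potential directions and $\sigma(NQ)$ is the projection onto the solenoidal directions. Because these two projections are complementary and sum to the identity on each fibre, the leading-order term of $E - NQ - dS\delta$ vanishes.

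\medskip

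Next I would upgrade this principal-symbol identity to a full parametrix by the standard iterative correction. Set $R_1 = E - NQ_0 - dS\delta$ where $Q_0$ has the principal symbol above; then $R_1$ is a $\Psi$DO of order $-1$ (its top symbol vanishes by the previous step). I would solve away the lower order terms by successively correcting $Q$ and $S$: choosing lower order terms in the symbol expansions of $Q$ and $S$ so that the remainder drops in order at each stage, then forming the asymptotic sum $Q \sim \sum Q_j$, $S \sim \sum S_j$ via Borel summation. This produces operators $Q$ (order $1$) and $S$ (a parametrix for $\delta d$) with $E - NQ - dS\delta = K$ where $K$ is of order $-\infty$, i.e. smoothing. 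Because all constructions are on the closed extended manifold $\tM$ whose closure is simple, the composition calculus and asymptotic summation are available without boundary complications.

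\medskip

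\textbf{The main obstacle} I expect is the careful bookkeeping of the two complementary projections at the symbol level and ensuring that the correction terms respect the splitting — in particular, that the potential part is genuinely annihilated by $N$ up to smoothing so that the $dS\delta$ term absorbs exactly the kernel of $\sigma(N)$. One must verify that $N$ commutes appropriately with $d$ and $\delta$ to the needed order (since $I_m \circ d = 0$ on the right by the fundamental theorem of the range, modulo boundary terms that vanish on $\tM$ because we work on the open extension), so that the cross terms in composing $NQ$ with $dS\delta$ are lower order. This structural compatibility between $N$ and the Hodge-type decomposition $\mathrm{Id} = (\text{solenoidal proj}) + (\text{potential proj})$ is precisely what \cite[Theorem 3.1]{SSU} provides, and I would invoke that ellipticity-on-solenoidal-tensors result to close the argument.
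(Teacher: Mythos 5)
Your overall strategy --- invert the principal symbol $\lambda(\xi)$ of $N$ on the solenoidal subspace $S^m_{\xi}(T_x^*\tM)=\ker j_{\xi}$, observe that the principal symbol of $dS\delta$ is the complementary projection onto the potential directions $\mathrm{im}\, i_{\xi}$, and then iterate away lower-order errors --- coincides with the paper's at top order (the paper invokes \cite[Theorem 2.12.1]{Sh1} for the invertibility of $\lambda(\xi)$ on $S^m_{\xi}$, following \cite[Theorem 3.1]{SSU}). But there is a concrete flaw in your iteration: you propose to correct the lower-order terms of $S$ as well as of $Q$. This is both illegitimate and unnecessary. Illegitimate, because the lemma fixes $S$ in advance as a parametrix of $\delta d$: if you add to $S$ a correction $S_1$ of order $-3$, then $\delta d(S+S_1)-E$ contains the order $-1$ term $\delta d S_1$, so $S+S_1$ is no longer a parametrix of $\delta d$ and you would not be proving the stated identity. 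And the reason you were driven to correct $S$ is precisely the unresolved point you flag at the end: at stage $j$ the potential component of the remainder's symbol has the form $i_{\xi}w$, and such a term can be absorbed into a $d(\cdot)\delta$ block only if $w$ factors through $\sigma(\delta)$ on the right, which is not automatic. As written, the induction does not close.

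The repair is to make structural the fact you invoke only loosely: $\delta N$ is smoothing ($N$ maps into solenoidal tensors modulo $\Psi^{-\infty}$, dual to $I_m\circ d=0$ on compactly supported potentials), and $\delta d S=E+\Psi^{-\infty}$ since $S$ is a parametrix. Hence $\delta R_1=\delta-\delta N Q_0-\delta d S\delta\in\Psi^{-\infty}$, and inductively $\delta R_j\in\Psi^{-\infty}$, since each step changes the remainder by $NQ_j$. Consequently every homogeneous term of the full symbol of each remainder takes values in $\ker j_{\xi}$, where $\lambda(\xi)$ is invertible, so all errors can be removed by corrections to $Q$ alone, with $S$ untouched. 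The paper packages this same mechanism algebraically rather than symbol-by-symbol: it constructs $P$ of order $1$ with $NP=E-B$ on solenoidal symbols, multiplies by the projection $E-dS\delta$ to get $NP(E-dS\delta)=E-dS\delta-R$, observes $\delta R\in\Psi^{-\infty}$ exactly as above, takes the Neumann-series parametrix $C\sim\sum_{k}R^k$ for $E-R$, and notes that $dS\delta\sum_{k\geq 1}R^k$ is smoothing precisely because $\delta R$ is; then $Q=P(E-dS\delta)C$ (made properly supported). Your degree-by-degree scheme, amended as indicated and with the corrections confined to $Q$, proves the same statement; the Neumann series simply replaces your term-by-term asymptotic summation. (A minor point: $\tM$ is an open manifold with simple compact closure, not a closed manifold, which is why proper supports are mentioned.)
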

\begin{proof}
Let $\lambda(\xi)$ be the principal symbol of the pseudodifferential operator $N$, and 
$$S_{\xi}^m(T_x^*\tM)=\{u\in S^m(T^*_x\tM): j_{\xi}u=0\},$$
where $j_{\xi}=-i\sigma_p(\delta): S^m(T^*_x\tM)\to S^{m-1}(T^*_x\tM)$. By \cite[Theorem 2.12.1]{Sh1}, 
$$\lambda(\xi): S_{\xi}^m(T^*_x\tM)\to S_{\xi}^m(T^*_x\tM)$$
is an isomorphism for $\xi\neq 0$. Thus there exists $p(\xi)$ such that $\lambda(\xi)p(\xi)=Id$ on $S_{\xi}^m(T^*_x\tM)$. Namely, we can find some pseudodifferential operator $P$ of order $1$, such that on $S_{\xi}^m(T^*_x\tM)$, 
$$NP=E-B$$
for some operator $B$ of order $-1$. Now multiplying both sides with the `solenoidal projection' $E-dS\delta$, which is of order $0$, one has
\begin{equation}\label{order -1}
NP(E-dS\delta)=E-dS\delta-R
\end{equation}
defined on $S^m(T^*\tM)$.

Then we multiply by $\delta$ both sides of \eqref{order -1} to get $\delta R=R'$ with $R'$ some smoothing operator. Let $C=\sum_{k=0}^{\infty}R^k$, it is a pseudodifferential operator of order $0$ and a parametrix for $E-R$. Write \eqref{order -1} as 
$$NP(E-dS\delta)+dS\delta=E-R,$$
and multiply by $C$ both sides to get 
$$NP(E-dS\delta)C+dS\delta+dS\delta\sum_{k=1}^{\infty}R^k=(E-R)C=E+R'',$$
with $R''$ a smoothing operator.
Since $\delta R$ is smoothing, $dS\delta\sum_{k=1}^{\infty}R^k$ is smoothing too. We arrive at the following equation
$$NP(E-dS\delta)C+dS\delta+K=E,$$
where $K$ is a smoothing operator. Denote $P(E-dS\delta)C$ by $Q$ (note that one can make $Q$ properly supported), we get \eqref{parametrix}, which finishes the proof.
\end{proof}

Let $U$ be a small open neighborhood of $M$ in $\tM$. Denote the restriction operator from $\tM$ to $M$ by $r_M$, then the following holds

\begin{Lemma}\label{normal}
Let $M$ be a compact simple Riemannian manifold. Assume $I_m$ is s-injective on $C^{\infty}(S^m(T^*M))$, then the operator
$$r_M N: H^{-1}_c(S^m(T^*\tM))\to L^2(S^m_{sol}(T^*M))$$
is surjective.
\end{Lemma}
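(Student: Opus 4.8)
The plan is to show that $r_M N$ has closed range of finite codimension, via the parametrix of Lemma \ref{partial parametrix} together with a compactly supported solenoidal extension, and that this range is dense, via the self-adjointness of $N$ and the s-injectivity hypothesis. Since a closed dense subspace of a Hilbert space is the whole space, this will give surjectivity.

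For the closed-range part, fix a neighbourhood $U$ with $\overline M\subset U\subset \tM$ and let $\mathcal E:L^2(S^m_{sol}(T^*M))\to L^2(S^m_{U,sol}(T^*\tM))$ be the solenoidal extension of Proposition \ref{extension} (the small metric perturbation it requires is absorbed into the choice of the simple extension $\tM$, whose metric is kept equal to $g$ on $\overline M$; for $m\le 1$ the extension is classical). Given $u\in L^2(S^m_{sol}(T^*M))$, I would apply \eqref{parametrix} to $\mathcal E u$: since $\delta\mathcal E u=0$ the term $dS\delta\mathcal E u$ vanishes, leaving $NQ\mathcal E u=\mathcal E u-K\mathcal E u$. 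Restricting to $M$ and using $r_M\mathcal E u=u$ gives
$$
r_M N\,(Q\mathcal E u)=u-r_M K\mathcal E u .
$$
Because $Q$ is properly supported of order $1$, we have $Q\mathcal E u\in H^{-1}_c(S^m(T^*\tM))$, so the left side lies in $\operatorname{range}(r_M N)$; and since $N=I_m^*I_m$ has solenoidal range, it is solenoidal on $M$. Hence $A:=r_M N Q\mathcal E$ maps $L^2(S^m_{sol}(T^*M))$ into itself and equals $\mathrm{Id}-C$ with $C:=r_M K\mathcal E$. As $K$ is smoothing and restriction $H^1(\tM)\to L^2(M)$ is compact (Rellich), $C$ is compact, so $A$ is Fredholm of index $0$ and $\operatorname{range}(A)$ is closed of finite codimension. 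Since $\operatorname{range}(A)\subset\operatorname{range}(r_M N)$, the latter is closed of finite codimension as well.

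For density, suppose $w\in L^2(S^m_{sol}(T^*M))$ is orthogonal to $\operatorname{range}(r_M N)$, and let $\bar w\in L^2_c(S^m(T^*\tM))$ be its extension by zero, supported in $\overline M$. Testing the orthogonality against $h=\bar w\in H^{-1}_c$ and using that $\bar w$ is supported in $\overline M$ and that $N$ is self-adjoint,
$$
0=(r_M N\bar w,\,w)_{L^2(M)}=(N\bar w,\bar w)_{L^2(\tM)}=\|I_m\bar w\|^2_{L^2_\mu},
$$
so $I_m\bar w=0$. Now every maximal geodesic of $M$ is the portion inside $M$ of a maximal geodesic of $\overline{\tM}$ (by strict convexity of $\p M$ and the agreement of the metrics on $\overline M$, this portion is a single chord), and $\bar w$ vanishes off $\overline M$; therefore the ray transform of $w$ over $M$ vanishes, $I_m^M w=0$. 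Since $w$ is solenoidal, the s-injectivity hypothesis forces $w=0$. Thus $\operatorname{range}(r_M N)$ is dense, and being also closed it is all of $L^2(S^m_{sol}(T^*M))$.

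The main obstacle is supplied by the inputs we invoke: the compactly supported \emph{solenoidal} extension $\mathcal E$ of Proposition \ref{extension} is exactly what makes the potential term $dS\delta\mathcal E u$ drop out and drives the Fredholm reduction. The remaining delicate point is the bookkeeping of metric perturbations — ensuring that a single simple extension $\tM$, with metric unchanged on $\overline M$, simultaneously supports $\mathcal E$ and leaves the geodesics through $M$ unchanged, so that the vanishing of $I_m\bar w$ on $\overline{\tM}$ genuinely descends to $I_m^M w=0$ and lets the s-injectivity of $M$ alone (not of $\tM$) close the argument.
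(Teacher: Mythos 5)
Your argument is essentially the paper's proof. The Fredholm reduction is identical: apply Lemma \ref{partial parametrix} to the compactly supported solenoidal extension of Proposition \ref{extension} (with the same metric perturbation off $\overline{M}$, keeping $\tM$ simple and $g$ unchanged on $\overline{M}$), so that $r_M N Q\mathcal{E}=\mathrm{Id}-r_MK\mathcal{E}$ with $r_MK\mathcal{E}$ compact, whence $\operatorname{range}(r_MN)$ contains a closed finite-codimensional subspace and is therefore closed. Your density step is likewise the dual formulation of the paper's: the paper computes $(r_MN)^*=N\mathcal{E}_0$ (extension by zero) and shows injectivity by pairing $N\mathcal{E}_0w$ with $\mathcal{E}_0w$ to get $\|I_m\mathcal{E}_0 w\|^2=0$; you test orthogonality against the single element $\bar w=\mathcal{E}_0w$ and reach the same identity $0=(N\bar w,\bar w)=\|I_m\bar w\|^2$, and both arguments then use simplicity of $\tM$ and the vanishing of $\bar w$ off $\overline{M}$ to descend to $I_m w=0$ on $M$.

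The one step you elide is the very last inference. Your $w$ is only an $L^2$ solenoidal tensor, while the hypothesis of the lemma is s-injectivity on $C^{\infty}(S^m(T^*M))$, so "since $w$ is solenoidal, the s-injectivity hypothesis forces $w=0$" does not apply as stated. The paper closes exactly this gap by the regularity-of-ghosts theorem \cite[Theorem 1.1]{SSU}: $I_mw=0$ with $w\in L^2$ implies $w$ is smooth and $\delta w=0$, after which the smooth s-injectivity hypothesis yields $w=0$; equivalently, s-injectivity on $L^2$ and on $C^{\infty}$ coincide by that theorem, as remarked after Theorem \ref{Main}. With that citation inserted, your proof is complete and coincides with the paper's.
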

Note that elements in $H^{-1}_c(S^m(T^*\tM))$ are defined in the sense of distributions, which are compactly supported in $\tM$.
\begin{proof}
We adopt the approach of \cite{DU} for showing the surjectivity of $N$ on $1$-forms. By Lemma \ref{partial parametrix}, 
$$NQu=u+Ku$$
for all $u\in L^2_c(S^m_{sol}(T^*\tM))$ with $K$ a smoothing operator on $\tM$. Since the simplicity is stable under small $C^2$-perturbations of the metric $g$, by Proposition \ref{extension}, we perturb the metric of $\tM\backslash\overline{M}$ a little bit (still denoted by $g$) so that under the new metric $\tM$ is still simple and there exists a bounded operator $\mathcal{E}: L^2(S^m_{sol}(T^*M))\to L^2(S^m_{U, sol}(T^*\tM))$ such that on $L^2(S^m_{sol}(T^*M))$
$$r_MNQ\mathcal{E}=E+r_MK\mathcal{E}.$$
Since $K$ is a smoothing operator, $r_MK\mathcal{E}$ is compact on $L^2(S^m_{sol}(T^*M))$, which implies that $E+r_MK\mathcal{E}$ has closed range and finite codimension. Thus we have $r_MNQ\mathcal{E}: L^2(S^m_{sol}(T^*M))\to L^2(S^m_{sol}(T^*M))$ has closed range and finite codimension. By the inclusion relation
$$r_MNQ\mathcal{E}(L^2(S^m_{sol}(T^*M)))\subset r_MN(H^{-1}_c(S^m(T^*\tM)))\subset L^2(S^m_{sol}(T^*M)),$$
the intermediate space $r_MN(H^{-1}_c(S^m(T^*\tM)))$ is also closed in $L^2(S^m_{sol}(T^*M))$. Thus it suffices to show that the adjoint $(r_MN)^*$ is injective, which will imply the surjectivity of $r_MN$.

For $L^2$ symmetric $m$-tensor fields, we have the decomposition
\begin{equation}\label{decomposition}
L^2(S^m(T^*M))=L^2(S^m_{sol}(T^*M))\oplus L^2(S^m_{P}(T^*M)),
\end{equation}
where $L^2(S^m_{P}(T^*M))$ is the potential part. 
Thus the dual operator of $r_MN$ is 
$$(r_MN)^*: L^2(S^m_{sol}(T^*M))\to (H^{-1}_c(S^m(T^*\tM)))^*.$$
For $u\in L^2(S^m_{sol}(T^*M))$ and $v\in H^{-1}_c(S^m(T^*\tM))$, if we denote  by $\mathcal E_0u$ the extension of $u$ to $\tM$ by zero (note that generally $\mathcal E_0u$ is not solenoidal on $\tM$), we have
$$((r_MN)^*u,v)=(u, r_MNv)=(\mathcal E_0u,Nv)=(N\mathcal E_0u,v),$$
i.e. $(r_MN)^*=N\mathcal E_0$. 

Therefore given $u\in L^2(S^m_{sol}(T^*M))$, if $N\mathcal E_0u=0$, then
$$0=(N\mathcal E_0u, \mathcal E_0u)=\| I_m\mathcal E_0u\|^2_{L^2(\p_+S{\tM})}\Longrightarrow I_m\mathcal E_0u=0.$$
Since $\mathcal E_0u=0$ outside $M$ and $\tM$ is simple, this implies 
$$I_mu=0.$$
By \cite[Theorem 1.1]{SSU}, $u$ is smooth and $\delta u=0$. The s-injectivity assumption implies $u=0$. This completes the proof of the lemma.
\end{proof}

Next we prove the lemma in the smooth setting:

\begin{Lemma}\label{smooth normal}
Let $M$ be a compact simple Riemannian manifold. Assume $I_m$ is s-injective on $C^{\infty}(S^m(T^*M))$, then the operator
$$r_M N: C^{\infty}_c(S^m(T^*\tM))\to C^{\infty}(S^m_{sol}(T^*M))$$
is surjective.
\end{Lemma}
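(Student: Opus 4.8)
The plan is to mirror the strategy of Lemma \ref{normal}, but with the $L^2$ solenoidal extension of Proposition \ref{extension} replaced by the smooth one of Proposition \ref{smooth extension}, and to read off surjectivity from the combination of a \emph{closed range} statement and a \emph{density} statement. After perturbing the metric on $\tM\setminus\overline M$ so that $\tM$ is still simple and carries the smooth solenoidal extension $\mathcal E$ of Proposition \ref{smooth extension}, I would form $\Phi:=r_MNQ\mathcal E$, where $Q$ is the properly supported parametrix of Lemma \ref{partial parametrix}. Since $\mathcal Eu$ is smooth, compactly supported in $U$ and solenoidal on $\tM$, the identity \eqref{parametrix} gives $NQ\mathcal Eu=\mathcal Eu+K\mathcal Eu$, and restriction to $M$ yields $\Phi u=u+Wu$ with $Wu:=r_MK\mathcal Eu$. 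As $K$ is smoothing and $\mathcal E$ is bounded $H^k(S^m_{sol}(T^*M))\to L^2$, the operator $W$ maps $H^k(S^m_{sol}(T^*M))$ into $H^{k+1}(S^m_{sol}(T^*M))$ (the image is solenoidal because $\Phi u$ and $u$ are). Thus $\Phi=E+W$ is a compact perturbation of the identity on $H^k(S^m_{sol}(T^*M))$, hence Fredholm of index $0$, and its range is closed of finite codimension $d$, equal to the common kernel of finitely many continuous functionals $\psi_1,\dots,\psi_d$ on $H^k(S^m_{sol}(T^*M))$.

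I would then descend to the smooth category by an elliptic-type regularity step: if $\Phi h\in C^\infty$ then $h=\Phi h-Wh\in C^\infty$, so that $\Phi(C^\infty(S^m_{sol}(T^*M)))=\{u\in C^\infty(S^m_{sol}(T^*M)):(u,\psi_j)=0,\ j=1,\dots,d\}$. Being the intersection of the kernels of finitely many functionals that are continuous on $C^\infty(S^m_{sol}(T^*M))$, this is a closed subspace of finite codimension in the Fréchet space $C^\infty(S^m_{sol}(T^*M))$. The key point is that $\Phi u=r_MNQ\mathcal Eu$ with $Q\mathcal Eu\in C^\infty_c(S^m(T^*\tM))$, so $\Phi(C^\infty(S^m_{sol}(T^*M)))$ is contained in the range $R:=r_MN(C^\infty_c(S^m(T^*\tM)))$ that we want to compute. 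Since a subspace containing a closed finite-codimensional subspace is itself closed, $R$ is closed in $C^\infty(S^m_{sol}(T^*M))$.

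It remains to show $R$ is dense, for then a closed dense subspace must coincide with the whole space. Here I would reuse verbatim the adjoint computation of Lemma \ref{normal}: if $\lambda$ lies in the dual of $C^\infty(S^m_{sol}(T^*M))$ and annihilates $R$, then for all $\varphi\in C^\infty_c(S^m(T^*\tM))$ we have $0=(\lambda,r_MN\varphi)=(N\mathcal E_0\lambda,\varphi)$, where $\mathcal E_0$ denotes extension by zero and $N$ is self-adjoint; hence $N\mathcal E_0\lambda=0$ on $\tM$. Pairing with $\mathcal E_0\lambda$ gives $\|I_m\mathcal E_0\lambda\|^2=0$, and since $\mathcal E_0\lambda$ is supported in $\overline M$ and $\tM$ is simple, $I_m(\mathcal E_0\lambda)=0$ forces $\lambda=0$ by \cite[Theorem 1.1]{SSU}, the negative-order mapping properties of Section 5, and the s-injectivity hypothesis. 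Thus $R$ is dense and therefore all of $C^\infty(S^m_{sol}(T^*M))$.

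The main obstacle is the derivative loss in Proposition \ref{smooth extension}: the smooth extension is bounded only as $H^k\to L^2$, so, unlike in Lemma \ref{normal}, one cannot realise $\Phi$ directly as identity-plus-smoothing on $L^2$. The remedy is to run the Fredholm argument one Sobolev level up, on $H^k(S^m_{sol}(T^*M))$, where $\Phi=E+W$ genuinely is a smoothing perturbation of the identity, and to pass to $C^\infty$ only afterwards via regularity; this is also why the defining functionals $\psi_j$ are a priori merely distributions, which is harmless since any distribution is continuous on $C^\infty$. The only other delicate point, the distributional identity $(r_MN)^*=N\mathcal E_0$ and its injectivity on $\bigcup_s H^{-s}$, is dealt with exactly as in Lemma \ref{normal} once the negative-Sobolev boundedness of $I_m$ and $N$ from Section 5 is invoked.
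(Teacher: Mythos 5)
Your first half is essentially the paper's own proof: the paper sets up precisely the identity $r_MNQ\mathcal{E}=E+r_MK\mathcal{E}$ on $H^k(S^m_{sol}(T^*M))$ with the smooth extension of Proposition \ref{smooth extension} (running the Fredholm argument one Sobolev level up to absorb the derivative loss, exactly as you do), and then invokes the argument of \cite[Lemma 2.2]{DU}; your regularity bootstrap $h=\Phi h-Wh$ and the closed-range/finite-codimension discussion in the Fr\'echet space are a correct unwinding of that scheme. The genuine gap is in your density step. The annihilating functional $\lambda$ is a priori only a distribution (your cokernel functionals live in $(H^k(S^m_{sol}(T^*M)))^*$, and nothing regularizes them: the adjoint $W^*=\mathcal{E}^*K^*\mathcal{E}_0$ does not smooth, since $\mathcal{E}^*$ loses $k$ derivatives). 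For such $\lambda$ the chain $0=\left(N\mathcal{E}_0\lambda,\mathcal{E}_0\lambda\right)=\|I_m\mathcal{E}_0\lambda\|^2_{L^2}$ is not defined: $I_m\mathcal{E}_0\lambda$ lies only in a negative Sobolev space by \eqref{I on -k}, the identity $\left(Nv,v\right)=\|I_mv\|^2_{L^2}$ used in Lemma \ref{normal} is established only for $v\in L^2$, and a mollification argument does not rescue it (the commutator terms are not controlled in the duality pairing, and the quadratic form could a priori be infinite). Likewise \cite[Theorem 1.1]{SSU} is a regularity statement for $L^2$ tensor fields and cannot be applied directly to $\mathcal{E}_0\lambda$. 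So your closing claim that the injectivity of $(r_MN)^*$ ``is dealt with exactly as in Lemma \ref{normal}'' is exactly the point where the smooth case differs from the $L^2$ case; this is not a routine invocation of the negative-Sobolev boundedness of Section 5.

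What is actually needed---and what \cite{DU} supplies, as recalled in Remark \ref{rem:colin}---is a microlocal analysis of the distributional kernel element: from $N\mathcal{E}_0\lambda=0$ one uses the parametrix structure of Lemma \ref{partial parametrix} to conclude that $\mathcal{E}_0\lambda=dp$ for a distribution $p$ on $\tM$ with $\mbox{sing\,supp}\,p\subset \p M$ and $p|_{\p\tM}=0$ (this is \cite[equation (2.33)]{DU}, and it replaces your $L^2$ pairing and the appeal to \cite{SSU}). Since $\mathcal{E}_0\lambda$ is supported in $M$, one has $dp=0$ outside $M$; as $p$ is smooth there and vanishes on $\p\tM$, strict convexity of $\p M$ forces $p\equiv 0$ on $\tM\setminus M$. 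Finally one pairs against smooth solenoidal extensions: with $\mathcal{E}u$ compactly supported solenoidal (Proposition \ref{smooth extension}), or with the extension of Lemma \ref{local solenoidal extension} whose divergence has support disjoint from that of $p$, one gets $(\lambda,u)=(dp,\mathcal{E}u)=(p,\delta\mathcal{E}u)=0$ for all $u\in C^{\infty}(S^m_{sol}(T^*M))$, hence $\lambda=0$. (A smaller point you should also make explicit: $\lambda$ is defined only on the solenoidal subspace, so ``extension by zero'' requires first extending $\lambda$ by Hahn--Banach to a distribution supported in $M$; any choice works, but the step is needed before $N\mathcal{E}_0\lambda$ makes sense.) Until the $L^2$ pairing argument is replaced by something of this type, the density half of your proof is unsupported.
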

\begin{proof}
By Lemma \ref{partial parametrix}, 
$$NQu=u+Ku$$
for all $u\in C^{\infty}_c(S^m_{sol}(T^*\tM))$ with $K$ a smoothing operator on $\tM$. Since the simplicity is stable under small $C^2$-perturbations of the metric $g$, by Proposition \ref{smooth extension}, we perturb the metric of $\tM\backslash\overline{M}$ a little bit (still denoted by $g$) so that under the new metric $\tM$ is still simple and there exists a bounded operator $\mathcal{E}: H^k(S^m_{sol}(T^*M))\to L^2(S^m_{U, sol}(T^*\tM))$ for some integer $k\geq 2$ with $\mathcal{E}(C^{\infty}(S^m_{sol}(T^*M)))\subset C^{\infty}(S^m_{U,sol}(T^*\tM))$ such that on $H^k(S^m_{sol}(T^*M))$
$$r_MNQ\mathcal{E}=E+r_MK\mathcal{E}.$$

Now the argument of \cite[Lemma 2.2]{DU} can be applied to tensors of any order to finish the proof.
\end{proof}

\begin{Remark}{\rm One can actually prove Lemma \ref{normal} and \ref{smooth normal} just by applying Lemma \ref{local solenoidal extension}. Given a smooth solenoidal tensor $u$ on $M$, by Lemma \ref{local solenoidal extension} we first extend it to a smooth solenoidal tensor $\tilde u$ on an arbitrarily small open neighborhood $U$, then we extend $\tilde u$ smoothly to $\tM$ with compact support, denoted by $\mathcal E u$. Note that generally $\mathcal E u$ is not solenoidal. Since the Schwartz kernel of the parametrix $S$ of $\delta d$ is smooth away from the diagonal $\Delta_{\tM\times \tM}$, we can choose $S$ to make the support of its Schwartz kernel sufficiently close to $\Delta_{\tM\times \tM}$ so that $dS\delta\mathcal Eu=0$ in an open neighborhood of $M$. This implies that $r_MdS\delta\mathcal Eu=0$, i.e. $r_MNQ\mathcal Eu=u+r_MK\mathcal E u$. It also works for $L^2$ solenoidal tensors.

On the other hand, the original proof of \cite[Lemma 2.2]{DU} uses the existence of compactly supported solenoidal extensions of solenoidal $1$-forms one more time at the very end to show that the adjoint $(r_MN)^*$ is injective. However, one can also avoid this. Notice that given a $1$-form $f$ in the kernel of $(r_MN)^*$, by \cite[equation (2.33)]{DU}, $f=dp$ for some distribution $p$ on $\tM$ with $\mbox{sing\,supp}\,p\subset \p M$ and $p|_{\p\tM}=0$. Moreover, since $\mbox{supp}\,f\subset M$, we have $dp=0$ outside $M$. As $p$ is smooth outside $M$ and $p=0$ on $\p\tM$, strict convexity of $\p M$ implies $p\equiv 0$ in $\tM\backslash M$. Now given a smooth solenoidal $1$-form $u$ on $M$, by Lemma \ref{local solenoidal extension} let $\mathcal Eu$ be the smooth compactly supported extension of $u$ to $\tM$ which is solenoidal in a small open neighborhood ($\neq\tM)$ of $M$. Since the supports of $\delta\mathcal E u$ and $p$ are disjoint, we have
$$(f,\mathcal E u)=(dp,\mathcal E u)=(p,\delta \mathcal E u)=0,$$
which implies that $f=0$, i.e. $(r_MN)^*$ has trivial kernel. The argument works for tensors of arbitrary rank.

At this point, we see that one can prove the surjectivity of $r_MN$ just using Lemma \ref{local solenoidal extension}, without the need of knowing the generic absence of non-trivial Killing tensors \cite{KM}. However, a perturbation of the metric seems still necessary so far for the proof of the existence of {\it compactly supported} solenoidal extensions, and Propositions \ref{extension} and \ref{smooth extension} may find their applications in other areas.} 
\label{rem:colin}
\end{Remark}


\section{Analysis of the adjoint $I_m^*$}
Before proving the main result, we need to extend the definition of the geodesic ray transform $I_m$ so that it acts on negative Sobolev spaces. To this end, we will study the regularity property of the adjoint of the geodesic ray transform, $I_m^*$.

As discussed in the introduction, given $M$ a compact non-trapping manifold with strictly convex boundary, the operator $I_m^*: C^{\infty}_{\alpha}(\p_+SM)\to C^{\infty}(S^m(T^*M))$ is the product of two operators, i.e. $I_m^*=L_m\circ I^*$. 
We instead study the regularity properties of $I^*$ and $L_m$. We start with the latter.

\begin{Lemma}\label{L}
Given a compact Riemannian manifold $M$ (with or without boundary), the operator
$$L_m: H^k(SM)\to H^k(S^m(T^*M))$$
is bounded for every integer $k\geq 0$.
\end{Lemma}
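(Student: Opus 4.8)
The plan is to use the explicit formula derived above, by which $L_m$ is fibre integration against the smooth moment $\xi^{j_1}\cdots\xi^{j_m}$, namely $L_mf(x)^{j_1\cdots j_m}=\int_{S_xM}f(x,\xi)\,\xi^{j_1}\cdots\xi^{j_m}\,d\Omega_x(\xi)$. Since the fibres $S_xM$ are compact and the moment is uniformly bounded on them, integration against it is a smoothing operation in the vertical directions that costs no base derivatives, and the whole argument is an induction on $k$.

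For the base case $k=0$ I would apply Cauchy--Schwarz on each fibre: because $|\xi^{j_1}\cdots\xi^{j_m}|\le 1$ and $\vol(S_xM)$ is bounded uniformly in $x$, one gets $|L_mf(x)|^2\le C\int_{S_xM}|f(x,\xi)|^2\,d\Omega_x(\xi)$ pointwise. Integrating $dV^n(x)$ over $M$ and recalling that $d\Sigma^{2n-1}=|dV^n\wedge d\Omega_x|$ yields $\|L_mf\|_{L^2(M)}^2\le C\|f\|_{L^2(SM)}^2$, which is the claim for $k=0$.

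For the inductive step I would pass to covariant derivatives of the tensor $L_mf$ and differentiate under the integral sign, using the horizontal lift as the correct notion of differentiation. Writing $H_i$ for the horizontal vector field on $SM$ associated with $\partial/\partial x^i$, the Levi-Civita connection preserves both the fibrewise volume form $d\Omega_x$ and the length constraint $\|\xi\|_g=1$, so horizontal differentiation commutes with fibre integration up to lower-order terms. Concretely I expect an identity of the shape $\nabla_i(L_mf)^{j_1\cdots j_m}=\int_{S_xM}(H_if)\,\xi^{j_1}\cdots\xi^{j_m}\,d\Omega_x(\xi)+(\text{fibre integrals of }f\text{ against smooth moments})$, in which every term is again a fibre integration of the type handled in the base case, applied either to the first-order horizontal derivative $H_if$ or to $f$ itself. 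Iterating this identity $k$ times expresses $\nabla^k(L_mf)$ as a finite sum of such fibre integrals, the integrands being horizontal derivatives of $f$ of order at most $k$ against smooth moments. Applying the $k=0$ estimate to each summand, and observing that the iterated horizontal derivatives $H_{i_1}\cdots H_{i_r}f$ with $r\le k$ are controlled by $\|f\|_{H^k(SM)}$, completes the induction.

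The main obstacle is the differentiation-under-the-integral step producing the displayed identity: one must keep track of the $x$-dependence of both the integration domain $S_xM$ and the moment $\xi^{\otimes m}$ through the metric, and verify that the resulting correction terms never raise the order of differentiation falling on $f$. This is exactly where the parallelism of $d\Omega_x$ under $\nabla$ enters — it guarantees that the corrections are zeroth order in $f$, with smooth coefficients built from the Christoffel symbols and curvature, so that each application of $\nabla$ increases the derivative count on $f$ by at most one. A purely local alternative is available if one prefers to avoid the horizontal formalism: trivialising $SM$ near a point and rescaling $\xi$ to unit length with respect to $g(x)$ turns $L_mf$ into a smooth family of integrals over a fixed sphere, after which Leibniz differentiation gives the same bookkeeping. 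Granting the identity, the Sobolev estimate follows from the compactness of the fibres and the base case.
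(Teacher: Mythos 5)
Your proposal is correct, and it runs on the same engine as the paper's proof: differentiate under the fibre integral, check that all derivatives landing on $f$ are of order at most $k$ along smooth vector fields on $SM$ while everything else contributes smooth bounded coefficients, then finish with Cauchy--Schwarz on the compact fibres and density of $C^{\infty}(SM)$ in $H^k(SM)$. The difference is one of implementation. The paper avoids the horizontal formalism entirely: after a partition of unity it changes variables $\xi=\xi(x,\eta)$ onto the fixed sphere $S^{n-1}$, picking up Jacobians $P(x,\eta)$ and $P'(x,\xi)$, and applies the plain Leibniz rule to $D_x^{\alpha}\bigl[\psi(x)\int_{S^{n-1}}f(x,\xi(x,\eta))\,\xi^{J}(x,\eta)P(x,\eta)\,d\Omega(\eta)\bigr]$; the terms $D_x^{\alpha_2}f(x,\xi(x,\eta))$ are then linear combinations of coordinate derivatives $D_z^{\gamma}f$ on $SM$ with $|\gamma|\leq k$ and smooth coefficients, and Cauchy--Schwarz gives the estimate chart by chart. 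This is precisely the ``purely local alternative'' you sketch in your last paragraph, so in effect you carry the paper's proof as your fallback. Your primary route through horizontal lifts is also sound, and in fact cleaner than you claim: since parallel transport preserves the unit sphere and the fibre volume form $d\Omega_x$, and the tautological field $\xi$ is horizontally parallel, the commutation is exact in covariant form, $\nabla_i(L_mf)^{j_1\cdots j_m}=\int_{S_xM}(H_if)\,\xi^{j_1}\cdots\xi^{j_m}\,d\Omega_x(\xi)$ with no correction terms at all (the Christoffel corrections you anticipate arise only if one insists on coordinate rather than covariant derivatives, and are zeroth order in $f$, as you say); this is Sharafutdinov's semibasic tensor calculus. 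What the invariant route buys is coordinate-free bookkeeping with no Jacobians and no partition of unity, at the cost of invoking that calculus; the paper's computation is more elementary and self-contained. Two small points you should make explicit to be complete: in general coordinates the components $\xi^{j}$ of a unit vector are bounded by a constant rather than by $1$, and the passage from smooth $f$ to $f\in H^k(SM)$ requires the closing approximation argument, which the paper records.
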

\begin{proof}
Our purpose is to show that there exists a constant $C>0$, such that for any $w\in H^k(SM)$, the following holds
\begin{equation}\label{bounded L}
\|L_mf\|_{H^k}\leq C\|f\|_{H^k}.
\end{equation}
Since $M$ is compact, by a partition of unit, it suffices to show the above inequality in local charts. Let $U$ be a domain in $SM$ with local coordinate system $(z^1,\cdots,z^{2n-1})$. We assume supp$\,f\subset U$. Let $V$ be a domain in $M$ with local coordinate system $(x^1,\cdots,x^n)$, and $\psi$ be a smooth function with support in $V$. We will show
$$\|\psi L_mf\|_{H^k(S^m(T^*V))}\leq C\|f\|_{H^k(U)}.$$
By the definition of the $H^k$-norm of tensors, we only need to show the above inequality is true for each component of the tensor.

We start with $f\in C^{\infty}(SM)$ with support in $U$, then $L_mf$ is also smooth. Let $J=(j_1\cdots j_m)$ and $\xi^J:=\xi^{j_1}\cdots\xi^{j_m}$, then
\begin{equation}
\begin{split}\label{L k-th derivative}
& D^{\alpha}_x[\psi(x)L_mf(x)^J]=D_x^{\alpha}[\psi(x)\int_{S_xM}f(x,\xi)\xi^J\,d\Omega_x(\xi)]\\
=& D_x^{\alpha}[\psi(x)\int_{S^{n-1}}f(x,\xi(x,\eta))\xi^J(x,\eta) P(x,\eta)\,d\Omega(\eta)]\\
=& \sum_{\alpha_1+\alpha_2+\alpha_3=\alpha}D_x^{\alpha_1}\psi(x)\int_{S^{n-1}}D_x^{\alpha_2}f(x,\xi(x,\eta))\cdot D_x^{\alpha_3}[\xi^J(x,\eta) P(x,\eta)]\,d\Omega(\eta)\\
=& \sum_{\alpha_1+\alpha_2+\alpha_3=\alpha}D_x^{\alpha_1}\psi(x)\int_{S_xM}D_x^{\alpha_2}f(x,\xi)\cdot D_x^{\alpha_3}[\xi^J P(x,\eta(x,\xi))]\cdot P'(x,\xi)\,d\Omega_x(\xi).
\end{split}
\end{equation}
Here $P$ and $P'$ are corresponding Jacobians.

For $|\alpha|\leq k$, according to \eqref{L k-th derivative}
\begin{equation*}
\begin{split}
\|D^{\alpha}_x[\psi(x)L_mf(x)_J]\|^2_{L^2(V)} & \leq\sum_{\beta\leq\alpha}C_{\beta,\alpha}\int_V\int_{S_xM}|D_x^{\beta}f(x,\xi)|^2\,d\Omega_x(\xi)dx\\
& \leq \sum_{|\gamma|\leq|\alpha|}C_{\gamma,\alpha}\int_U|D_z^{\gamma}f(z)|^2\,dz\\
& \leq C\|f\|^2_{H^k(U)}.
\end{split}
\end{equation*}
Thus the estimate \eqref{bounded L} is proved when $w\in C^{\infty}(SM)$.

For $f\in H^k(SM)$, since $C^{\infty}(SM)$ is dense in $H^k(SM)$, by an approximation argument, it is easy to show that $L_mf\in H^k(S^m(T^*M))$ and the estimate \eqref{bounded L} holds too. This proves the lemma. 
\end{proof}


Now we turn to the analysis of the operator $I^*$, which basically is an invariant extension, along the geodesic flow, of functions on $\p_+SM$ to functions on $SM$. It is well-known that given $\varphi\in C^{\infty}(\p_+SM)$, $\varphi^{\sharp}=I^*(\varphi)$ is not necessarily in $C^{\infty}(SM)$. The following subspace of $C^{\infty}(\p_+SM)$ has already been considered in the introduction,
$$C^{\infty}_{\alpha}(\p_+SM):=\{\varphi\in C^{\infty}(\p_+SM): \varphi^{\sharp}\in C^{\infty}(SM)\}.$$
In particular, by \cite[Lemma 1.1]{PU}, if $M$ is compact non-trapping with strictly convex boundary,
$$C^{\infty}_{\alpha}(\p_+SM)=\{\varphi\in C^{\infty}(\p_+SM): A\varphi\in C^{\infty}(\p SM)\}$$
where
\begin{equation*}
A\varphi(x,\xi)=\left\{ \begin{array}{l l}
\varphi(x,\xi), \quad & (x,\xi)\in \p_+SM,\\
 \varphi(\gamma_{x,\xi}(-\tau(x,-\xi)),\dot{\gamma}_{x,\xi}(-\tau(x,-\xi))), & (x,\xi)\in \p_-SM .
       \end{array} \right.
\end{equation*}
Since $A\varphi$ is smooth in both $(\p_+SM)^{int}$ and $(\p_-SM)^{int}$, the singularities can only come from $S(\p M)$. We introduce the space $H^{k}_{\alpha}(\p_+SM),\, k\geq 0$ to be the completion of $C^{\infty}_{\alpha}(\p_+SM)$ under the $H^k$-norm. Obviously $H^0_{\alpha}(\p_+SM)=L^2(\p_+SM)$. 
It is easy to show that $C^{\infty}_c((\p_+SM)^{int})\subset C^{\infty}_{\alpha}(\p_+SM)$ (This is from the fact that $\p_+ SM$ is compact and the boundary $\p M$ is strictly convex), which implies that $H^{k}_0(\p_+SM)\subset H^k_{\alpha}(\p_+SM)$. 

\begin{Lemma}\label{phi}
Given a compact non-trapping manifold $M$ with strictly convex boundary, the operator
$$I^*: H^k_{\alpha}(\p_+SM)\to H^k(SM)$$
is bounded for any integer $k\geq 0$.
\end{Lemma}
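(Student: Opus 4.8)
The plan is to prove the a priori estimate
$$\|I^*\varphi\|_{H^k(SM)}=\|\varphi^{\sharp}\|_{H^k(SM)}\leq C\,\|\varphi\|_{H^k(\p_+SM)}$$
for all $\varphi\in C^{\infty}_{\alpha}(\p_+SM)$, for which $\varphi^{\sharp}\in C^{\infty}(SM)$ holds by the very definition of $C^{\infty}_{\alpha}$, and then to conclude by density: since $H^k_{\alpha}(\p_+SM)$ is by construction the completion of $C^{\infty}_{\alpha}(\p_+SM)$ in the $H^k(\p_+SM)$-norm, the bounded extension to $H^k_{\alpha}$ follows at once. The base case $k=0$ is immediate from Santal\'o's formula applied to $f=|\varphi^{\sharp}|^2$: because $\varphi^{\sharp}$ is constant along orbits, $\varphi^{\sharp}(\g_{x,\xi}(t),\dot{\g}_{x,\xi}(t))=\varphi(x,\xi)$, so
$$\|\varphi^{\sharp}\|^2_{L^2(SM)}=\int_{\p_+SM}\tau(x,\xi)\,|\varphi(x,\xi)|^2\,d\mu(x,\xi).$$
As $(M,g)$ is compact non-trapping, $\tau$ is bounded, and since $d\mu=|\<\nu,\xi\>|\,d\Sigma^{2n-2}\leq d\Sigma^{2n-2}$, the right-hand side is at most $C\,\|\varphi\|^2_{L^2(\p_+SM)}$.

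For $k\geq 1$ I would exploit the flow-out structure directly, which is cleaner than differentiating the transport equation $X\varphi^{\sharp}=0$ (the latter runs into an absorption problem at first order). Over any region of $SM$ sitting above a closed subset of $\p_+SM$ on which $|\<\nu,\xi\>|\geq \epsilon>0$, the geodesic flow is transverse to $\p SM$ and the map $(y,\eta,t)\mapsto(\g_{y,\eta}(t),\dot{\g}_{y,\eta}(t))$, with $(y,\eta)\in\p_+SM$ and $0<t<\tau(y,\eta)$, is a diffeomorphism onto its image whose Jacobian, together with all its derivatives, is bounded above and below in terms of $\epsilon$. In these coordinates $\varphi^{\sharp}$ is literally the function $\varphi(y,\eta)$, independent of $t$, so the $H^k$ norm of $\varphi^{\sharp}$ over this region is controlled, by the chain rule and the comparability of the two coordinate systems, by $\|\varphi\|_{H^k(\p_+SM)}$ (the integration in $t$ over an interval of bounded length $\tau$ contributing only a bounded factor). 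This disposes of the entire interior and of all non-glancing boundary behaviour.

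The main obstacle is the glancing set $S(\p M)=\{\<\nu,\xi\>=0\}$, where the flow is tangent to $\p SM$ and the flow-out coordinates degenerate, so that a priori the transverse derivatives of $\varphi^{\sharp}$ blow up as $\epsilon\to 0$. This is precisely the region that the hypothesis $\varphi\in C^{\infty}_{\alpha}(\p_+SM)$ is designed to control: by \cite[Lemma 1.1]{PU} it is equivalent to $A\varphi\in C^{\infty}(\p SM)$, i.e. $\varphi$ extends smoothly across $S(\p M)$ through the scattering relation, and one has $\varphi^{\sharp}|_{\p SM}=A\varphi$. The hard part of the argument is to combine this smoothness with the strict convexity of $\p M$ — which forces $\tau\to 0$ at glancing and governs the geometry of grazing rays — so as to cancel the degenerating Jacobian factors against the shrinking exit time and thereby make the derivative estimates uniform up to $S(\p M)$, all while keeping the bound expressed in terms of $\|\varphi\|_{H^k(\p_+SM)}$ rather than $\|A\varphi\|_{H^k(\p SM)}$ (the latter need not be controlled by the former once one passes to the completion). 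I would model this near-glancing analysis on the regularity study of $A$ and of $\varphi^{\sharp}$ carried out in \cite{PU}, adapting their estimates to the Sobolev scale.
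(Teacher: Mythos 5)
Your overall frame --- reduction to an a priori estimate on $C^{\infty}_{\alpha}(\p_+SM)$ followed by density in $H^k_{\alpha}$, the $k=0$ case via Santal\'o's formula, and flow-out coordinates with uniform control away from the glancing set --- agrees with the paper's proof. But the argument is incomplete at exactly the point where the lemma has content for $k\geq 1$: the uniform estimate across $S(\p M)$ is announced as ``the hard part'' and then replaced by an intention to adapt the analysis of \cite{PU}; no estimate is actually proved there, so this is a genuine gap rather than a proof. Moreover, the mechanism you propose --- cancelling the degenerating inverse-Jacobian factors against the shrinking exit time --- does not obviously balance: strict convexity supplies only a single power, $\tau(y)\asymp|\<\xi(y),\nu\>|$ on $\p_+SM$, whereas the chain-rule coefficients (derivatives of the backward exit maps $z\mapsto (y(z),t(z))$) degenerate by a negative power \emph{per derivative} (the exit time from an interior point at depth $h$ in near-tangent directions scales like $\sqrt{h}$), so after squaring the deficit grows with $k$ and a fixed factor of $\tau$ cannot absorb it uniformly in $k$. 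Since $\varphi^{\sharp}\in C^{\infty}(SM)$, the blow-up of these naive coefficients must be compensated by cancellations coming from the smoothness of $A\varphi$, but exhibiting those cancellations is precisely the difficulty your write-up leaves open (along with your own correct worry that the bound must be in $\|\varphi\|_{H^k(\p_+SM)}$, not $\|A\varphi\|_{H^k(\p SM)}$).

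The paper closes this gap without any boundary-layer or near-glancing analysis, by running the estimate so that derivatives never fall on the degenerate inverse map. It applies Leibniz to $D_z^{\alpha}(\psi\varphi^{\sharp})$ in a chart on $SM$ --- legitimate because $\varphi\in C^{\infty}_{\alpha}$ means $\varphi^{\sharp}\in C^{\infty}(SM)$ --- and then passes to the parametrization $z=\Psi(y,t)=(\g_y(t),\dot{\g}_y(t))$ via \cite[Lemma 4.2.2]{Sh1}, obtaining $\int_V|D_z^{\beta}\varphi^{\sharp}|^2\,dz\leq C\sum\int_U\int_0^{\tau(y)}|D_y^{\sigma}D_t^{s}(\varphi^{\sharp}\circ\Psi)|^2\,|\<\xi(y),\nu\>|\,dt\,dy$; the delicate behaviour at $S(\p M)$ is entirely packaged in that citation, with the Santal\'o weight $|\<\xi,\nu\>|\leq 1$ sitting harmlessly on the right. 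The decisive observation --- present in your flow-out picture but not exploited quantitatively --- is that the pullback $\varphi^{\sharp}\circ\Psi(y,t)=\varphi(y)$ is independent of $t$: all $D_t$-terms vanish identically, only $D_y^{\sigma}\varphi$ survives, and the $t$-integration contributes just the bounded factor $\tau(y)$, giving $\|\varphi^{\sharp}\|_{H^k(SM)}\leq C\|\varphi\|_{H^k(\p_+SM)}$ directly in the required norm. To repair your proposal, replace the planned adaptation of \cite{PU} by this change-of-variables estimate from \cite{Sh1}; everything else in your outline then goes through.
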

\begin{proof}
The idea is similar to the proof of Lemma \ref{L}. First we consider the case $\varphi\in C^{\infty}_{\alpha}(\p_+SM)$, thus $\varphi^{\sharp}\in C^{\infty}(SM)$. Let $U$ be a domain in $\p_+SM$ with local coordinate systems $(y^1,\cdots,y^{2n-2})$. We assume supp$\,\varphi\subset U$. Let $V$ be a domain in $SM$ with local coordinate systems $(z^1,\cdots,z^{2n-1})$, and $\psi$ be a smooth function with support in $V$. Since $M$ is compact, it suffices to show
$$\|\psi \varphi^{\sharp}\|_{H^k(V)}\leq C\|\varphi\|_{H^k(U)}.$$

Since
$$D^{\alpha}_z[\psi(z)\varphi^{\sharp}(z)]=\sum_{\beta+\gamma=\alpha}D_z^{\gamma}\psi(z)\cdot D_z^{\beta}\varphi^{\sharp}(z),$$
we obtain that for $|\alpha|\leq k$
\begin{equation*}
\|D^{\alpha}_z[\psi(z)\varphi^{\sharp}(z)]\|^2_{L^2(V)}\leq\sum_{\beta\leq\alpha}C_{\beta,\alpha}\int_V |D_z^{\beta}\varphi^{\sharp}(z)|^2\,dz.
\end{equation*}
Now let $D=\{(y,t): y\in\p_+SM,\,0\leq t\leq\tau(y)\}$ be a closed domain in $\p_+SM\times\mathbb{R}$. Define the map $\Psi: D\to SM$ by $z=\Psi(y,t)=(\gamma_{y}(t),\dot{\gamma}_{y}(t))$, by \cite[Lemma 4.2.2]{Sh1}
\begin{equation*}
\begin{split}
\int_V |D_z^{\beta}\varphi^{\sharp}(z)|^2\,dz & \leq\sum_{|\sigma|+s=|\beta|}C_{\beta, \sigma,s}\int_U\int_0^{\tau(y)}|D_y^{\sigma}D_t^{s}\varphi^{\sharp}(z(y,t))|^2|\<\xi(y), \nu(x(y))\>|\,dtdy\\
& =\sum_{|\sigma|=|\beta|}C_{\beta,\sigma}\int_U\int_0^{\tau(y)}|D_y^{\sigma}\varphi^{\sharp}(y,t)|^2\,dtd\mu(y)\quad (D^s_tD_y^{\sigma}\varphi^{\sharp}=D_y^{\sigma}D^s_t\varphi^{\sharp})\\
& =\sum_{|\sigma|=|\beta|}C_{\beta,\sigma}\int_U\tau(y)|D_y^{\sigma}\varphi(y)|^2\,d\mu(y)\\
& \leq \sum_{|\sigma|=|\beta|}C'_{\beta,\sigma}\int_U|D_y^{\sigma}\varphi(y)|^2\,d\mu(y)\\
& \leq C\|\varphi\|^2_{H^k(U)}.
\end{split}
\end{equation*}
Therefore, $\|\varphi^{\sharp}\|_{H^k(SM)}\leq C\|\varphi\|_{H^k(\p_+SM)}$ for $\varphi\in C^{\infty}_{\alpha}(\p_+SM)$.

If $\varphi\in H^k_{\alpha}(\p_+SM)$, since $C^{\infty}_{\alpha}(\p_+SM)$ is dense in $H^k_{\alpha}(\p_+SM)$, by an approximation argument, it is easy to show that $\varphi^{\sharp}\in H^k(SM)$ and the operator $I^*$ is bounded, which proves the lemma.
\end{proof}

Combining the two lemmas above, we obtain the desired regularity property of $I_m^*$.

\begin{Proposition}\label{I*}
Given a compact non-trapping Riemannian manifold $M$ with strictly convex boundary, the adjoint operator of the geodesic ray transform on symmetric $m$-tensors
$$I_m^*=L_m\circ I^*: H^k_{
\alpha}(\p_+SM)\to H^k(S^m(T^*M))$$
is bounded for any integer $k\geq 0$.
\end{Proposition}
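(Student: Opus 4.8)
The final statement, Proposition \ref{I*}, asserts that the composition $I_m^*=L_m\circ I^*$ is bounded $H^k_\alpha(\p_+SM)\to H^k(S^m(T^*M))$ for every integer $k\geq 0$. The plan is simply to chain together the two boundedness results already established in this section, namely Lemma \ref{phi} and Lemma \ref{L}, using the factorization $I_m^*=L_m\circ I^*$ that was recorded in the preliminaries.

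First I would recall that by Lemma \ref{phi} the operator
$$I^*: H^k_\alpha(\p_+SM)\to H^k(SM)$$
is bounded for every integer $k\geq 0$; that is, there is a constant $C_1>0$ with $\|I^*\varphi\|_{H^k(SM)}\leq C_1\|\varphi\|_{H^k_\alpha(\p_+SM)}$ for all $\varphi\in H^k_\alpha(\p_+SM)$. Thus $I^*$ maps $H^k_\alpha(\p_+SM)$ into $H^k(SM)$, which is precisely the domain of the second operator.

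Next I would invoke Lemma \ref{L}, which gives that
$$L_m: H^k(SM)\to H^k(S^m(T^*M))$$
is bounded, say with constant $C_2>0$, so that $\|L_m f\|_{H^k}\leq C_2\|f\|_{H^k}$ for all $f\in H^k(SM)$. Since the target space of $I^*$ coincides with the source space of $L_m$, the composition is well defined on $H^k_\alpha(\p_+SM)$, and for any $\varphi$ in that space one has $L_m(I^*\varphi)=I_m^*\varphi$ by the identity $I_m^*=L_m\circ I^*$ established in Section 2. Combining the two estimates yields
$$\|I_m^*\varphi\|_{H^k(S^m(T^*M))}=\|L_m(I^*\varphi)\|_{H^k}\leq C_2\|I^*\varphi\|_{H^k(SM)}\leq C_1C_2\|\varphi\|_{H^k_\alpha(\p_+SM)},$$
so $I_m^*$ is bounded with operator norm at most $C_1C_2$.

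There is no genuine obstacle here: the proposition is a formal consequence of the composability of two bounded operators whose intermediate spaces match. The only point requiring minimal care is to verify that the factorization $I_m^*=L_m\circ I^*$, derived in the smooth category in Section 2, persists on the completed spaces $H^k_\alpha(\p_+SM)$; this follows from a routine density argument, since $C^\infty_\alpha(\p_+SM)$ is dense in $H^k_\alpha(\p_+SM)$ by definition and both $L_m\circ I^*$ and $I_m^*$ are continuous, so the identity extends from the dense subspace to the whole space by continuity. Hence the proof is essentially a two-line concatenation of Lemmas \ref{phi} and \ref{L}.
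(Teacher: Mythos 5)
Your proof is correct and follows exactly the paper's argument: the paper derives Proposition \ref{I*} simply by combining Lemma \ref{L} and Lemma \ref{phi}, precisely the composition of bounded operators you describe. Your additional remark that the factorization $I_m^*=L_m\circ I^*$ extends from $C^\infty_\alpha(\p_+SM)$ to $H^k_\alpha(\p_+SM)$ by density and continuity is a sound (if implicit in the paper) point of care.
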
 

Now we can extend the definition of the geodesic ray transform so that it acts on $(H^{k}(S^m(T^*M)))^*$ (the dual space is with respect to the $L^2$ inner product) for integers $k\geq 1$. Let $u\in (H^{k}(S^m(T^*M)))^*$ and $\varphi\in H^k_{\alpha}(\p_+SM)$, we define $I_mu$ in the sense of distributions
\begin{equation}\label{I on distributions}
(I_mu, \varphi):=(u, I_m^*\varphi).
\end{equation}
By Proposition \ref{I*}, the right hand side of \eqref{I on distributions} is well-defined. We derive the following corollary:

\begin{Corollary}
Given $M$, a compact non-trapping manifold with strictly convex boundary, the operator
\begin{equation*}
I_m: (H^{k}(S^m(T^*M)))^*\to (H^{k}_{\alpha}(\p_+SM))^*.
\end{equation*}
defined by \eqref{I on distributions} is bounded.
\end{Corollary}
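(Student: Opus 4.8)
The plan is to recognize this corollary as the statement that $I_m$ is the Banach-space transpose of the bounded operator $I_m^*$ from Proposition \ref{I*}, so that its boundedness follows by a routine duality estimate. First I would observe that the defining pairing \eqref{I on distributions} makes $I_m u$ a well-defined linear functional on $H^k_{\alpha}(\p_+SM)$: for fixed $u \in (H^k(S^m(T^*M)))^*$ the map $\varphi \mapsto (u, I_m^*\varphi)$ is linear in $\varphi$, and by Proposition \ref{I*} the argument $I_m^*\varphi$ indeed lies in $H^k(S^m(T^*M))$, the space on which the functional $u$ acts. Thus $I_m u$ determines an element of $(H^k_{\alpha}(\p_+SM))^*$ and the operator $I_m$ is at least well-defined on the asserted spaces.

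For the quantitative bound I would use the dual-norm characterization. Let $C = \|I_m^*\|$ denote the operator norm furnished by Proposition \ref{I*}, so that $\|I_m^*\varphi\|_{H^k(S^m(T^*M))} \leq C\|\varphi\|_{H^k_{\alpha}(\p_+SM)}$ for all $\varphi \in H^k_{\alpha}(\p_+SM)$. Then for any such $\varphi$,
\begin{equation*}
|(I_m u, \varphi)| = |(u, I_m^*\varphi)| \leq \|u\|_{(H^k(S^m(T^*M)))^*}\,\|I_m^*\varphi\|_{H^k(S^m(T^*M))} \leq C\,\|u\|_{(H^k(S^m(T^*M)))^*}\,\|\varphi\|_{H^k_{\alpha}(\p_+SM)},
\end{equation*}
where the middle inequality is just the definition of the dual norm of $u$ applied to the element $I_m^*\varphi$. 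Taking the supremum over all $\varphi$ with $\|\varphi\|_{H^k_{\alpha}(\p_+SM)} = 1$ yields $\|I_m u\|_{(H^k_{\alpha}(\p_+SM))^*} \leq C\,\|u\|_{(H^k(S^m(T^*M)))^*}$, which is exactly the claimed boundedness, with operator norm no larger than that of $I_m^*$.

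Since every step is an application of the definition of dual norms together with the already-proved Proposition \ref{I*}, there is no genuine obstacle here; all of the analytic content resides in the boundedness of $I_m^*$ on the scale $H^k_{\alpha}(\p_+SM) \to H^k(S^m(T^*M))$. The only point requiring a word of care is the well-definedness of the pairing, namely that $I_m^*\varphi$ lands in the space on which the functional $u$ is defined, which is precisely what Proposition \ref{I*} guarantees. Once that is in hand, the corollary is simply the formal transpose of a continuous linear map between normed spaces, and no further estimates are needed.
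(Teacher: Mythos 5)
Your proof is correct and is exactly the paper's argument: the corollary is stated there as an immediate consequence of Proposition \ref{I*}, with $I_m$ defined via the pairing \eqref{I on distributions} as the transpose of the bounded operator $I_m^*: H^k_{\alpha}(\p_+SM)\to H^k(S^m(T^*M))$, and boundedness following by the same dual-norm estimate you give.
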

Here the dual space $(H^{k}_{\alpha}(\p_+SM))^*$ is also with respect to the $L^2$ inner product. Notice that $H^{k}_0(\p_+SM)\subset H^k_{\alpha}(\p_+SM)$, thus $(H^{k}_{\alpha}(\p_+SM))^*\subset H^{-k}(\p_+SM)$. On the other hand, since $C^{\infty}(S^m(T^*M))$ is dense in $H^k(S^m(T^*M))$ under the $H^k$-norm, it is clear that $H^{-k}_c(S^m(T^*M^{int}))\subset (H^{k}(S^m(T^*M)))^*$; we will use the weaker map in the next section:
\begin{equation}\label{I on -k}
I_m: H^{-k}_c(S^m(T^*M^{int}))\to H^{-k}(\p_+SM).
\end{equation}


\section{Proof of Theorem \ref{thm:main}}

Now we are in a position to prove our main theorem. We start by showing that (1), (2) and (3) are equivalent.

\begin{proof}
(1)$\Rightarrow$(2): Since $M$ is simple, given $u\in L^2(S^m_{sol}(T^*M))$, by Lemma \ref{normal}, there exists $v\in H^{-1}_c(S^m(T^*\tM))$ such that $r_MI_m^*\,I_mv=u$. Then \eqref{I on -k} implies the existence of some $\tilde{\varphi}=I_mv\in H^{-1}(\p_+S\tM)$, such that $u=r_MI_m^*\tilde{\varphi}$. For $w\in H^1_0(S^m(T^*M))$, we define the distribution $\varphi$ acting on $I_m(H^1_0(S^m(T^*M)))$ by 
$$(\varphi, I_mw):=(\tilde{\varphi}, I_m\tilde{w})=(I_m^*\tilde{\varphi},\tilde{w}),$$
where $\tilde{w}\in H^1_0(S^m(T^*\tM))$ is the extension of $w$ which is zero outside $M$. We claim that there exists $C>0$ such that
$$|(\varphi,I_mw)|\leq C\|I_mw\|_{H^1}$$
for all $w\in H^1_0(S^m(T^*M))$. Assuming the claim, note that $I_mw\in H^1_0(\p_+SM)$ and by the Hahn-Banach Theorem, $\varphi$ can be extended to a bounded linear functional on $H^1_0(\p_+SM)$, still denoted by $\varphi$, i.e. $\varphi\in H^{-1}(\p_+SM)$. By definition of $\varphi$,
$$|(\varphi,I_mw)|=|(\tilde{\varphi},I_m\tilde{w})|\leq C\|I_m\tilde{w}\|_{H^1},$$
therefore to prove the claim, it suffices to show that 
\begin{equation}\label{equivalent norm 1}
\|I_m\tilde{w}\|_{H^1(\p_+S\tM)}\leq C\|I_mw\|_{H^1(\p_+SM)}
\end{equation}
for some $C>0$. 

Assume at this point that inequality \eqref{equivalent norm 1} holds and let us continue with the proof. Now $\varphi\in H^{-1}(\p_+SM)$ is well-defined. Let $w\in H^1_0(S^m(T^*M))$, and let $\tilde{w}$ be  the extension of $w$ into $\tM$ which is zero outside $M$, so $\tilde{w}\in H^1_0(S^m(T^*\tM))$. Then
$$(r_MI_m^*\tilde{\varphi}, w)=(I_m^*\tilde{\varphi}, \tilde{w})=(\tilde{\varphi}, I_m\tilde{w})=(\varphi,I_mw)=(I_m^*\varphi,w).$$
Thus $u=r_MI_m^*\tilde{\varphi}=I_m^*\varphi$. (The choice of $\varphi$ is not unique.)

(2)$\Rightarrow$(3): Given $u\in L^2(S^m_{sol}(T^*M))$, by the assumption, there is $\varphi\in H^{-1}(\p_+SM)$ such that $u=I_m^*\varphi$. Since $I_m^*=L_m\circ I^*$, we define $f=I^* \varphi$, then $f\in H^{-1}(SM)$ and $u=L_mf$. Furthermore, given $h\in H^2_0(SM)$, 
$$(Xf, h)=(f,-Xh)=(I^*\varphi,-Xh)=(\varphi, -I(Xh))=0,$$
i.e. $Xf=0$.

(3)$\Rightarrow$(1): Assume $I_mu=0$ for some $u\in C^{\infty}(S^m_{sol}(T^*M))$, then it is well-known that there exists $h\in C^{\infty}(SM)$ with $h|_{\p SM}=0$ such that
$$Xh=-\ell_m u.$$
Moreover, by \cite[Lemma 2.3]{Sh3} there exists $p\in C^{\infty}(S^{m-1}(T^*M))$ with $p|_{\p M}=0$ such that $u|_{\p M}=dp|_{\p M}$. When $m=0$, this just means $u|_{\p M}=0$. Calculations in local coordinates show that $X(\ell_{m-1}p)=\ell_m dp$, thus we obtain
$$X(h+\ell_{m-1}p)=-\ell_m(u-dp),$$
with $(h+\ell_{m-1}p)|_{\p SM}=0$. 

Under the projection $\pi: SM\to M$, the pullback of the unit normal vector $\nu$ to $\p M$ is the unit normal vector $\mu$ to $\p SM$, and in local coordinates $X=\xi^i\frac{\p}{\p x^i}-\Gamma^i_{jk}\xi^j\xi^k\frac{\p}{\p\xi^i}$, where $\Gamma^i_{jk}$ are the Christoffel symbols. By taking the boundary normal coordinates $(x',x^n)$ near $x\in\p M$ (so $\nu(x)=\mu(x,\xi)=\frac{\p}{\p x^n}$), together with the fact that $(h+\ell_{m-1}p)|_{\p SM}=0$, we obtain that for $(x,\xi)\in \p SM$
$$0=-\ell_m(u-dp)(x,\xi)=X(h+\ell_{m-1}p)(x,\xi)=\xi^n\p_{x^n}(h+\ell_{m-1}p)(x,\xi).$$ 
The first equality comes from the fact $u-dp|_{\p M}=0$. Thus $\p_{\mu}(h+\ell_{m-1}p)(x,\xi)=0,\, \forall \xi\notin S_x\p M$. But since $h$ and $p$ are smooth, and the measure of $S_x\p M$ is zero on $S_xM$, we get $\p_{\mu}(h+\ell_{m-1}p)(x,\xi)=0$ for all $\xi\in S_xM$, so $h+\ell_{m-1}p\in H^2_0(SM)$.  

On the other hand, there exists $f\in H^{-1}(SM)$ with $Xf=0$, such that $u=L_mf$, it follows
$$0=(Xf, h+\ell_{m-1}p)=(f,-X(h+\ell_{m-1}p))=(f,\ell_m(u-dp))=(L_mf,u-dp)=\|u\|^2,$$
the last equality comes from the fact that $u$ is orthogonal to $dp$. Thus $u=0$, which implies the s-injectivity.
\end{proof} 

\begin{Remark}{\rm By carrying out an argument similar to the one of \cite[Lemma 4.1]{SU1}, one can actually show that there exists $p\in C^{\infty}(S^{m-1}(T^*M))$ with $p|_{\p M}=0$ such that $\p^k_{\nu}u|_{\p M}=\p^k_{\nu}dp|_{\p M}$ for all integers $k\geq 0$. When $m=0$, this means the boundary jet of $u$ is zero, i.e. $\p_{\nu}^ku|_{\p M}=0$ for all $k\geq 0$. Note that \cite{SU1} only considers the case that $u$ is a symmetric $2$-tensor fields, but the proof works for tensors of any rank. On the other hand, given $\p^k_{\nu}u|_{\p M}=\p^k_{\nu}dp|_{\p M}$, one should be able to prove that $h+\ell_{m-1}p\in H^{k+2}_0(SM)$ for all $k\geq 0$, i.e. $h+\ell_{m-1}p$ also has zero boundary jet. However, for our purposes $k=0$ is enough.}
\end{Remark}

The thing left to prove is the inequality \eqref{equivalent norm 1}. Actually the $H^k$ norms of $I_mw$ and $I_m\tilde w$ are equivalent for arbitrary $k\geq 0$, provided that $w$ is in $H^k_0(S^m(T^*M))$. A simple calculation shows that $\|I_m\tilde{w}\|_{L^2}^2=(\tilde{w}, I_m^*I_m\tilde{w})=(w, r_MI_m^*I_m\tilde{w})=(w,I_m^*I_mw)=\|I_mw\|_{L^2}^2$. We assume that $\p\tM$ and $\p M$ are sufficiently close.

\begin{Lemma}
Let $M$ be a compact non-trapping manifold with strictly convex boundary. Given $w\in H^k_0(S^m(T^*M))$, $k\geq 1$, let $\tilde{w}\in H^k_0(S^m(T^*\tM))$ be the extension of $w$ to $\tM$ by zero, then there exists $C>1$ such that
\begin{equation}\label{equivalent norm 2} 
\frac{1}{C}\|I_mw\|_{H^k(\p_+SM)}\leq\|I_m\tilde{w}\|_{H^k(\p_+S\tM)}\leq C\|I_mw\|_{H^k(\p_+SM)}.
\end{equation}
\end{Lemma}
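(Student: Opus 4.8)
The plan is to reduce \eqref{equivalent norm 2} to a change of variables under a smooth diffeomorphism, using crucially that $\tilde w$ vanishes outside $M$, so that $I_m\tilde w$ only records the passage of a geodesic of $\tM$ through $M$.

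First I would record a pointwise identity. For $(x,\xi)\in\p_+S\tM$ whose forward geodesic meets $M$, let $t_1=t_1(x,\xi)$ be the first time at which $\gamma_{x,\xi}$ enters $M$, and set $\Phi(x,\xi):=(\gamma_{x,\xi}(t_1),\dot\gamma_{x,\xi}(t_1))\in\p_+SM$, the inflow point. Since $\p M$ is strictly convex and $\p\tM$ is close to $\p M$, the geodesic meets $\overline M$ in a single interval $[t_1,t_2]$ with $t_2-t_1=\tau(\Phi(x,\xi))$, and it exits $\tM$ before it could re-enter $M$. As $\tilde w\equiv 0$ off $M$ and the metrics agree on $\overline M$, the substitution $s=t-t_1$ yields $I_m\tilde w(x,\xi)=\int_{t_1}^{t_2}w(\gamma_{x,\xi}(t),\dot\gamma_{x,\xi}(t))\,dt=I_mw(\Phi(x,\xi))$; when the forward geodesic of $(x,\xi)$ misses $M$ one simply has $I_m\tilde w(x,\xi)=0$. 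Hence $I_m\tilde w=(I_mw)\circ\Phi$ on the support of $I_m\tilde w$.

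It suffices to prove \eqref{equivalent norm 2} for $w\in C^\infty_c((S^m(T^*M))^{int})$ and then pass to $H^k_0$ by density: such $w$ are dense in $H^k_0(S^m(T^*M))$, and both $w\mapsto I_mw$ and $w\mapsto I_m\tilde w$ are bounded into the respective $H^k$ spaces by the mapping properties recalled in Section 2 (together with the continuity of extension by zero $H^k_0(S^m(T^*M))\to H^k_0(S^m(T^*\tM))$), so the inequalities survive the limit. For such $w$ the extension $\tilde w$ is smooth and compactly supported in $(S\tM)^{int}$, while $I_mw\in C^\infty_c((\p_+SM)^{int})$, so $\mathrm{supp}\,I_mw$ is a compact subset of $(\p_+SM)^{int}$, in particular disjoint from the glancing set $S(\p M)$. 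The core geometric step is then to check that, away from glancing, $\Phi$ is a smooth diffeomorphism with uniformly bounded derivatives onto its image, whose inverse $\Psi:\p_+SM\to\p_+S\tM$ is obtained by flowing backward until exiting $\tM$; smoothness and the bounds follow from the implicit function theorem applied to boundary defining functions of $\p M$ and $\p\tM$, strict convexity guaranteeing transversal (hence smooth) hitting times off glancing, and the closeness of the two boundaries guaranteeing that $\Phi$ and $\Psi$ are genuine inverse diffeomorphisms between neighborhoods of $\mathrm{supp}\,I_m\tilde w$ and $\mathrm{supp}\,I_mw$. Granting this, the upper bound in \eqref{equivalent norm 2} follows from the standard composition estimate $\|h\circ F\|_{H^k}\le C\|h\|_{H^k}$ with $F=\Phi$ and $h=I_mw$, and the lower bound from the same estimate with $F=\Psi$ and $h=I_m\tilde w$, using $I_mw=(I_m\tilde w)\circ\Psi$; the interior compact support renders the choice of smooth reference measure on $\p_+SM$ and $\p_+S\tM$ immaterial.

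The main difficulty lies entirely at the glancing set: the inflow map $\Phi$ does not extend smoothly up to $S(\p M)$, since hitting times degenerate at rays tangent to $\p M$, so a direct change of variables on all of $\p_+S\tM$ would break down. This is precisely why one works with $w\in H^k_0$, so that $I_mw$ (and hence $I_m\tilde w$) vanishes near the glancing set and the argument may be localized to a region where $\Phi$ is an honest diffeomorphism.
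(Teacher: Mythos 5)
Your proposal is correct and follows essentially the same route as the paper: your inflow map $\Phi$ is precisely the inverse of the paper's geodesic-extension diffeomorphism $T:\p_+SM\to\p_+S\tM$, the pointwise identity $I_m\tilde w=(I_mw)\circ\Phi$ (with $I_m\tilde w=0$ off the image) is the paper's starting point, and the reduction to $w\in C^\infty_c((S^m(T^*M))^{int})$ by density plus a localized change-of-variables/composition estimate in charts is exactly the paper's argument. Your explicit discussion of the degeneracy at the glancing set $S(\p M)$, and of why interior compact support of $I_mw$ neutralizes it, makes precise a point the paper treats only implicitly, but it does not change the substance of the proof.
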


\begin{proof}
We only need to show \eqref{equivalent norm 1}, which is half of \eqref{equivalent norm 2}. Since $\p M$ and $\p\tM$ are close, we can assume the closure of $\tM$ is still compact non-trapping with strictly convex boundary. Given a geodesic $\gamma_{x,\xi}$ on $M$ determined by $(x,\xi)\in \p_+SM$, we can uniquely extend it to a geodesic $\gamma_{y,\eta}$ on $\tM$ determined by $(y,\eta)\in \p_+S\tM$. It is not difficult to see that the map
$$T: \p_+SM\to \p_+S\tM,\, \mbox{with}\, \,T(x,\xi)=(y,\eta)$$
is a diffeomorphism from $\p_+SM$ onto its image $T(\p_+SM)$. On the other hand, by the definition of $\tilde{w}$, $I_mw(x,\xi)=I_m\tilde{w}(T(x,\xi))=I_m\tilde{w}(y,\eta)$ and $I_m\tilde{w}(y,\eta)=0$ for $(y,\eta)\in \p_+S\tM\backslash T(\p_+SM)$.  

Since $\p_+SM$ and $\p_+S\tM$ are compact, similar to the proof of Lemma \ref{L} and \ref{phi} we will work in local charts. Let $U$ be a domain in $\p_+S\tM$ with local coordinates $(\tilde z^1,\cdots,\tilde z^{2n-2})$ and $\varphi$ be a smooth function on $\p_+S\tM$ with supp $\varphi\subset U$. In the mean time, there is a domain $V$ in $\p_+SM$ with local coordinates $(z^1,\cdots,z^{2n-2})$ such that $T^{-1}(U\cap T(\p_+SM))\subset V$, and $\psi$ is a smooth function on $\p_+SM$ with $T^{-1}(U\cap T(\p_+SM))\subset$ supp $\psi\subset V$, $\psi\equiv 1$ on $T^{-1}(U\cap T(\p_+SM))$. We first consider the case $w\in C^{\infty}_c(S^m(T^*M)^{int})$ and show that there exists $C>0$ such that
$$\|\varphi\cdot I_m\tilde w\|_{H^k(U)}\leq C\|\psi\cdot I_mw\|_{H^k(V)}.$$

Notice that for $|\alpha|\leq k$ 
$$D_{\tilde z}^{\alpha}[\varphi\cdot I_m\tilde w]=\sum_{\beta+\gamma=\alpha}D_{\tilde z}^{\gamma}\varphi\cdot D_{\tilde z}^{\beta} I_m\tilde w,$$
thus
\begin{equation*}
\begin{split}
\|D_{\tilde z}^{\alpha}[\varphi\cdot I_m\tilde w]\|^2_{L^2(U)} &\leq\sum_{\beta\leq\alpha}C_{\beta,\alpha}\,\int_U |D_{\tilde z}^{\beta} I_m\tilde w|^2\,d\tilde z\\
& =\sum_{\beta\leq\alpha}C_{\beta,\alpha}\,\int_{U\cap T(\p_+SM)} |D_{\tilde z}^{\beta} I_m\tilde w(\tilde z)|^2\,d\tilde z\\
&\leq \sum_{|\sigma|\leq|\alpha|}C_{\sigma,\alpha}\,\int_{T^{-1}(U\cap T(\p_+SM))} |D_{z}^{\sigma} I_m\tilde w(T(z))|^2J\,dz\\
&\leq C'\sum_{|\sigma|\leq |\alpha|}\int_{T^{-1}(U\cap T(\p_+SM))} |D_{z}^{\sigma} (\psi\cdot I_m w)(z)|^2\,dz\\
&\leq C'\sum_{|\sigma|\leq |\alpha|}\int_{V} |D_{z}^{\sigma} (\psi\cdot I_m w)(z)|^2\,dz\leq C\|\psi\cdot I_mw\|^2_{H^k(V)},
\end{split}
\end{equation*}
here $J$ is the Jacobian related to the diffeomorphism $T$. Therefore $\|I_m\tilde{w}\|_{H^k(\p_+S\tM)}\leq C\|I_mw\|_{H^k(\p_+SM)}$ for $w\in C^{\infty}_{c}(S^m(T^*M)^{int})$.

Now for $w\in H^k_0(S^m(T^*M))$, there is a sequence $w_k\in C^{\infty}_c(S^m(T^*M)^{int})\, k=1,2,\cdots$, which converges to $w$ in the $H^k$-norm. Then it is not difficult to see that the sequence $\tilde w_k\in C^{\infty}_c(S^m(T^*\tM))$ converges to $\tilde w\in H^k_0(S^m(T^*\tM))$. By the boundedness of the operator $I_m$, $I_mw_k$ and $I_m\tilde w_k$ converge to $I_mw$ and $I_m\tilde w$ respectively in $H^k$-norm. This implies that above estimates are valid for any $w\in H^k_0(S^m(T^*M))$.  
\end{proof}


The following proposition that holds on compact non-trapping manifolds with strictly convex boundary shows that items
(4) and (5) in Theorem \ref{thm:main} are equivalent and any of them implies item (1). 
\begin{Proposition}
Let $M$ be a compact non-trapping Riemannian manifold with strictly convex boundary and let $u\in C^{\infty}(S^m_{sol}(T^*M))$. The following are equivalent:\\
(i) there exists $\varphi\in C^{\infty}_{\alpha}(\p_+SM)$ such that $u=I_{m}^*\varphi$;\\
(ii) there exists $f\in C^{\infty}(SM)$ satisfying $Xf=0$ and 
$u=L_{m}f.$

Any of these two conditions implies s-injectivity of $I_m$.
\end{Proposition}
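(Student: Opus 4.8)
The plan is to dispatch the equivalence (i) $\Leftrightarrow$ (ii) using the factorization $I_m^*=L_m\circ I^*$ together with the identity $I^*\varphi=\varphi^\sharp$ and the fact that $\varphi^\sharp$ is constant along the orbits of the geodesic flow, so that $X\varphi^\sharp=0$. For (i) $\Rightarrow$ (ii) I would set $f:=I^*\varphi=\varphi^\sharp$: by the definition of $C^\infty_\alpha(\partial_+SM)$ this $f$ is smooth, it satisfies $Xf=0$, and $L_mf=L_mI^*\varphi=I_m^*\varphi=u$. For the converse I would take $\varphi:=f|_{\partial_+SM}$; since $M$ is non-trapping with strictly convex boundary, every $(x,\xi)\in SM$ flows backward to a unique point of $\partial_+SM$, and the invariance $Xf=0$ then forces $f=\varphi^\sharp$. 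In particular $\varphi^\sharp=f$ is smooth, so $\varphi\in C^\infty_\alpha(\partial_+SM)$, and $I_m^*\varphi=L_m\varphi^\sharp=L_mf=u$. This half amounts to bookkeeping with $\varphi^\sharp$.

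The substantive claim is that either condition forces s-injectivity; by the equivalence just established it suffices to argue from (ii), reproducing the scheme of the implication (3) $\Rightarrow$ (1) in the proof of Theorem \ref{thm:main} but simplified by the available smoothness. Concretely, one applies the hypothesis to an arbitrary $u_0\in C^\infty(S^m_{sol}(T^*M))$ in the kernel of $I_m$ and shows $u_0=0$. First I would invoke the standard construction of a smooth primitive: since $I_mu_0=0$, there is $h\in C^\infty(SM)$ with $h|_{\partial SM}=0$ and $Xh=-\ell_mu_0$. Applying (ii) to $u_0$ yields $f\in C^\infty(SM)$ with $Xf=0$ and $L_mf=u_0$.

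Since $f$ and $h$ are smooth and $h$ vanishes on $\partial SM$, the geodesic vector field $X$ (which preserves $d\Sigma^{2n-1}$) may be integrated by parts with no boundary contribution, so that
\[
0=(Xf,h)=-(f,Xh)=(f,\ell_mu_0)=(L_mf,u_0)=\|u_0\|^2,
\]
whence $u_0=0$ and $I_m$ is s-injective. In contrast with the distributional argument of Theorem \ref{thm:main}, the smoothness of $f$ removes the need to pass to $H^2_0(SM)$ and to correct $h$ by a potential term $\ell_{m-1}p$: only $h|_{\partial SM}=0$ is needed to annihilate the boundary term. The step I expect to be the main obstacle is this regularity input, namely that $I_mu_0=0$ genuinely produces a primitive $h$ that is smooth up to $\partial SM$, the potential loss of regularity being confined to the glancing set $S(\partial M)$; this is where the non-trapping and strict convexity hypotheses enter, and although standard it is the delicate point.
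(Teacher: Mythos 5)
Your proposal is correct and takes essentially the same approach as the paper: the equivalence is handled by the identical bookkeeping with $\varphi^{\sharp}$ (setting $f=I^{*}\varphi$ one way, $\varphi=f|_{\partial_{+}SM}$ the other, using non-trapping plus invariance to get $f=\varphi^{\sharp}$), and s-injectivity by pairing the smooth invariant $f$ with the standard smooth primitive $h$ of $-\ell_{m}u$ vanishing on $\partial SM$, via the same integration by parts $0=(Xf,h)=(f,-Xh)=(L_{m}f,u)=\|u\|^{2}$. Your closing remark that smoothness of $f$ removes the need to correct $h$ by $\ell_{m-1}p$ and to land in $H^{2}_{0}(SM)$ is exactly the simplification the paper itself points out relative to the implication (3)$\Rightarrow$(1) of Theorem \ref{thm:main}.
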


\begin{proof}
(i)$\Rightarrow$(ii): By the assumption, there is $\varphi\in C^{\infty}_{\alpha}(\p_+SM)$ such that $u=I^*_m\varphi=L_m\circ I^*\varphi$. Define $f=I^*\varphi=\varphi^{\sharp}\in C^{\infty}(SM)$ (since $\varphi\in C^{\infty}_{\alpha}(\p_+SM)$), then $u=L_mf$. Moreover, it is clear that $Xf=X\varphi^{\sharp}=0$ by definition.

(ii)$\Rightarrow$(i): If there exists $f\in C^{\infty}(SM)$ with $Xf=0$, this implies that $f=I^*(f|_{\p_+SM})$. We define $\varphi=f|_{\p_+SM}\in C^{\infty}(\p_+SM)$. However, since $\varphi^{\sharp}=f\in C^{\infty}(SM)$, $\varphi$ actually sits in the space $C^{\infty}_{\alpha}(\p_+SM)$. By the assumption, $u=L_mf=L_m\circ I^*\varphi=I_m^*\varphi$.

The argument that shows that any of these conditions imply s-injectivity of $I_m$ is even easier than the proof
that (3) implies (1) in Theorem \ref{thm:main} since we do not have to worry about paring $Xf$ with an element in
$H_{0}^{2}(SM)$.  Assuming (ii), integration by parts yields right away:
$$0=(Xf, h)=(f,-Xh)=(f,\ell_m(u))=(L_mf,u)=\|u\|^2.$$

\end{proof}

Finally we show that in Theorem \ref{thm:main}, item (1) implies item (4):

Since $M$ is simple, given $u\in C^{\infty}(S^m_{sol}(T^*M))$, by Lemma \ref{smooth normal}, there exists $v\in C^{\infty}_c(S^m(T^*\tM))$ such that $r_MI_m^*\,I_mv=u$. Then it is a standard argument that if we define $\varphi=I^*(I_mv)|_{\p_+SM}$, then $I^*_m\varphi=u$. Moreover, since $I^*(I_mv)$ is smooth in the interior of $S\tM$, we have $\varphi\in C^{\infty}_{\alpha}(\p_+SM)$.

The proof of Theorem \ref{thm:main} is now complete.


\section{Alternative proof of Corollary \ref{thm:2D}}

Before giving the alternative proof, we will explain how the solenoidal condition of a tensor manifests itself at the level of the transport equation. It seems that this basic relation has not appeared
before in the literature, although we believe it was known to experts.

As we already pointed out in the introduction, by considering the vertical Laplacian $\Delta$ on each fibre $S_{x}M$ of $SM$ we have a natural $L^2$-decomposition $L^{2}(SM)=\oplus_{m\geq 0}H_m(SM)$
into vertical spherical harmonics. We set $\Omega_m:=H_{m}(SM)\cap C^{\infty}(SM)$. Then a function $u$ belongs to $\Omega_m$ if and only if
$\Delta u=m(m+n-2)u$ where $n=\dim M$. 
The maps
$$\ell_m: C^{\infty}(S^m(T^*M))\to \bigoplus_{k=0}^{[m/2]}\Omega_{m-2k},$$ 
and
$$L_{m}:\bigoplus_{k=0}^{[m/2]}\Omega_{m-2k}\to C^{\infty}(S^m(T^*M))$$
are isomorphisms.
These maps give natural identification between functions in $\Omega_m$ and {\it trace-free} symmetric $m$-tensors (for details on this see \cite{GK2,DS,PSU5}). The geodesic vector field $X$ maps $\Omega_m$ to
$\Omega_{m-1}\oplus\Omega_{m+1}$ and hence we can split it as $X=X_{+}+X_{-}$, where $X_{\pm}:\Omega_{m}\to \Omega_{m\pm 1}$
and $X_{+}^*=-X_{-}$.  Note that
\[X\ell_{m-1}=\ell_{m}d.\]

Given $f\in \bigoplus_{k=0}^{[m/2]}\Omega_{m-2k}$, in general 
 $Xf\in \bigoplus_{k=0}^{[(m+1)/2]}\Omega_{m+1-2k}$. The next simple lemma characterizes the solenoidal condition in terms of $Xf$.
 
 \begin{Lemma} $Xf\in \Omega_{m+1}$, if and only if $L_{m}f$ is a solenoidal tensor.
 \label{lemma:aux0}
 \end{Lemma}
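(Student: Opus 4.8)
The plan is to reduce the biconditional to a single clean operator identity obtained by dualizing the relation $X\ell_{m-1}=\ell_m d$ quoted just above the statement. Taking $L^2$-adjoints of both sides of this identity, and using that $\ell_j^*=L_j$, that the formal adjoint of $X$ is $-X$ (since $X$ preserves $d\Sigma^{2n-1}$), and that the formal adjoint of $d$ is $-\delta$, I would obtain
\[
\delta L_m = L_{m-1}X.
\]
Thus $L_mf$ is solenoidal, i.e. $\delta(L_mf)=0$, if and only if $L_{m-1}(Xf)=0$, which converts the problem into a statement about the fibrewise moment operator $L_{m-1}$ applied to $Xf$.

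Next I would analyze $L_{m-1}$ on the spherical-harmonic decomposition. Since $f\in\bigoplus_{k=0}^{[m/2]}\Omega_{m-2k}$ and $X=X_++X_-$ with $X_\pm:\Omega_\ell\to\Omega_{\ell\pm1}$, the function $Xf$ lives in $\bigoplus_{k=0}^{[(m+1)/2]}\Omega_{m+1-2k}$, with top component the $\Omega_{m+1}$-part $X_+f_m$. The key observation is that $L_{m-1}$ annihilates $\Omega_{m+1}$: for $h\in\Omega_j$ with $j>m-1$ the integrand $h(x,\xi)\xi^{i_1}\cdots\xi^{i_{m-1}}$ integrates to zero over $S_xM$, because $\xi\mapsto\xi^{i_1}\cdots\xi^{i_{m-1}}$ is a polynomial of degree $m-1$ on the fibre sphere and hence orthogonal to the degree-$j$ spherical harmonic $h$. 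On the complementary space $\bigoplus_{k=0}^{[(m-1)/2]}\Omega_{m-1-2k}$, however, $L_{m-1}$ is the isomorphism quoted in the excerpt, in particular injective.

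Combining these, $L_{m-1}(Xf)=0$ forces precisely the lower components of $Xf$ — those in degrees $m-1,m-3,\dots$ — to vanish, while leaving the $\Omega_{m+1}$-component unconstrained; equivalently $Xf\in\Omega_{m+1}$. This yields both implications at once. The main technical point to be careful about is the justification of the adjoint identity $\delta L_m=L_{m-1}X$ on a manifold with boundary: the integration by parts producing $X^*=-X$ and $d^*=-\delta$ generates boundary terms, so I would establish the identity by testing against $v\in C_c^\infty(S^{m-1}(T^*M)^{int})$ (whose lift $\ell_{m-1}v$ is supported away from $\partial SM$, killing the boundary contributions) to conclude $\delta(L_mf)=L_{m-1}(Xf)$ in the interior, and then extend to all of $M$ by smoothness. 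The remaining ingredient, the vanishing and injectivity properties of $L_{m-1}$, is immediate from the harmonic-decomposition results of \cite{GK2,DS,PSU5} already invoked in the text.
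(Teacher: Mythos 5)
Your proof is correct, and it succeeds by what is essentially the transpose of the paper's argument: both proofs turn on dualizing the commutation identity $X\ell_{m-1}=\ell_{m}d$ together with the identification of $\bigoplus_{k=0}^{[(m-1)/2]}\Omega_{m-1-2k}$ with symmetric $(m-1)$-tensors. The paper stays on the weak side throughout: it characterizes solenoidality by $(L_{m}f,dh)=0$ for all $h\in C^{\infty}(S^{m-1}(T^*M))$ with $h|_{\p M}=0$, computes
\[
(L_{m}f,dh)=(f,\ell_{m}dh)=(f,X\ell_{m-1}h)=-(Xf,\ell_{m-1}h),
\]
where the boundary term in the integration by parts is killed by the condition $h|_{\p M}=0$, and concludes from the fact that $\ell_{m-1}h$ ranges over $\bigoplus_{k=0}^{[(m-1)/2]}\Omega_{m-1-2k}$ that the pairing vanishes for all such $h$ exactly when $(Xf)_{m-1-2k}=0$ for all $k$, i.e.\ $Xf\in\Omega_{m+1}$. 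You instead crystallize the same computation into the pointwise intertwining identity $\delta L_{m}=L_{m-1}X$, proved by testing against interior-supported tensors and extending by continuity (legitimate, since both sides are smooth on all of $M$), and then finish with the adjoint statement of the paper's range fact: $L_{m-1}$ annihilates $\Omega_{j}$ for $j>m-1$ and is injective on $\bigoplus_{k=0}^{[(m-1)/2]}\Omega_{m-1-2k}$; by parity the part of $Xf$ below $\Omega_{m+1}$ lies exactly in that space, so $\delta L_{m}f=0$ iff $L_{m-1}Xf=0$ iff $Xf\in\Omega_{m+1}$. The two mechanisms are dual to one another --- the paper uses density of the range of $\ell_{m-1}$ over boundary-vanishing $h$, you use injectivity of $L_{m-1}$ --- so neither needs an ingredient the other lacks. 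What your route buys is the identity $\delta L_{m}=L_{m-1}X$ itself, a clean reusable statement that makes the equivalence transparent and decouples it from the boundary; what the paper's route buys is brevity, since choosing $h|_{\p M}=0$ simultaneously encodes the solenoidal condition (orthogonality to the potential part) and disposes of boundary terms without any interior-testing-plus-continuity step.
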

 
 \begin{proof} Note that $L_{m}f$ is solenoidal if and only if $(L_{m}f,dh)=0$ for any $h\in C^{\infty}(S^{m-1}(T^*M))$ with
 $h|_{\partial M}=0$. But 
\[ (L_{m}f,dh)=(f,\ell_{m}dh)=(f,X\ell_{m-1}h)=-(Xf,\ell_{m-1}h)\]
and the last term is zero if and only if $(Xf)_{m-2k-1}=0$ for $0\leq k\leq [(m-1)/2]$ since
$\ell_{m-1}h\in \bigoplus_{k=0}^{[(m-1)/2]}\Omega_{m-1-2k}$.
\end{proof}

Another way to look at the condition $Xf\in\Omega_{m+1}$ is that the following equations should hold:
\[X_{-}f_{m-2k}+X_{+}f_{m-2k-2}=0,\;\;\;\;\text{for}\;0\leq k\leq [(m-1)/2].\]

\begin{Lemma} The following are equivalent:
\begin{enumerate}

\item Given a non-negative integer $m$ and $a_m\in \Omega_m$ with $X_{-}a_{m}=0$, there exists
$w\in C^{\infty}(SM)$ such that $Xw=0$ and $w_{m}=a_{m}$.
\item Given a non-negative integer $m$ and $f=\sum_{k=0}^{m}f_{k}$ such that $Xf\in\Omega_{m}\oplus\Omega_{m+1}$, there exists $w\in C^{\infty}(SM)$ such that $Xw=0$ and $\sum_{k=0}^{m}w_{k}=f$.

\end{enumerate}
\label{lemma:aux1}
\end{Lemma}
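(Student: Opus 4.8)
The direction (2)$\Rightarrow$(1) is immediate, so all the content lies in the reverse implication. Given $a_{m}\in\Omega_{m}$ with $X_{-}a_{m}=0$, I would simply apply (2) to $f=a_{m}$: since $Xf=X_{+}a_{m}+X_{-}a_{m}=X_{+}a_{m}\in\Omega_{m+1}\subset\Omega_{m}\oplus\Omega_{m+1}$, the hypothesis of (2) holds, and the resulting $w$ satisfies $Xw=0$ with $\sum_{k=0}^{m}w_{k}=a_{m}$. By uniqueness of the spherical harmonic decomposition this forces $w_{k}=0$ for $k<m$ and $w_{m}=a_{m}$, which is exactly (1). The real work is (1)$\Rightarrow$(2), which I would carry out by an iterative extension assembled from repeated applications of (1).

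\textbf{Step 1 (normalizing (1)).} I would first upgrade (1) to the cleaner statement: for $a_{m}\in\Omega_{m}$ with $X_{-}a_{m}=0$ there is an invariant $w$ with $w_{m}=a_{m}$ whose nonzero components all have the parity of $m$ and vanish in degrees $<m$. Two observations give this. First, because $X$ maps $\Omega_{k}$ into $\Omega_{k-1}\oplus\Omega_{k+1}$, the even and odd parts of any invariant function are separately invariant, so one may keep only the part of $w$ with the parity of $m$. Second, the components of $w$ in degrees $<m$ assemble into an invariant function $v$; the only nontrivial check is at the top degree, where $(Xw)_{m-1}=X_{+}w_{m-2}+X_{-}a_{m}=0$ together with $X_{-}a_{m}=0$ yields $X_{+}w_{m-2}=0=(Xv)_{m-1}$ — and this is precisely where the hypothesis $X_{-}a_{m}=0$ of (1) is used. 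Then $w-v$ is invariant, agrees with $w$ in degree $m$, and is supported in degrees $\ge m$.

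\textbf{Step 2 (the extension).} By linearity I would treat the even and odd parts of $f$ separately, so fix one parity chain $f=\sum_{k}f_{k}$; the hypothesis $Xf\in\Omega_{m}\oplus\Omega_{m+1}$ reduces to the relations $X_{+}f_{k}+X_{-}f_{k+2}=0$ for all admissible $k\le m-2$. I would peel $f$ from the bottom: matching the lowest component with the normalized form of (1) produces an invariant piece supported above that degree, and, having built invariant pieces $w^{(2i)}$ reproducing $f$ through degree $2j-2$, I set $r_{2j}:=f_{2j}-\sum_{i<j}w^{(2i)}_{2j}$, apply (1) to $r_{2j}$, and obtain the next piece $w^{(2j)}$ supported in degrees $\ge 2j$. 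Summing the pieces gives the desired $w$: the support normalization ensures $w^{(2j)}$ does not disturb the already-matched lower degrees, and a telescoping computation gives $w_{2\ell}=\sum_{j\le\ell}w^{(2j)}_{2\ell}=f_{2\ell}$ for every $\ell$.

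The crux — and the step I expect to be the only genuine obstacle — is the claim $X_{-}r_{2j}=0$, which is exactly what licenses the application of (1) at each stage. It follows by combining the two families of relations: invariance $(Xw^{(2i)})_{2j-1}=0$ gives $X_{-}w^{(2i)}_{2j}=-X_{+}w^{(2i)}_{2j-2}$, while the hypothesis gives $X_{-}f_{2j}=-X_{+}f_{2j-2}$, so that $X_{-}r_{2j}=-X_{+}\bigl(f_{2j-2}-\sum_{i<j}w^{(2i)}_{2j-2}\bigr)$; the bracket vanishes because the pieces constructed so far already reproduce $f$ in degree $2j-2$. This closes the induction, and the odd-parity chain is handled identically, completing the proof.
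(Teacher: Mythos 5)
Your proof is correct and takes essentially the same route as the paper: both directions agree, and your bottom-up, parity-split iteration is just the paper's induction on the top degree $m$ unrolled, driven by the identical cancellation between the invariance relation $X_{-}w_{k+1}=-X_{+}w_{k-1}$ and the hypothesis $X_{+}f_{k-1}+X_{-}f_{k+1}=0$. Your Step 1 is a worthwhile explicit addition: the paper silently uses this normalization when it writes the function produced by (1) as $w'=\sum_{m+1}^{\infty}w'_{k}$, i.e.\ supported in degrees $\geq m+1$, and your argument (subtract the lower-degree part, which is itself invariant precisely because $X_{-}a_{m}=0$) is exactly what justifies that step.
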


\begin{proof} The fact that (2) implies (1) is quite obvious from the fact that $a_{m}\in\Omega_{m}$ with $X_{-}a_{m}=0$ implies $Xa_{m}=X_{+}a_{m}\in\Omega_{m+1}$.

To prove that (1) implies (2) we proceed by induction in $m$. The case $m=0$ follows right away since $Xf_{0}\in\Omega_{1}$ and $X_{-}f_{0}=0$.

Suppose the claim holds for $m$ and let $f=\sum_{k=0}^{m+1}f_k$ be given with
$Xf\in\Omega_{m+1}\oplus\Omega_{m+2}$. This is equivalent to saying that
$X(\sum_{k=0}^{m}f_{k})\in \Omega_{m}\oplus\Omega_{m+1}$ and
$X_{-}f_{m+1}+X_{+}f_{m-1}=0$.

By induction hypothesis, there exists $w\in C^{\infty}(SM)$ such that $Xw=0$ and 
$w_{k}=f_{k}$ for all $k\leq m$. The equation $Xw=0$ in degree $m$ is
\[X_{-}w_{m+1}+X_{+}f_{m-1}=0\]
and thus
\[X_{-}(f_{m+1}-w_{m+1})=0.\]
Using item (1) in the lemma, there exists $w'=\sum_{m+1}^{\infty}w'_{k}\in C^{\infty}(SM)$
such that $Xw'=0$ and $w'_{m+1}=f_{m+1}-w_{m+1}$.
Then $X(w+w')=0$ and $\sum_{k=0}^{m+1}(w+w')_{k}=f$ as desired.
\end{proof}

Finally we show:

\begin{Proposition} The following are equivalent:

\begin{enumerate}
\item  Given a non-negative integer $m$ and $u\in C^{\infty}(S^{m}_{sol}(T^*M))$, 
there exists $f\in C^{\infty}(SM)$ with $Xf=0$ such that $L_{m}f=u$.

\item Given a non-negative integer $m$ and $a_m\in \Omega_m$ with $X_{-}a_{m}=0$, there exists
$w\in C^{\infty}(SM)$ such that $Xw=0$ and $w_{m}=a_{m}$.

\end{enumerate}
\label{prop:anydim}
\end{Proposition}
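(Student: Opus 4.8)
The plan is to exploit two structural facts already in place: Lemma \ref{lemma:aux0}, which converts the solenoidal condition on $L_{m}f$ into the transport-type condition $Xf\in\Omega_{m+1}$, and a factorization of $L_{m}$ through vertical spherical harmonics. Concretely, the function $\xi\mapsto\xi^{j_{1}}\cdots\xi^{j_{m}}$ restricts on each fibre to a sum of spherical harmonics of degrees $m,m-2,\dots$, so by orthogonality only the components $f_{m},f_{m-2},\dots$ contribute to $L_{m}f$. Writing $P$ for the projection of $C^{\infty}(SM)$ onto $\bigoplus_{k=0}^{[m/2]}\Omega_{m-2k}$, this says $L_{m}f=L_{m}(Pf)$, and on $\bigoplus_{k=0}^{[m/2]}\Omega_{m-2k}$ the operator $L_{m}$ is an isomorphism. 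These two observations are what let one move freely between tensors and their transport equation.

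For the direction (1)$\Rightarrow$(2), I would take $a_{m}\in\Omega_{m}$ with $X_{-}a_{m}=0$, regard it as an element of $\bigoplus_{k=0}^{[m/2]}\Omega_{m-2k}$ with vanishing lower components, and set $v:=L_{m}a_{m}$. Since $Xa_{m}=X_{+}a_{m}\in\Omega_{m+1}$, Lemma \ref{lemma:aux0} shows $v$ is solenoidal, so (1) furnishes $f\in C^{\infty}(SM)$ with $Xf=0$ and $L_{m}f=v=L_{m}a_{m}$. Because $L_{m}f=L_{m}(Pf)$ and $L_{m}$ is injective on $\bigoplus_{k=0}^{[m/2]}\Omega_{m-2k}$, I conclude $Pf=a_{m}$, hence $f_{m}=a_{m}$, and $w:=f$ is the desired first integral.

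For (2)$\Rightarrow$(1), the key remark is that statement (2) is exactly item (1) of Lemma \ref{lemma:aux1}, so item (2) of that lemma is also available. Given $u\in C^{\infty}(S^{m}_{sol}(T^{*}M))$, I set $F:=L_{m}^{-1}u\in\bigoplus_{k=0}^{[m/2]}\Omega_{m-2k}$; the solenoidal hypothesis together with Lemma \ref{lemma:aux0} gives $XF\in\Omega_{m+1}\subseteq\Omega_{m}\oplus\Omega_{m+1}$. Writing $F=\sum_{k=0}^{m}F_{k}$ and applying item (2) of Lemma \ref{lemma:aux1} produces $w\in C^{\infty}(SM)$ with $Xw=0$ and $\sum_{k=0}^{m}w_{k}=F$. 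Since $L_{m}w$ depends only on the components $w_{m},w_{m-2},\dots$, which all agree with those of $F$, we get $L_{m}w=L_{m}F=u$, so $f:=w$ works.

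The hard part, or rather the only genuine subtlety, lies in (2)$\Rightarrow$(1): one cannot feed the top component $F_{m}$ of $F=L_{m}^{-1}u$ directly into item (1) of Lemma \ref{lemma:aux1}, because the relation $XF\in\Omega_{m+1}$ only yields $X_{-}F_{m}+X_{+}F_{m-2}=0$ rather than $X_{-}F_{m}=0$. This is precisely why the strengthened form in item (2) of Lemma \ref{lemma:aux1} is needed: the induction carried out there fills in all the lower-degree harmonic components simultaneously. Everything else is bookkeeping, namely matching spherical-harmonic components across the isomorphism $L_{m}$ and tracking parities, so I expect no further essential difficulty.
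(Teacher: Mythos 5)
Your proof is correct and follows essentially the same route as the paper's: Lemma \ref{lemma:aux0} converts the solenoidal condition into $Xf\in\Omega_{m+1}$, Lemma \ref{lemma:aux1} supplies the invariant extension with prescribed truncation, and the isomorphism property of $L_{m}$ on $\bigoplus_{k=0}^{[m/2]}\Omega_{m-2k}$ closes both directions. Your explicit factorization $L_{m}f=L_{m}(Pf)$ is just a spelled-out version of the paper's parenthetical remark that $L_{m}f_{k}=0$ for $k>m$, and your observation about why the strengthened item (2) of Lemma \ref{lemma:aux1} is needed (since one only gets $X_{-}F_{m}+X_{+}F_{m-2}=0$) is exactly the point of that lemma's induction.
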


\begin{proof} Assume (1) holds. Given $a_{m}\in\Omega_{m}$ with $X_{-}a_{m}=0$ we see using
Lemma \ref{lemma:aux0} that $L_{m}a_{m}$ is a solenoidal tensor. Hence there is $f$
such that $Xf=0$ and $f_m=L_{m}^{-1}L_{m}f=a_{m}$ (note that $L_mf_k=0$ for $k>m$). Thus (2) holds.

Conversely if (2) holds, then item (2) in Lemma \ref{lemma:aux1} holds. Thus there exists
$f\in C^{\infty}(SM)$ such that $Xf=0$ and $\sum_{k=0}^{[m/2]}f_{m-2k}=L_{m}^{-1}u$
and (1) holds.
\end{proof}

\subsection{Proof of Corollary \ref{thm:2D}} On account of Proposition \ref{prop:anydim} it suffices
to show that given $a_m\in \Omega_m$ with $X_{-}a_{m}=0$, there exists
$w\in C^{\infty}(SM)$ such that $Xw=0$ and $w_{m}=a_{m}$.  What makes this possible in dimension
two is \cite[Lemma 5.6]{PSU3} whose content we now explain.

If $(M,g)$ is an oriented Riemannian surface, there is a global orthonormal frame $\{ X, X_{\perp}, V \}$ of $SM$ equipped with the Sasaki metric, where $X$ is the geodesic vector field, $V$ is the vertical vector field and $X_{\perp} = [X, V]$. We define the Guillemin-Kazhdan operators \cite{GK}
$$
\eta_{\pm} = \frac{1}{2}(X \pm i X_{\perp}).
$$
If $x = (x_1, x_2)$ are oriented isothermal coordinates near some point of $M$, we obtain local coordinates $(x, \theta)$ on $SM$ where $\theta$ is the angle between $\xi$ and $\partial/\partial x_1$. In these coordinates $V = \partial/\partial \theta$ and $\eta_+$ and $\eta_-$ are $\partial$ and $\overline{\partial}$ type operators, see \cite[Appendix B]{PSU5}.

For any $m \in \mathbb{Z}$ we define 
$$
\Lambda_m = \{ u \in C^{\infty}(SM) \,;\, Vu = imu \}.
$$
In the $(x, \theta)$ coordinates elements of $\Lambda_m$ look locally like $h(x) e^{im\theta}$. Spherical harmonics may be further decomposed as 
\begin{gather*}
\Omega_0 = \Lambda_0, \\
\Omega_m = \Lambda_m \oplus \Lambda_{-m} \text{ for } m \geq 1.
\end{gather*}
Any $u \in C^{\infty}(SM)$ has a decomposition $u = \sum_{m=-\infty}^{\infty} u_m$ where $u_m \in \Lambda_m$. The geodesic vector field decomposes as 
$$
X = \eta_+ + \eta_-
$$
where $\eta_{\pm}: \Lambda_m \to \Lambda_{m \pm 1}$.
If $m \geq 1$, the action of $X_{\pm}$ on $\Omega_m$ is given by 
$$
X_{\pm}(e_m+e_{-m}) = \eta_{\pm} e_m + \eta_{\mp} e_{-m}, \quad e_j \in \Lambda_j,
$$
and for $m=0$ we have $X_+|_{\Omega_0} = \eta_+ + \eta_-$, $X_-|_{\Omega_0} =0$. 

With these preliminaries out of the way, \cite[Lemma 5.6]{PSU3} says that given $f\in\Lambda_m$ there is a smooth $w\in C^{\infty}(SM)$
with $Xw=0$ and $w_m=f$. For $m=0$ this gives the desired result right away.

Given $a_{m}\in\Omega_{m}$ with $X_{-}a_{m}=0$ and $m\geq 1$, we write $a_{m}=e_{m}+e_{-m}$ with $e_{j}\in \Lambda_{j}$. Then $\eta_{-}e_{m}+\eta_{+}e_{-m}=0$. Consider now smooth $p,q$ with $Xp=Xq=0$ and $p_{m}=e_{m}$ and $q_{-m}=e_{-m}$. Then
\[w=\sum_{-\infty}^{-m}q_{k}+\sum_{m}^{\infty}p_{k}\]
satisfies $Xw=0$ and $w_{m}=a_{m}$.
\qed


\end{document}